\numberwithin{equation}{theorem}
\renewcommand{\m}{\mathfrak{m}}
\newcommand{\eHK}{e_{\mathrm{HK}}}
\DeclareMathOperator{\frk}{frk}
\theoremstyle{theorem}
\renewcommand{\O}{\mathscr O}
\begin{document}
\title{Bertini theorems for $F$-signature and Hilbert--Kunz multiplicity}
\author{Javier Carvajal-Rojas}
\author{Karl Schwede}
\author{Kevin Tucker}
\address{\'Ecole Polytechnique F\'ed\'erale de Lausanne, SB MATH CAG, MA C3 615 (B\^atiment MA),
Station 8, CH-1015 Lausanne, Switzerland\newline\indent
Escuela de Matem\'atica\\ Universidad de Costa Rica\\ San Jos\'e 11501\\ Costa Rica}
\email{javier.carvajalrojas@epfl.ch}
\address{Department of Mathematics\\ University of Utah\\ Salt Lake City\\ UT 84112}
\email{schwede@math.utah.edu}
\address{Department of Mathematics\\ University of Illinois at Chicago\\Chicago\\  IL 60607}
\email{kftucker@uic.edu}

\thanks{The first named author was supported in part by the NSF FRG Grant DMS \#1265261/1501115 and NSF CAREER Grant DMS \#1252860/1501102 and by the ERC-STG \#804334.}
\thanks{The second named author was supported in part by the NSF FRG Grant DMS \#1265261/1501115, NSF CAREER Grant DMS \#1252860/1501102 and NSF Grant DMS \#1801849.}
\thanks{The third named author was supported in part by NSF Grants DMS \#1602070 and \#1707661 and a fellowship from the Sloan foundation.}

%  \subjclass[2010]{14F18, 13A35, 14B05}

\maketitle

\begin{abstract}
We show that Bertini theorems hold for $F$-signature and Hilbert--Kunz multiplicity.  %In particular if $X$ is quasi-projective and the $F$-signature of $X$ is greater than some $\lambda$ at all points of $X$, then for a general hyperplane section of $H \cap X \subseteq X$, the $F$-signature of $H \cap X$ is also at $\lambda$ at all of its points.
In particular, if $X \subseteq \bP^n$ is normal and quasi-projective with $F$-signature greater than $\lambda$ (respectively the Hilbert--Kunz multiplicity is less than $\lambda$) at all points $x \in X$, then for a general hyperplane $H \subseteq \bP^n$ the $F$-signature (respectively Hilbert--Kunz multiplicity) of $X \cap H$ is greater than $\lambda$ (respectively less than $\lambda$) at all points $x \in X \cap H$.
\end{abstract}

\section{Introduction}

A common tool for studying a quasi-projective algebraic variety $X \subseteq \bP^n_k$, $k = \overline{k}$, is to perform induction on dimension by intersecting with a general hyperplane $H$.  When doing this, we want the resulting intersection $X \cap H$ to have similar properties to the original variety $X$.  Bertini's theorem accomplishes exactly this: the classical result asserts that if $X$ is smooth then so is $X \cap H$ for a general choice of $H$ \cite[II, Theorem 8.18]{Hartshorne}, \cite{KleimanBertiniAndHisTheorems}. Many classes of singularities also satisfy this property.  For example, in characteristic zero, if $X$ is log terminal (respectively log canonical), then so is $X \cap H$ \cite[Lemma 5.17]{KollarMori}.  Even more generally the multiplier ideal of a divisor pair restricts to the multiplier ideal of the intersection
\[
\mJ(X, \Delta) |_{X \cap H} = \mJ(X \cap H, \Delta_{X \cap H}),
\]
see \cite[Example 9.5.9]{LazarsfeldPositivity2}.  In characteristic zero, Bertini theorems can be generalized to the case where $H$ is a general member of a base point free linear system.

In characteristic $p > 0$, the situation is more complicated.  It is essential that $H$ is a general member of a very ample linear system (or something close to that) if you expect Bertini-type results to hold.  Since strongly $F$-regular and $F$-pure singularities are analogous to log terminal and log canonical singularities respectively \cite{HaraWatanabeFRegFPure}, it is natural to expect that the corresponding Bertini-results hold.  In \cite{SchwedeZhangBertiniTheoremsForFSings}, this is exactly what was shown.
\begin{theorem*}[\cite{SchwedeZhangBertiniTheoremsForFSings}]
If $(X, \Delta)$ is a strongly $F$-regular (resp. sharply $F$-pure) pair such that $X \subseteq \bP^n_k$ is quasi-projective and $k = \overline{k}$ is of characteristic $p > 0$, then $(X \cap H, \Delta|_{X \cap H})$ is also strongly $F$-regular (resp. sharply $F$-pure) for a general choice of hyperplane $H \subseteq \bP^n_k$.
\end{theorem*}
\noindent
However, the corresponding result for test ideals is false:
\begin{theorem*}[\cite{BydlonCounterExamplesToBertiniTestIdeals}]
For any $p > 0$ and $n \geq 3$, there exists a $\bQ$-divisor $\Delta$ on $X = \bA^n_k$, where $k = \overline{k}$ is of characteristic $p > 0$, such that
\[
\tau(X, \Delta)|_{H} \neq \tau(X \cap H, \Delta|_{X \cap H})
\]
for a general hyperplane $H \subseteq \bA^n$.
\end{theorem*}
It is then natural to ask about other types of $F$-singularities in characteristic $p > 0$.  For example, the behavior of $F$-rational singularities under restriction to general hyperplanes is still unknown.  In this paper, we show that the above sort of Bertini-theorem holds for $F$-signature $s(\O_{X,x})$ and Hilbert--Kunz multiplicity $\eHK(\O_{X,x})$ in the following sense.
\begin{mainthm*}[\autoref{thm.Main}, \autoref{thm.MainHK}]
Suppose that $X \subseteq \bP^n_k$ is a normal quasi-projective variety, $k = \overline{k}$ is of characteristic $p > 0$, and $\Delta \geq 0$ is a $\bQ$-divisor.  Suppose that $\lambda \geq 0$ is a number such that the $F$-signature is bigger than $\lambda$,
\[
s(\O_{X,x}, \Delta) > \lambda,
\]
for all $x \in X$.  Then for a general hyperplane $H \subseteq \bP^n_k$,
\[
s(\O_{X \cap H, x}, \Delta|_{X \cap H}) > \lambda
\]
for all $x \in X \cap H$.

{%\color{blue}
Similarly suppose that $\lambda \geq 1$ is a number such that the Hilbert--Kunz multiplicity is less than $\lambda$,
\[
\eHK(\O_{X,x}) < \lambda,
\]
for all $x \in X$.  Then for a general hyperplane $H \subseteq \bP^n_k$,
\[
\eHK(\O_{X \cap H, x}) < \lambda
\]
for all $x \in X \cap H$.}
\end{mainthm*}
\noindent
We actually prove a slightly stronger result by weakening the hypothesis that $X \subseteq \bP^n_k$ and we also make statements about the locus $U$ where $s(\O_{X,x}, \Delta) > \lambda$ for all $x \in U$ or likewise with the locus where $\eHK(\O_{X,x}) < \lambda$.

Recall that $F$-signature measures how strongly $F$-regular a variety or pair is.  Explicitly, if $R$ is finite type over $k = \overline{k}$, then $R$ is regular if and only if $R^{1/p^e}$ is a locally free $R$-module by \cite{KunzCharacterizationsOfRegularLocalRings}.  The $F$-signature refines this. By definition, $s(R)$ is a number that indicates what percentage of $R^{1/p^e}$ is locally free asymptotically as $e$ goes to $\infty$.
Thus $1 \geq s(R) \geq 0$ and
\begin{itemize}
\item{} $s(R) = 1$ if and only if $R$ is regular \cite{HunekeLeuschkeTwoTheoremsAboutMaximal} (\cf \cite{YaoModulesWithFFRT}) and
\item{} $s(R) > 0$ if and only if $R$ is strongly $F$-regular \cite{AberbachLeuschke}.
\end{itemize}
The $F$-signature should be thought of some sort of local volume of the singularity.

{%\color{blue}
On the other hand, Hilbert--Kunz measures how close a ring is to being regular.  If $(R, \fram, k = k^p)$ is a local ring of dimension $d$, then $\eHK(R)$ is the asymptotic value of the ratio between the number of generators of $R^{1/p^e}$ as an $R$-module with the number of generators expected for a regular ring ($p^{ed}$).  One has that
$e_{\mathrm{HK}}(R) \geq 1$ and
\begin{itemize}
\item{} $\eHK(R) = 1$ if and only if $R$ is regular \cite[Theorem 1.5]{WatanabeYoshidaHKMultAndInequality}.
\end{itemize}
Hilbert--Kunz multiplicity is another sort of volume of a singularity.
}

We prove our main result by relying on the axiomatic Bertini framework as introduced in \cite{CuminoGrecoManaresiAxiomatic}.  In particular, to show the type of result in our Main Theorem, it suffices to show the following two properties for a property of singularities $\sP$ (such as $s(\O_{X,x}) > \lambda$):
\begin{itemize}
\item[(A1)]  If $\phi : Y \to Z$ is a flat morphism with regular fibers and $Z$ is $\sP$, then $Y$ is $\sP$ too.
\item[(A2)]  Let $\phi : Y \to S$ be a morphism of finite type where $Y$ is excellent and $S$ is integral with generic point $\eta$.  If $Y_{\eta}$ is geometrically $\sP$, then there exists an open neighborhood $U$ of $\eta$ in $S$ such that the fibers $Y_{s}$ are geometrically $\sP$ for each $s \in U$.  (In fact, it suffices to check this for $S = (\bP^n_k)^*$, the space of hyperplanes).
\end{itemize}
Property (A1) was already proven for $F$-signature in \cite{YaoObservationsAboutTheFSignature}. In \autoref{sec.YaoProofForPairs}, we generalize this result to the context of pairs and give a new proof in the classical non-pair setting.   In \autoref{sec.FsignatureOfGeneralFibers}, we show that property (A2) holds for $F$-signature.
\vskip 9pt
\noindent
{\it Acknowledgements:} The authors thank Patrick Graf and Yongwei Yao for stimulating discussions.  Work on this project was conducted in CIRM (Luminy) and Oberwolfach. They also thank the anonymous referee for very valuable comments and suggestions.

\section{Preliminaries}

\subsection{Hilbert--Kunz multiplicity and $F$-signature}
Throughout this article, we shall assume all schemes $X$ are Noetherian, separated, and have prime characteristic $p>0$.  If $x \in X$, we let $k(x)$ denote the residue field of the local ring $\O_{X,x}$.  We let $F^e \colon X \to X$ denote the $e$-iterated Frobenius endomorphism or $p^e$-th power map. We say $X$ is $F$-finite if $F^e$ is a finite morphism, in which case $X$ is automatically excellent and has a dualizing complex \cite{KunzOnNoetherianRingsOfCharP,Gabber.tStruc}.

When $X = \Spec(A)$ is affine, we often conflate scheme-theoretic and ring-theoretic notation.   In particular, $F^e \colon A \to A$  denotes the $e$-iterated Frobenius, and for an $A$-module $M$ we write $F^e_*M$ for $\Gamma\bigl(\Spec(A), F^e_*\widetilde{M}\bigr)$ where $\widetilde{M}$ is the associated quasi-coherent sheaf on $\Spec(A)$.   In other words, $F^e_*M$ is the $A$-module arising from $M$ via restriction of scalars for $F^e$.  In case $A$ is reduced, we also identify $F^e$ with the inclusion $A \subseteq A^{1/p^e}$, and  shall at times use $M^{1/p^e}$ to denote $F^e_*M$ accordingly.

If $J \subseteq A$ is an ideal, then the $e$-th Frobenius power of $J$ is  the expansion of $J$ under the $e$-iterated Frobenius and denoted $J^{[p^e]} = \langle F^e(J) \rangle = \bigl\langle j^{[p^e]} \mid j \in J \bigr\rangle$. It follows $J  (F^e_* M) = F^e_*\bigl(J^{[p^e]}  M\bigr)$ or $J \bigl(M^{1/p^e}\bigr) = \bigl(J^{[p^e]}M\bigr)^{1/p^e}$ for any $A$-module $M$. In the local setting, the Frobenius powers give rise to the following well-studied variant on the Hilbert--Samuel multiplicity.

\begin{definition}
 If $(A, \m)$ is a local ring of dimension $d$, the Hilbert--Kunz multiplicity of $A$ is
 \[
e_{\mathrm{HK}}(A) = \lim_{e \rightarrow \infty} \frac{1}{p^{ed}} \ell_A\bigl( A / \m^{[p^e]} \bigr),
\]
where we write $\ell_A( \blank)$ for the length of an $A$-module
\end{definition}

\begin{theorem}
Suppose $(A, \m)$ is a local ring of dimension $d$.
\begin{enumerate}
\item
\cite{MonskyHKFunction}
The limit defining the Hilbert--Kunz multiplicity $e_{\mathrm{HK}}(A)$ exists, and moreover
\[
\ell_A\bigl( A / \m^{[p^e]} \bigr) = e_{\mathrm{HK}}(A) \cdot p^{ed} + O\bigl(p^{e(d - 1)}\bigr).
\]
\item
\cite{WatanabeYoshidaHKMultAndInequality}
The Hilbert--Kunz multiplicity $e_{\mathrm{HK}}(A) \geq 1$, and if $A$ is equidimensional then $e_{\mathrm{HK}}(A) = 1$ if and only if $A$ is regular.
\end{enumerate}
\end{theorem}

The $F$-signature, like the Hilbert--Kunz multiplicity, is another important numerical invariant of a local ring in positive characteristic defined in terms of the iterates of Frobenius.  For any positive characteristic ring $A$,
recall that an $A$-module inclusion $M_1 \to M_2$ is said to be pure if $M_1 \otimes_A N \to M_2 \otimes_A N$ remains injective for any $A$-module $N$.  An inclusion $A \to M$, where $M$ is a finitely generated $A$-module, is pure if and only if it is split, \textit{i.e.} admits an $A$-module section; see \cite[Corollary 5.2]{HochsterRobertsFrobeniusLocalCohomology}.  If $(A,\m)$ is local, $A \to M$ is pure if and only if $E_A(k) \to M \otimes_A E_A(k)$ is injective, where $E_A(k)$ is an injective hull of the residue field $k = A/\m$; this follows from Matlis duality \cite[Lemma 2.1 (e)]{HochsterHunekeApplicationsExistenceBCMAlgebras}. We write $\ell_A( \blank)$ for the length of an $A$-module, omitting the subscript at times to simplify  notation.

\begin{definition}
If $(A,\m)$ is an excellent local ring of dimension $d$, the $e$-th Frobenius degeneracy ideal
\[
I_e(A) = \bigl\langle a \in A \mid A \xrightarrow{1 \mapsto F^e_* a}  F^e_*A \mbox{ is not a pure $A$-module inclusion} \bigr\rangle
\]
is an ideal of $A$, and the $F$-signature is
\[
s(A) = \lim_{e \rightarrow \infty} \frac{1}{p^{ed}} \ell_A\bigl( A / I_e(A) \bigr).
\]
\end{definition}

Recall the following results on $F$-signature.

\begin{theorem}
Suppose $(A, \m)$ is an excellent local ring of dimension $d$.
\begin{enumerate}
\item \cite{TuckerFSigExists}
The limit defining the $F$-signature $s(A)$ exists, and moreover
\[
\ell_A\bigl( A / I_e(A) \bigr) = s(A) \cdot p^{ed} + O\bigl(p^{e(d - 1)}\bigr).
\]
\item \cite{HunekeLeuschkeTwoTheoremsAboutMaximal}
The $F$-signature $s(A) \leq 1$, and $s(A) = 1$ if and only if $A$ is regular.
\item \cite{AberbachLeuschke, YaoModulesWithFFRT}
The $F$-signature $s(A) \geq 0$, and $s(A) > 0$ if and only if $A$ is strongly $F$-regular.  In this case, $A$ is necessarily a Cohen--Macaulay normal domain.
\end{enumerate}
\end{theorem}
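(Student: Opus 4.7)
The plan is to translate the Frobenius degeneracy ideal $I_e(A)$ into the language of Frobenius splittings. Concretely, an element $a \notin I_e(A)$ precisely when there exists an $A$-linear map $\phi \colon F^e_* A \to A$ with $\phi(F^e_* a) = 1$; equivalently, $F^e_* a$ generates a free $A$-summand of $F^e_* A$. Under mild hypotheses (perfect residue field, to which one can reduce by faithfully flat base change), $\ell_A(A/I_e(A))$ then equals the Frobenius splitting number $a_e(A)$, defined as the largest integer such that $F^e_* A \cong A^{\oplus a_e} \oplus M_e$ for some $A$-module $M_e$ admitting no nonzero free $A$-summand. Consequently the whole theorem becomes a statement about the asymptotic behavior of $a_e(A)/p^{ed}$.

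For part (1), I would follow Tucker's approach. After reducing to the $F$-finite, complete local domain case, one compares $a_e$ to Hilbert-Kunz-type lengths. Choosing an $\m$-primary ideal $I$ and a test element $c$, the decomposition $F^e_* A = A^{\oplus a_e} \oplus M_e$ yields a comparison between $\ell_A(F^e_* A / I F^e_* A)$ and $a_e \cdot \ell_A(A/I)$ whose defect is $\ell_A(M_e/I M_e)$; multiplication by $c$ bounds this defect by $O(p^{e(d-1)})$. The uniform convergence of the Hilbert-Kunz function to its multiplicity, with error $O(p^{e(d-1)})$, then yields both the existence of the limit defining $s(A)$ and the asymptotic expansion in (1).

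For part (2), the bound $s(A) \le 1$ follows from $a_e \le \rank_A(F^e_* A) = p^{ed}$ in the $F$-finite domain case. If $A$ is regular, Kunz's theorem gives that $F^e_* A$ is free of rank $p^{ed}$, whence $a_e = p^{ed}$ and $s(A) = 1$. Conversely, if $s(A) = 1$ then a length count in the spirit of Huneke-Leuschke shows that $M_e$ must vanish for all sufficiently large $e$, so $F^e_* A$ is free and $A$ is regular again by Kunz.

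For part (3), $s(A) \ge 0$ is trivial. If $s(A) > 0$ then $a_e > 0$ for all sufficiently large $e$, and for any nonzero $c \in A$ one finds $e$ and a splitting $\phi \colon F^e_* A \to A$ with $\phi(F^e_* c) = 1$, which is precisely strong $F$-regularity. The main obstacle, and the technical heart of the theorem, is the Aberbach-Leuschke converse: assuming $A$ is strongly $F$-regular, one must produce a positive lower bound on $a_e/p^{ed}$. Their strategy uses a splitting $\phi_0 \colon F^{e_0}_* A \to A$ sending $F^{e_0}_* c$ to $1$ for a chosen test element $c$, and then iterates $\phi_0$ against higher Frobenii, invoking the convergence estimate of part (1) to propagate a uniform positive density of free summands to all large $e$. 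Finally, that a strongly $F$-regular ring is automatically a Cohen-Macaulay normal domain is a classical consequence of tight closure theory.
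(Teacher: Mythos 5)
The paper offers no proof of this theorem: it is stated as a recollection of known results, with citations to Huneke--Leuschke, Yao, and Aberbach--Leuschke (the existence of the limit and the refined error term $O\bigl(p^{e(d-1)}\bigr)$ in (1) come from Tucker and Polstra--Tucker). So there is no in-paper argument to compare yours against. Taken on its own, your sketch assembles the right ingredients from that literature: the dictionary between $\ell_A\bigl(A/I_e(A)\bigr)$ and free-rank/splitting numbers (after reduction to the $F$-finite case with perfect residue field, e.g.\ via the $\Gamma$-construction, which also reconciles the normalizations $p^{ed}$ versus $\rank_A F^e_*A$), the Hilbert--Kunz comparison with a test element for (1), Kunz plus the Huneke--Leuschke length count for (2), and Aberbach--Leuschke for the hard implication of (3), which you correctly identify as the technical heart.

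One step as written would fail. In part (3) you argue that $s(A)>0$ gives $a_e>0$ for $e\gg 0$, and that from this ``one finds,'' for any nonzero $c$, a splitting sending $F^e_*c$ to $1$. That inference is not valid: $F$-splitting ($a_e>0$) does not imply strong $F$-regularity, and positivity of the limit is used in an essential, quantitative way. The standard argument for this (easy) direction is the contrapositive: if $A$ is a domain and some $0\neq c$ lies in $I_e(A)$ for every $e$, then since $\m^{[p^e]}\subseteq I_e(A)$ one gets $cA+\m^{[p^e]}\subseteq I_e(A)$, whence
\[
\ell_A\bigl(A/I_e(A)\bigr)\;\leq\;\ell_A\bigl(A/(cA+\m^{[p^e]})\bigr)\;=\;O\bigl(p^{e(d-1)}\bigr)
\]
because $\dim A/cA\leq d-1$; hence $s(A)=0$. (Non-domains are disposed of separately, since they are neither strongly $F$-regular nor of positive $F$-signature.) With that correction your outline matches the cited proofs.
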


The Hilbert--Kunz multiplicity and $F$-signature are also known to satisfy additional properties in the $F$-finite setting, such as semi-continuity.

\begin{theorem} \cite{SmirnovSemiContHK, PolstraFsigSemiCont,PolstraTuckerFSingHKMultCombinedApproach}
Consider an $F$-finite domain $A$.\footnote{Note that upper semi-continuiuty of the Hilbert--Kunz multiplicity is also known to hold for a ring which is essentially of finite type over an excellent local ring.}
\begin{enumerate}
\item
The Hilbert--Kunz mulitiplicity determines an upper semi-continuous function
\[
Q \in \Spec(A) \mapsto e_{\mathrm{HK}}(A_Q)
\]
on $\Spec(A)$.
\item
The $F$-signature determines a lower semi-continuous function
\[
Q \in \Spec(A) \mapsto s(A_Q)
\]
on $\Spec(A)$.
\end{enumerate}
\end{theorem}

\noindent
Moreover, if $(A,\m)$ is an $F$-finite local ring of dimension $d$, note that one can alternately describe the degeneracy ideals as
\begin{align*}
I_e(A) &= \bigl\langle a \in A \mid A \xrightarrow{1 \mapsto F^e_* a}  F^e_*A \mbox{ is not a split $A$-module inclusion} \bigr\rangle \\
&= \bigl\langle a \in A \mid \phi(F^e_*a) \in \m \mbox{ for all } \phi \in \Hom_A(F^e_*A,A) \bigr\rangle,
\end{align*}
and the $F$-signature can be viewed as giving an asymptotic measure of the number of splittings of the $e$-iterated Frobenius.
In particular, if $(A, \m)$ is an $F$-finite local domain, we have
\[
e_{\mathrm{HK}}(A) = \lim_{e \rightarrow \infty} \frac{\mu_A \bigl(A^{1/p^e}\bigr)}{\rank_A (A^{1/p^e})}
\quad
\mbox{ and }
\quad
s(A) = \lim_{e \rightarrow \infty} \frac{\frk_A \bigl(A^{1/p^e}\bigr)}{\rank_A (A^{1/p^e})}.
\]
where $\mu_A(\blank)$ denotes the minimal number of generators and $\frk_A(\blank)$ denotes free rank; see \cite{PolstraTuckerFSingHKMultCombinedApproach} for details.  Recall that, for arbitrary (and not necessarily local) $A$, the free rank  of an $A$-module $M$ is the maximal rank $\frk_A(M)$ of a free $A$-module quotient of $M$.  \emph{For us going forward, we will use $a_e(R)$ (resp. $b_e(R)$) to denote the free rank (resp. minimal number of generators) of $R^{1/p^e}$, and may write simply $a_e$ (resp. $b_e$) if the context is clear}

One can generalize the interpretation of Hilbert--Kunz multiplicity and $F$-signature for $F$-finite rings beyond the local setting as well. To make this more precise, recall first the following result of Kunz.

\begin{lemma}\cite{KunzOnNoetherianRingsOfCharP}
If $A$ is a reduced equidimensional $F$-finite ring, the function
\[
Q \in \Spec(A) \mapsto \bigl[k(Q)^{1/p^e}:k(Q)\bigr] \cdot p^{e\height Q}
\]
is constant on $\Spec(A)$.  In particular, if $A$ is a domain,
\[
\rank_A\bigl(A^{1/p^e}\bigr)  = \bigl[k(Q)^{1/p^e}:k(Q)\bigr] \cdot p^{e \height Q}
\]
for any $e \geq 0$ and $Q \in \Spec(A)$.
\end{lemma}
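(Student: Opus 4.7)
The plan is to reduce the constancy of $\varphi(Q) := [k(Q)^{1/p}:k(Q)] \cdot p^{\height Q}$ to a classical ramification computation in a DVR.  Since $\varphi$ depends on $A$ only through $k(Q)$ and $\height Q$, and since $A$ is reduced and equidimensional, these data agree whether computed in $A$ or in $A/P$ for any minimal prime $P \subseteq Q$.  So it suffices to treat the case where $A$ is an $F$-finite domain with fraction field $K$, and to establish $[K^{1/p}:K] = [k(Q)^{1/p}:k(Q)] \cdot p^{\height Q}$ for every prime $Q$ (the case $Q=(0)$ being tautological, as it just reads $[K^{1/p}:K]$).

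I would then induct on $h := \height Q$.  For $h \geq 1$, pick a saturated chain $(0) \subsetneq P \subsetneq Q$ with $\height P = h-1$.  Since $A/P$ is again an $F$-finite domain, applying the inductive hypothesis inside $A/P$ reduces the step to the height-one statement: for an $F$-finite one-dimensional local domain $(A, \m, k)$ with fraction field $K$, one has $[K^{1/p}:K] = [k^{1/p}:k] \cdot p$.

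For this, pass to the normalization $\widetilde{A}$, which is module-finite over $A$ by excellence of $F$-finite rings and is a semi-local Dedekind domain.  Tracking the residue-field degrees through $A \hookrightarrow \widetilde{A}$ and through its Frobenius $\widetilde{A} \hookrightarrow \widetilde{A}^{1/p}$ reduces the problem to the DVR case.  For a DVR $(R, \pi, k)$ with fraction field $K$, the ring $R^{1/p}$ is itself a DVR with uniformizer $\pi^{1/p}$ and residue field $k^{1/p}$, so the ramification-residue identity $[K^{1/p}:K] = e \cdot f = p \cdot [k^{1/p}:k]$ closes the induction.

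Once $\varphi$ is known to be constant, the ``In particular'' clause is immediate: for an $F$-finite domain $A$, the module $A^{1/p^e}$ is torsion-free of generic rank $[K^{1/p^e}:K]$, and iterating the $e=1$ identity along the chain $K \subset K^{1/p} \subset \cdots \subset K^{1/p^e}$ (and similarly for $k(Q)$) yields $\rank_A(A^{1/p^e}) = [k(Q)^{1/p^e}:k(Q)] \cdot p^{e \height Q}$.  The main obstacle I anticipate is the height-one reduction, specifically verifying that the residue-field degrees in $\widetilde{A}/\m\widetilde{A}$ combine correctly with their Frobenius counterparts; the essential input is $F$-finiteness, which both makes normalization module-finite and keeps every residue-field extension in sight finite.
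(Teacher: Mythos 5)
The paper does not actually prove this lemma---it is quoted from Kunz---so there is no in-text argument to compare against; what you write is essentially Kunz's original proof, and its core is sound. The induction on height, the reduction of the inductive step to a one-dimensional $F$-finite local domain $(A,\m,k)$ with fraction field $K$, and the passage to the normalization $\widetilde A$ and then to a DVR, where $[K^{1/p}:K]=e\cdot f=p\,[k^{1/p}:k]$, are all correct. The obstacle you flag in the semi-local Dedekind step is resolved by the standard fact that for any finite extension $k\subseteq l$ of $F$-finite fields one has $[l^{1/p}:l]=[k^{1/p}:k]$: indeed $[l^{1/p}:k]=[l^{1/p}:l]\,[l:k]=[l^{1/p}:k^{1/p}]\,[k^{1/p}:k]$, and $[l^{1/p}:k^{1/p}]=[l:k]$ because $(\,\cdot\,)^{1/p}$ is an isomorphism of extensions. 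This identity lets you replace the residue field of whichever maximal ideal of $\widetilde A$ you localize at by $k$ itself, so a single DVR localization suffices; note also that $\widetilde A$ is again $F$-finite since $\widetilde{A}^{1/p}$ is the normalization of $A^{1/p}$, which is module-finite over $A$ by excellence.

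Two points in your opening reduction need more care. First, the equality $\height_A Q=\height_{A/P}(Q/P)$ for a minimal prime $P\subseteq Q$ is exactly where equidimensionality is consumed, and it also uses that $A$ is catenary (automatic here, as $F$-finite rings are excellent); this should be said explicitly rather than asserting that the data ``agree.'' Second, and more substantively, your argument only establishes that the function is constant on each irreducible component of $\Spec(A)$, with value $[k(P)^{1/p}:k(P)]$ for $P$ the generic point of that component. To get constancy on all of $\Spec(A)$ you must compare these values across components, which one does by evaluating at a point lying on two meeting components---and that requires $\Spec(A)$ to be connected. Without connectedness the statement is literally false: $A=\bF_p\times\bF_p(t)$ is reduced, zero-dimensional (hence equidimensional) and $F$-finite, yet the function takes the values $1$ and $p$. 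So you should either invoke connectedness (harmless for every use of the lemma in this paper, where $A$ is a domain) or state the conclusion per connected component.
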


\noindent
We recall a recent result globalizing Hilbert--Kunz multiplicity and $F$-signature.

\begin{theorem}\cite{DeStefaniPolstraYaoGlobalizingFinvariants}
If $A$ is a reduced equidimensional $F$-finite ring, and $\gamma \in \ZZ_{\geq 0}$ with  $p^{\gamma} = \bigl[k(Q)^{1/p}:k(Q)\bigr] \cdot p^{\height Q} $ for all $Q \in \Spec(A)$, then
the limit
\[
e_{\mathrm{HK}}(A) = \lim_{e \rightarrow \infty} \frac{\mu_A\bigl(A^{1/p^e}\bigr)}{p^{e \gamma}}
\]
exists and equals $\max \{ e_{\mathrm{HK}}(A_Q) \mid Q \in \Spec(A) \} = \max \{ e_{\mathrm{HK}}(A_\m) \mid \m \in \max \Spec(A) \}$.
Similarly,
the limit
\[
s(A) = \lim_{e \rightarrow \infty} \frac{\frk_A\bigl(A^{1/p^e}\bigr)}{p^{e \gamma}}
\]
exists and equals $\min \{ s(A_Q) \mid Q \in \Spec(A) \} = \min \{ s(A_\m) \mid \m \in \max \Spec(A) \}$.
\end{theorem}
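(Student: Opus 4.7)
The plan is to prove the two equalities separately. For the equality $\min \{ s(A_Q) \mid Q \in \Spec(A) \} = \min \{ s(A_\m) \mid \m \in \max \Spec(A) \}$, I would invoke the lower semi-continuity of the local $F$-signature function cited above. The sublevel set $\{Q : s(A_Q) \leq \lambda\}$ is closed in $\Spec(A)$, hence stable under specialization, so $s(A_\m) \leq s(A_Q)$ whenever $Q \subseteq \m$. Consequently, the infimum of $s$ over $\Spec(A)$ is realized at a closed point, giving the second equality.

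For the existence of the global limit and its identification with $\min_\m s(A_\m)$, the plan is to sandwich $\frk_A(A^{1/p^e})/p^{e\gamma}$ between $\limsup$ and $\liminf$ estimates. The upper bound is straightforward: any free $A$-summand of $A^{1/p^e}$ localizes to a free $A_\m$-summand of $(A_\m)^{1/p^e}$, so $\frk_A(A^{1/p^e}) \leq \frk_{A_\m}((A_\m)^{1/p^e})$ for every maximal $\m$. Dividing by $p^{e\gamma}$ and passing to $\limsup$ in $e$ gives
\[
\limsup_{e \to \infty} \frac{\frk_A(A^{1/p^e})}{p^{e\gamma}} \leq s(A_\m) \quad \text{for every } \m,
\]
and hence this $\limsup$ is bounded above by $\min_\m s(A_\m)$.

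The heart of the proof is the matching lower bound $\liminf_e \frk_A(A^{1/p^e})/p^{e\gamma} \geq \min_\m s(A_\m)$. Here I would combine two ingredients. First, a uniform-convergence strengthening of the local-$F$-signature limit: for every $\epsilon > 0$ there exists $e_0$ so that $\frk_{A_\m}((A_\m)^{1/p^e})/p^{e\gamma} \geq s(A_\m) - \epsilon$ for all $e \geq e_0$ and all maximal $\m$ simultaneously---this is essentially contained in the Polstra and Polstra--Tucker analysis underlying the semi-continuity result. Second, a global-to-local comparison of free ranks of the $A$-module $A^{1/p^e}$: although local free rank can strictly exceed global free rank in general (a nonprincipal ideal in a Dedekind domain has local free rank $1$ but global free rank $0$), for $A^{1/p^e}$ the discrepancy is bounded by a term of lower order than $p^{e\gamma}$, governed by the Krull dimension of $A$ and the codimension of the singular locus.

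The hard part is this global-to-local comparison. My plan is to exploit the fact that $A^{1/p^e}$ is locally free on the regular locus $U \subseteq \Spec(A)$ and that its complement has codimension $\geq 1$: spreading out local trivializations on $U$ and carefully comparing with local splittings near the singular locus should produce a global free summand of rank at least $\min_\m \frk_{A_\m}((A_\m)^{1/p^e}) - O(p^{e(\gamma - 1)})$. Dividing by $p^{e\gamma}$ kills this correction in the limit, and together with the uniform local convergence yields $\liminf_e \frk_A(A^{1/p^e})/p^{e\gamma} \geq \min_\m s(A_\m) - \epsilon$ for every $\epsilon > 0$. Combined with the upper bound, this establishes the existence of the global limit and its equality with $\min_\m s(A_\m) = \min_Q s(A_Q)$.
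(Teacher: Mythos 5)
This statement is a recalled theorem, cited to \cite{DeStefaniPolstraYaoGlobalizingFinvariants}; the paper gives no proof of it, so there is nothing internal to compare against. Judged on its own terms, your outline has the right architecture and in fact matches the strategy of the published proof: lower semi-continuity handles the reduction to closed points (your specialization argument is correct, though you should note that the minimum is actually attained because the descending chain of closed sublevel sets stabilizes on a Noetherian space); localizing a global free quotient gives the upper bound $\limsup_e \frk_A(A^{1/p^e})/p^{e\gamma} \leq \min_\m s(A_\m)$; and the lower bound does indeed rest on the two ingredients you name, namely Polstra's uniform convergence estimate $\bigl|\frk_{A_\m}((A_\m)^{1/p^e})/p^{e\gamma} - s(A_\m)\bigr| \leq C/p^e$ with $C$ independent of $\m$, together with a global-to-local comparison of free ranks.

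The genuine gap is in that last comparison, which you correctly identify as the hard part but do not actually prove. Your proposed mechanism --- spreading out trivializations from the regular locus and matching them with splittings near the singular locus --- is not how this goes and is not obviously repairable as stated: the local free rank of $A^{1/p^e}$ at a singular point is not controlled by any trivialization on the regular locus, and ``comparing'' the two kinds of data gives no global surjection onto a free module. The correct tool is Serre splitting in the form of the Eisenbud--Evans basic element theorem, which yields
\[
\frk_A(M) \;\geq\; \min_{\m \in \max\Spec(A)} \frk_{A_\m}(M_\m) \;-\; \dim A
\]
for a finitely generated module $M$ over a Noetherian ring of finite Krull dimension (this is the key lemma in \cite{DeStefaniPolstraYaoGlobalizingFinvariants}). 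Note that the defect is a constant, independent of $e$ --- stronger than the $O(p^{e(\gamma-1)})$ you aim for, but obtained by a completely different argument. With that lemma in hand, dividing by $p^{e\gamma}$ and invoking the uniform estimate does finish the proof exactly as you describe.
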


\subsection{Divisors}
In this subsection, we review the definitions and properties of the $F$-signature of divisor pairs.

\begin{definition}
If $(A,\m)$ is a normal excellent local domain of dimension $d$ and $D$ is an effective Weil divisor on $\Spec(A)$, the $e$-th Frobenius degeneracy ideal along $D$ is
\[
I_e(A,D) = \bigl\langle a \in A \mid A \xrightarrow{1 \mapsto F^e_* a}  F^e_*\bigl(A (D)\bigr)\mbox{ is not a pure $A$-module inclusion} \bigr\rangle.
\]
 If $\Delta$ is an effective $\Q$-divisor on $\Spec(A)$, the $F$-signature of $(A, \Delta)$ is
\[
s(A, \Delta) = \lim_{e \rightarrow \infty} \frac{1}{p^{ed}} \ell_A\Bigl( A \big/ I_e\bigl(A, \lceil (p^e - 1) \Delta \rceil \bigr) \Bigr).
\]
\end{definition}

\begin{lemma}
\label{lem:perturb}
Suppose $(A,\m)$ is a normal excellent local domain of dimension $d$ and $\Delta$ is an effective $\Q$-divisor on $\Spec(A)$.
Let $\{ D_e \}_{e > 0}$ be a sequence of Weil divisors on $\Spec(A)$ with bounded difference from $\bigl\{ \lceil (p^e -1) \Delta \rceil \bigr\}_{e > 0}$ independent of $e > 0$.  In other words, there exists an effective Cartier divisor $C$ such that
\[
- C \leq D_e - \lceil (p^e -1) \Delta \rceil \leq C
\]
for all $e > 0$.  Then
\[
s(A, \Delta) = \lim_{e \rightarrow \infty} \frac{1}{p^{ed}} \ell_A\bigl( A / I_e(A, D_e) \bigr).
\]
\end{lemma}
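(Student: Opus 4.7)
The plan is to show
\[ \bigl|\ell_A\bigl(A/I_e(A, D_e)\bigr) - \ell_A\bigl(A/I_e(A, \lceil(p^e-1)\Delta\rceil)\bigr)\bigr| = O\bigl(p^{e(d-1)}\bigr), \]
so that after dividing by $p^{ed}$ and letting $e \to \infty$ the two limits coincide, simultaneously establishing existence of the left-hand limit and its equality with $s(A, \Delta)$. Since $(A,\m)$ is local, the effective Cartier divisor $C$ is principal, say $C = \divisor(f)$ for some $0 \neq f \in A$. The central observation is that shifting a Weil divisor by $C$ corresponds to an algebraic colon: for any effective Weil divisor $D$, one has
\[ I_e(A, D + C) = I_e(A, D) : f. \]
Indeed, multiplication by $f$ identifies $A(D + C) = f^{-1}A(D)$ with $A(D)$ as $A$-modules, sending $1 \in A \subseteq A(D+C)$ to $f \in A(D)$; more generally $F^e_* a \in F^e_* A(D+C)$ corresponds to $F^e_*(fa) \in F^e_* A(D)$. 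Hence $1 \mapsto F^e_* a$ is pure into $F^e_* A(D+C)$ precisely when $fa \notin I_e(A, D)$.

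From this identity together with the standard exact sequence
\[ 0 \to A/(I_e(A, D) : f) \xrightarrow{\,\cdot f\,} A/I_e(A, D) \to A/(I_e(A, D), f) \to 0 \]
I derive the shift formula
\[ \ell_A\bigl(A/I_e(A, D + C)\bigr) = \ell_A\bigl(A/I_e(A, D)\bigr) - \ell_A\bigl(A/(I_e(A, D), f)\bigr). \]
The containment $\m^{[p^e]} \subseteq I_e(A, D)$ holds automatically: if $y \in \m$ and $\phi \in \Hom(F^e_* A(D), A)$, then by $A$-linearity $\phi(F^e_* y^{p^e}) = y \cdot \phi(F^e_* 1) \in \m$. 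Consequently
\[ \ell_A\bigl(A/(I_e(A, D), f)\bigr) \leq \ell_A\bigl(A/(\m^{[p^e]}, f)\bigr) = \ell_{A/(f)}\bigl((A/(f)) \big/ \bar{\m}^{[p^e]}\bigr) = O\bigl(p^{e(d-1)}\bigr), \]
where the last estimate is Hilbert--Kunz asymptotics on the $(d-1)$-dimensional local ring $A/(f)$. Thus any single shift by $C$ alters $\ell_A(A/I_e(A, \cdot))$ by at most $O(p^{e(d-1)})$.

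Finally, after adding $C$ to both sequences if needed to ensure all divisors in sight are effective (each such shift alters the lengths only by $O(p^{e(d-1)})$ by the shift formula), the sandwich $D_e \leq \lceil(p^e-1)\Delta\rceil + C$ and $\lceil(p^e-1)\Delta\rceil \leq D_e + C$ together with the monotonicity $I_e(A, D') \subseteq I_e(A, D)$ for $D' \leq D$ gives
\[ \ell_A\bigl(A/I_e(A, D_e)\bigr) \geq \ell_A\bigl(A/I_e(A, \lceil(p^e-1)\Delta\rceil + C)\bigr) = \ell_A\bigl(A/I_e(A, \lceil(p^e-1)\Delta\rceil)\bigr) - O\bigl(p^{e(d-1)}\bigr), \]
and symmetrically the reverse inequality, yielding the desired $O(p^{e(d-1)})$ bound. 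I expect the main conceptual obstacle to be the key identity $I_e(A, D+C) = I_e(A, D) : f$, since it requires translating the purity condition for the divisor-shifted module into an algebraic colon operation; once that is in hand, the remainder of the argument is a routine combination of monotonicity, the colon-ideal length formula, and the Hilbert--Kunz estimate on $A/(f)$.
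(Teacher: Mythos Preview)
Your argument is correct and follows the same route as the references the paper cites (\cite[Lemma~4.17]{BlickleSchwedeTuckerFSigPairs1} and \cite[Theorem~4.13]{PolstraTuckerCombined}): the colon identity $I_e(A,D+C)=I_e(A,D):f$, the containment $\m^{[p^e]}\subseteq I_e(A,D)$, and the resulting Hilbert--Kunz bound on $A/(f)$ are exactly the ingredients used there. One small remark: your justification of $\m^{[p^e]}\subseteq I_e(A,D)$ invokes the splitting characterization via $\phi\in\Hom(F^e_*A(D),A)$, which as stated in the paper is only available in the $F$-finite case; for the general excellent setting you should instead note that $F^e_*a\in\m\cdot F^e_*A(D)$ for $a\in\m^{[p^e]}$, whence $(F^e_*a)\otimes u=0$ in $F^e_*A(D)\otimes_A E_A$ since $\m u=0$.
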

\begin{proof}
This is essentially the same argument as \cite[Lemma 4.17]{BlickleSchwedeTuckerFSigPairs1} and \cite[Theorem 4.13]{PolstraTuckerFSingHKMultCombinedApproach}, and so we omit it.
\end{proof}
%\todo{
%where the first and last equality follow from a variant of the argument in \cite[Lemma 4.17]{BlickleSchwedeTuckerFSigPairs1}, see also \cite[Theorem 4.13]{PolstraTuckerFSingHKMultCombinedApproach}}

%to be the maximal number $a_e(D)$ such that $A^{1/p^e} = A^{\oplus a_e(D)} \oplus M$ where every induced projection $\pi$ onto a free summand has corresponding Weil divisor $D_{\pi} \geq D$ (where $D_{\phi} \sim (1-p^e)K_A$) and every free summand of $M$ has corresponding projection map $\rho$ such that $D_{\rho}$ is not $\geq D$.
%\todo{{\bf Kevin will write more}}.

\begin{theorem}\cite{BlickleSchwedeTuckerFSigPairs1,PolstraTuckerFSingHKMultCombinedApproach}
Suppose $(A,\m)$ is a normal excellent local domain of dimension $d$ and $\Delta$ is an effective $\Q$-divisor on $\Spec(A)$.
\begin{enumerate}
\item
The limit defining the $F$-signature $s(A, \Delta)$ exists, and moreover
\[
\ell_A\Bigl( A \big/ I_e\bigl(A, \lceil (p^e - 1) \Delta \rceil \bigr) \Bigl) = s(A, \Delta) \cdot p^{ed} + O\bigl(p^{e(d - 1)}\bigr).
\]
\item
The $F$-signature $s(A, \Delta) \geq 0$, and $s(A, \Delta) > 0$ if and only if $(A, \Delta)$ is strongly $F$-regular.
\end{enumerate}
\end{theorem}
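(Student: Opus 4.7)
The plan is to follow the uniform-convergence template of Tucker (extended to pairs in \cite{BlickleSchwedeTuckerFSigPairs1} and refined in \cite{PolstraTuckerCombined}), proving existence of the limit and the error estimate in (1) simultaneously. Set $a_e := p^{-ed}\,\ell_A\bigl(A/I_e(A,\lceil(p^e-1)\Delta\rceil)\bigr)$. First I would apply \autoref{lem:perturb} to replace $\lceil(p^e-1)\Delta\rceil$ by a computationally friendlier bounded perturbation. Fixing $N$ with $N\Delta$ integral, one can choose a sequence of Weil divisors $D_e$ satisfying $p D_e \leq D_{e+1} + C$ for a fixed effective Cartier divisor $C$ independent of $e$; this provides comparison maps $F^e_*A(D_e) \hookrightarrow F^{e+1}_*A(D_{e+1}+C)$ linking $I_e$ and $I_{e+1}$ up to a controlled Cartier twist. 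A direct comparison then yields $|a_{e+1}-a_e| = O(p^{-e})$, so $\{a_e\}$ is Cauchy with limit $s(A,\Delta)$, while telescoping produces the sharp error bound $\ell_A\bigl(A/I_e(A,\lceil(p^e-1)\Delta\rceil)\bigr) = s(A,\Delta)\,p^{ed}+O(p^{e(d-1)})$.

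For (2), the inequality $s(A,\Delta)\geq 0$ is immediate from the definition. For the equivalence with strong $F$-regularity, recall that $(A,\Delta)$ is strongly $F$-regular if and only if, for every nonzero $c \in A$, the map $A \xrightarrow{1\mapsto F^e_*c} F^e_*A(\lceil(p^e-1)\Delta\rceil)$ is pure for some $e$, equivalently $c \notin I_e(A,\lceil(p^e-1)\Delta\rceil)$ for all sufficiently large $e$. If $s(A,\Delta)>0$, the asymptotic growth of $\ell_A(A/I_e)$ forces the descending behavior of the $I_e$ to be nontrivial; choosing a candidate test element $c$ and using that the existence of sufficiently many splittings propagates along Frobenius (the Aberbach-Leuschke argument, adapted to the pair) shows $c \notin I_e$ for large $e$, and test-element theory upgrades this to strong $F$-regularity. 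Conversely, if $(A,\Delta)$ is strongly $F$-regular, fix a nonzero $c$ with $c \notin I_e$ for all $e \gg 0$; the standard argument then manufactures order $p^{ed}$ independent splittings contributing to $\ell_A(A/I_e)$, forcing $s(A,\Delta)>0$.

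The main obstacle is the uniform comparison step in (1): constructing $A$-linear comparison maps compatibly with Frobenius iteration while controlling the divisorial perturbation introduced by the ceiling $\lceil(p^e-1)\Delta\rceil$. The perturbation lemma \autoref{lem:perturb} is essential here, as it decouples the combinatorics of the ceiling function from the Frobenius comparison; once one passes to a sequence $\{D_e\}$ with multiplicatively compatible behavior under Frobenius, the comparison maps become essentially those of the unperturbed argument, and the rest of the proof proceeds in parallel with the classical case. The subtlety in part (2) is similar in spirit: one needs a uniform choice of test element that works across all $e$, and the bounded-perturbation flexibility provided by \autoref{lem:perturb} again allows one to reduce to a tractable setting.
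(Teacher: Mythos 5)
The paper does not prove this statement; it is recalled from \cite{BlickleSchwedeTuckerFSigPairs1,PolstraTuckerCombined}, so your sketch can only be measured against those references and against the places where the paper redeploys the same template internally (\autoref{prop.uniformconverginfamily} and \autoref{prop.uniformconverginfamilydeltas}). Your outline is faithful to that strategy: show $|a_{e+1}-a_e|\leq C/p^{e}$, deduce convergence and the $O\bigl(p^{e(d-1)}\bigr)$ error by telescoping (this is exactly \cite[Lemma 3.5]{PolstraTuckerCombined}), and absorb the ceiling $\lceil(p^e-1)\Delta\rceil$ via the bounded-perturbation flexibility of \autoref{lem:perturb}. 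Two points carry the real content and are only asserted in your write-up. First, the uniform constant does not come from the comparison maps themselves but from a length bound on their cokernels: one fixes generically isomorphic maps between $F_*A$ and a free module of the generic rank of $F_*A$, kills the torsion cokernels by a single $0\neq c$, and then invokes a Hilbert--Kunz-type estimate $\ell\bigl(M/\m^{[p^e]}M\bigr)\leq C' p^{e(d-1)}$ for modules of dimension at most $d-1$ (the relative version of this is \autoref{thm.uniformbound}); without it the claim $|a_{e+1}-a_e|=O(p^{-e})$ is unsubstantiated. Second, descending the comparison maps to the quotients $A/I_e$ requires the precise containments $c\,I_e\bigl(A,\lceil(p^e-1)\Delta\rceil\bigr)^{[p]}\subseteq I_{e+1}\bigl(A,\lceil(p^{e+1}-1)\Delta\rceil\bigr)$ and $\phi\bigl((c\,I_{e+1})^{1/p}\bigr)\subseteq I_e$, which is where the ceiling combinatorics genuinely enters and why one premultiplies by an element $c'$ with $\Div(c')\geq p\Delta$, as in \autoref{prop.uniformconverginfamilydeltas}; your ``controlled Cartier twist'' gestures at this but does not verify it. Your treatment of (2) correctly identifies the Aberbach--Leuschke mechanism in both directions. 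In sum, the proposal is strategically correct and consistent with the cited proofs, but it is a scaffold rather than a proof until these two estimates are established.
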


If $A$ is an $F$-finite normal excellent domain of dimension $d$ and $D$ is an effective Weil divisor on $\Spec(A)$, one can define the \emph{free rank of $A^{1/p^e}$ along $D$}
\begin{equation} \label{eqn.Definition:a_e^D}
    a_e^{D}(A) = \frk_{A}^D\bigl(A^{1/p^e}\bigr)
\end{equation}

to be the maximal rank $a_e(D)$ of a simultaneous free $A$-module quotient of $A^{1/p^e}$ and $\bigl(A(D)\bigr)^{1/p^e}$.    In other words, $a_e(D)$ is the largest non-negative integer such that there is a commuting diagram
\[
\xymatrix{
& \bigl(A(D)\bigr)^{1/p^e} \ar@{->>}[dr] & \\
A^{1/p^e} \ar@{->>}[rr] \ar[ru]^{\subseteq} & & A^{\oplus a_e(D)}.
}
\]
In case $(A,\m)$ is local, we have that $\frk_{A}^D\bigl(A^{1/p^e}\bigr) = [k(\fram)^{1/p^e}:k(\fram)]\cdot \ell_A\bigl( A / I_e(A, D) \bigr)$ (see \cite[Proposition 3.5]{BlickleSchwedeTuckerFSigPairs1}), and once more this leads to a recent global interpretation of the $F$-signature along a divisor.

\begin{theorem} \cite{DeStefaniPolstraYaoGlobalizingFinvariants}
\label{thm.globaldivisorfsig}
Let $A$ be an $F$-finite normal excellent  domain of dimension $d$, and $\gamma \in \ZZ_{\geq 0}$ with  $p^{\gamma} = \bigl[k(Q)^{1/p}:k(Q)\bigr] \cdot p^{\height Q} $ for all $Q \in \Spec(A)$.   Suppose $\Delta$ is an effective $\Q$-divisor on $\Spec(A)$.
The $F$-signature along $\Delta$ determines a lower semi-continuous function
\[
Q \in \Spec(A) \mapsto s(A_Q, \Delta)
\]
on $\Spec(A)$.  Moreover, the limit
\[
s(A, \Delta) = \lim_{e \rightarrow \infty} \frac{\frk_A^{\lceil p^e \Delta \rceil}\bigl(A^{1/p^e}\bigr)}{p^{e \gamma}}
\]
exists and equals $\min \{ s(A_Q, \Delta) \mid Q \in \Spec(A) \} = \min \{ s(A_\m, \Delta) \mid \m \in \max \Spec(A) \}$.
\end{theorem}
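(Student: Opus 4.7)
The plan is to adapt the non-pair argument of \cite{DeStefaniPolstraYaoGlobalizingFinvariants} to the divisor pair setting. Three assertions are in play: lower semi-continuity of $Q \mapsto s(A_Q,\Delta)$, existence of the global limit, and its identification with the minimum over closed points. All three follow from two key inputs---a uniform convergence estimate for the $F$-signature of a pair, and a global-to-local comparison for the free rank along a divisor---both of which carry over from the non-pair case with modifications to track the divisor.

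For semi-continuity, I would invoke the uniform convergence estimate for pairs due to Polstra--Tucker \cite{PolstraTuckerCombined}: there is a constant $C$, independent of $Q$, such that
\[
\biggl| s(A_Q,\Delta) - \frac{\ell_{A_Q}\bigl(A_Q/I_e(A_Q,\lceil(p^e-1)\Delta\rceil)\bigr)}{p^{e\height Q}} \biggr| \leq \frac{C}{p^e}
\]
for all $Q \in \Spec A$ and $e \gg 0$. For each fixed $e$, the function $Q \mapsto \frk_{A_Q}^{D}(A_Q^{1/p^e}) = \ell_{A_Q}\bigl(A_Q/I_e(A_Q,D)\bigr)$ is lower semi-continuous: given a simultaneous free quotient of rank $n$ at $Q$, the chosen generators remain generators on an open neighborhood by the openness of surjectivity for maps of coherent sheaves. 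A uniform limit of lower semi-continuous functions is lower semi-continuous, which yields the first claim.

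The central new technical point is the equality
\[
\frk_A^{D}\bigl(A^{1/p^e}\bigr) = \min_{\m \in \max\Spec A}\, \frk_{A_\m}^{D}\bigl(A_\m^{1/p^e}\bigr)
\]
for any effective Weil divisor $D$. The inequality $\leq$ is immediate by localizing a global simultaneous free quotient. For the reverse, I would adapt the patching argument of \cite{DeStefaniPolstraYaoGlobalizingFinvariants}: starting from a local simultaneous surjection $\bigl(A(D)\bigr)_{\m_0}^{1/p^e} \twoheadrightarrow A_{\m_0}^{\oplus n}$ whose restriction to $A_{\m_0}^{1/p^e}$ is still surjective, one uses prime avoidance together with the reflexivity of $A^{1/p^e}$ and $\bigl(A(D)\bigr)^{1/p^e}$ to modify it to a global map $\bigl(A(D)\bigr)^{1/p^e} \to A^{\oplus n}$ that remains surjective on $A^{1/p^e}$ at every other maximal ideal. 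I expect this patching to be the main obstacle: the divisor forces one to track free quotients of two reflexive sheaves simultaneously along an inclusion, rather than a single module as in the non-pair case, and the prime-avoidance modifications must be applied compatibly with both pieces of the data.

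Applying the identity with $D_e = \lceil p^e \Delta \rceil$ and dividing by $p^{e\gamma}$, \autoref{lem:perturb} together with the uniform convergence above allows one to interchange the minimum with the limit as $e \to \infty$, yielding
\[
\lim_{e\to\infty} \frac{\frk_A^{\lceil p^e \Delta\rceil}\bigl(A^{1/p^e}\bigr)}{p^{e\gamma}} = \min_{\m \in \max\Spec A} s(A_\m,\Delta).
\]
The lower semi-continuity from the first step shows that this minimum agrees with $\min_{Q\in\Spec A} s(A_Q,\Delta)$, since every point of $\Spec A$ specializes to a closed point.
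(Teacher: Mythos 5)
The first thing to say is that the paper does not prove this statement at all: it is recalled verbatim from \cite{DeStefaniPolstraYaoGlobalizingFinvariants} (with the semicontinuity portion for pairs resting on \cite{PolstraFsigSemiCont,PolstraTuckerCombined}), so there is no internal proof to compare yours against. What you have written is a sketch of how those references argue, and at the level of strategy it is the right one: a uniform-in-$Q$ convergence estimate gives lower semicontinuity, since for fixed $e$ the function $Q\mapsto \frk_{A_Q}^{D}\bigl(A_Q^{1/p^e}\bigr)/p^{e\gamma}$ is lower semicontinuous (a simultaneous free quotient at $Q$ spreads out to a neighborhood) and a uniform limit of lower semicontinuous functions is lower semicontinuous; the global statement then reduces to a fixed-$e$ globalization of the free rank along a divisor, with \autoref{lem:perturb} absorbing the discrepancy between $\lceil p^e\Delta\rceil$ and $\lceil(p^e-1)\Delta\rceil$.

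Two caveats. First, the identity $\frk_A^{D}\bigl(A^{1/p^e}\bigr)=\min_{\m}\frk_{A_\m}^{D}\bigl(A_\m^{1/p^e}\bigr)$ that you correctly isolate as the crux is precisely the main theorem of \cite{DeStefaniPolstraYaoGlobalizingFinvariants}; your sentence about prime avoidance and reflexivity names the right ingredients but is not yet an argument, and in the pair setting one must actually exhibit a single map $\bigl(A(D)\bigr)^{1/p^e}\to A^{\oplus n}$ whose restriction to $A^{1/p^e}$ is surjective at \emph{every} maximal ideal simultaneously --- that patching is where essentially all the work lies, so as a self-contained proof the proposal still has its main step open. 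Second, watch the normalizations when residue fields are imperfect: $\frk_{A_Q}^{D}\bigl(A_Q^{1/p^e}\bigr)$ and $\ell_{A_Q}\bigl(A_Q/I_e(A_Q,D)\bigr)$ differ by the factor $\bigl[k(Q)^{1/p^e}:k(Q)\bigr]$, which is exactly why the theorem divides by $p^{e\gamma}$ with $\gamma$ constant by Kunz's lemma rather than by $p^{e\height Q}$; your two displayed normalizations are equivalent for this reason, but the equivalence deserves a line rather than being used silently.
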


\noindent
In light of \autoref{thm.globaldivisorfsig}, and following \cite{DeStefaniPolstraYaoGlobalizingFinvariants},
we also make the following global definition.

\begin{definition}
For a normal $F$-finite scheme $X$ and effective $\bQ$-divisor $\Delta$ we set
\[
s(X, \Delta) = \min \{ s(\O_{X,x}, \Delta) \mid x \in X \} = \min \{ s(\O_{X,x}, \Delta) \mid x \in X \mbox{ a closed point} \}.
\]
When $X = \Spec A$ is affine, we write $s(A, \Delta)$ for $s(X, \Delta)$.
\end{definition}

\subsection{Divisors and families}
\label{subsec.DivisorsAndFamilies}
Finally, we discuss the correspondence between $\bQ$-divisors and $p^{-e}$-linear maps in the relative setting of $A \subseteq R$ (or in other words, for families).  What follows is contained in \cite{PatakfalviSchwedeZhangFFamilies} although we work in a less general setting.

\begin{setting}
\label{set.RelativeSetting}
Suppose that $A$ is an $F$-finite regular domain and suppose we have $A \subseteq R$ a flat finite type extension of rings with geometrically\footnote{Here we mean that the fibers are normal after any base change, including inseparable ones.} normal fibers.  Additionally assume that for some choice of $\omega_A$,
\begin{equation}
\label{eq.FShriekCompatible}
\tag{$\dagger$}
F^! \omega_A \cong \omega_A.
\end{equation}
This always holds for rings essentially of finite type over a Gorenstein semi-local ring.

For any $A$-algebra $B$, we write $R_{B} = R \otimes_A B$.  Frequent values of $B$ include $A^{1/p^e}$, the fraction field $K \coloneqq K(A)$ and $k(Q)$, the residue field of a point $Q \in \Spec A$.
\end{setting}

We make some quick observations.

\begin{lemma}
\label{lem.BaseChangeIsNormalInSetting}
In the setting of \autoref{set.RelativeSetting}, each $R_{A^{1/p^e}}$ is a normal integral domain, as are $R_{K^{1/p^e}}$ and $R_{K^{\infty}}$ as well.
\end{lemma}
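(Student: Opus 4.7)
The plan is to handle the three claims by reducing everything to the normality-and-integrality of $R_{A^{1/p^e}}$ at the finite stage. Indeed, $R_{K^{1/p^e}}$ is the localization of $R_{A^{1/p^e}}$ obtained by inverting $A \setminus \{0\}$, and $R_{K^{\infty}}$ arises as the filtered colimit $\varinjlim_e R_{K^{1/p^e}}$; so once $R_{A^{1/p^e}}$ is understood, the two generic-point cases reduce to standard facts (localizations of normal domains are normal domains, and filtered colimits of normal domains with injective transition maps are normal domains, since any monic integral equation for an element of the total fraction field is already witnessed at a finite stage and solved there by normality).

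For $R_{A^{1/p^e}}$ I would separate normality from integrality. For normality, Kunz's theorem gives that $A \hookrightarrow A^{1/p^e}$ is finite flat and that $A^{1/p^e}$ is itself regular (it is isomorphic to $A$ via Frobenius). Base changing the given flat map $A \to R$, whose fibers are geometrically normal by hypothesis, produces a flat map $A^{1/p^e}\to R_{A^{1/p^e}}$ whose fibers, obtained from the original fibers by a residue-field extension, remain geometrically normal. Over the regular base $A^{1/p^e}$, the standard fibration criterion for normality then yields that $R_{A^{1/p^e}}$ is normal. For integrality, I would exploit that the finite purely inseparable map $A \hookrightarrow A^{1/p^e}$ is a universal homeomorphism on spectra; base changing, $\Spec R_{A^{1/p^e}} \to \Spec R$ is also a homeomorphism. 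Since $R$ is a domain in \autoref{set.RelativeSetting}, $\Spec R$ is irreducible, hence so is $\Spec R_{A^{1/p^e}}$; combined with normality (which gives reducedness), this forces $R_{A^{1/p^e}}$ to be a normal integral domain.

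The main obstacle, such as it is, is really just the implicit input that $R$ itself is integral, which is what the universal-homeomorphism argument needs in order to conclude irreducibility of $\Spec R_{A^{1/p^e}}$. In the setting this is either read into the phrase ``$A \subseteq R$'' or comes from the natural companion hypothesis that the fibers are geometrically integral; none of the remaining ingredients (the fibration criterion for normality, the universal-homeomorphism property of Frobenius, and permanence of the normal-integral-domain property under localization and filtered colimits with injective transitions) introduces any real difficulty.
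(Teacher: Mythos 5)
Your argument is correct and essentially the paper's: normality of $R_{A^{1/p^e}}$ comes from flatness with normal fibers over the regular base $A^{1/p^e}$ (Matsumura, Theorem 23.9), and integrality from the fact that the purely inseparable base change induces a homeomorphism on spectra together with reducedness, with $R$ implicitly a domain in \autoref{set.RelativeSetting}, exactly as you note. The only difference is cosmetic: for $R_{K^{1/p^e}}$ and $R_{K^{\infty}}$ you localize the finite-stage result and pass to the filtered colimit, whereas the paper localizes to $K$ and re-runs the same fibration argument; both are fine.
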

\begin{proof}
$A^{1/p^e} \to R_{A^{1/p^e}}$ is flat with normal fibers over a regular base, and hence $R_{A^{1/p^e}}$ is normal by \cite[Theorem 23.9]{MatsumuraCommutativeRingTheory}.
Since $R \to  R_{A^{1/p^e}}$ is purely inseparable and $R_{A^{1/p^e}}$ is reduced, it follows that $R_{A^{1/p^e}}$ is a domain.  Localizing, we have that $K \to R_{K}$ also has geometrically normal fibers, and the same argument gives that $R_{K^{1/p^e}}$ and $R_{K^\infty}$ are normal domains as well.
%
%Localizing, it follows that  $R_{K^{1/p^e}}$ is a normal domain as well, and hence also the limit $R_{K^{\infty}}$.
\end{proof}

\begin{lemma}
In the setting of \autoref{set.RelativeSetting}, for each $Q \in \Spec A$ and $x \in \Spec R_{K(Q)} \subseteq \Spec R$ a point of codimension $1$ on the fiber, we have that $R_x$ is regular and thus $\Delta$ is $\bQ$-Cartier at $x$.  In particular, we can restrict  $\Delta|_{\Spec R_{k(Q)}}$ to any fiber.
\end{lemma}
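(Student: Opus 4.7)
The plan is to check that $R_x$ is regular via the standard flat descent of regularity, and then deduce both the $\bQ$-Cartier statement and the well-definedness of the restriction.

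First I would translate the geometry into commutative algebra. The point $x$ corresponds to a prime $\mathfrak{p} \subseteq R$ lying over $Q$, and the hypothesis that $x$ has codimension one on the fiber means that $\mathfrak{p} R_{k(Q)}$ has height one in $R_{k(Q)} = R_Q / Q R_Q$. Because $A \to R$ has geometrically normal fibers, $R_{k(Q)}$ is normal, so its localization at any height one prime is a one-dimensional normal local ring, hence a DVR, and in particular regular. Now $A_Q \to R_{\mathfrak{p}}$ is a flat local homomorphism (flatness is preserved under localization), $A_Q$ is regular since $A$ is regular, and we just saw that the closed fiber $R_\mathfrak{p}/QR_\mathfrak{p}$ is regular. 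The standard result that a flat local extension of a regular local ring with regular closed fiber is itself regular (see \cite[Theorem 23.7]{MatsumuraCommutativeRingTheory}, combined with the flatness formula $\dim R_\mathfrak{p} = \dim A_Q + \dim R_\mathfrak{p}/QR_\mathfrak{p}$) then gives that $R_x = R_\mathfrak{p}$ is regular, as desired.

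Since $R_x$ is a regular local ring, it is a UFD, so every Weil divisor is Cartier at $x$; in particular $\Delta$ is $\bQ$-Cartier at $x$. For the restriction claim, observe that $\Spec R_{k(Q)}$ is a normal integral scheme by \autoref{lem.BaseChangeIsNormalInSetting}, so a Weil divisor on it is determined by its behavior at codimension one points. At each such point the previous paragraph shows $\Delta$ is $\bQ$-Cartier, so after choosing a local equation for a Cartier multiple $n\Delta$ in a neighborhood of each codimension one point of the fiber and pulling back to $\Spec R_{k(Q)}$, we obtain well-defined local $\bQ$-divisors that glue to a canonical $\bQ$-divisor $\Delta|_{\Spec R_{k(Q)}}$.

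The main (and really only) obstacle is the regularity step; everything else is a formal consequence. That step is not conceptually subtle, but it does rely on a clean use of the flat local criterion for regularity together with the geometric normality hypothesis to guarantee that the fiber contribution is actually a DVR rather than merely normal in some weaker sense.
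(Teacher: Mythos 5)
Your proof is correct and follows essentially the same route as the paper: geometric normality makes the localized fiber a one-dimensional normal local ring, hence a DVR, and then regularity of $R_x$ follows from the standard flat base-change criterion (the paper phrases this as killing a regular sequence coming from a regular system of parameters of $A_Q$, which is the same underlying argument as \cite[Theorem 23.7]{MatsumuraCommutativeRingTheory}). The remaining deductions about $\bQ$-Cartierness and restriction are the same formal consequences the paper draws.
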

\begin{proof}
Choose a codimension 1 point $x \in \Spec R_{K(Q)}$,  in other words a codimension one point of a fiber over $Q \in \Spec(A)$.  In particular, $(R_{K(Q)})_x $ is normal and hence regular.  It follows that $R_x$ is also regular since $R_{K(Q)}$ is obtained from $R$ by killing a regular sequence and localizing.
\end{proof}

We now discuss the correspondence between divisors and maps in \autoref{set.A2Setting}.

\begin{lemma}\textnormal{\cite[2.8--2.11]{PatakfalviSchwedeZhangFFamilies}}
\label{lem.RelativeDivisorMapCorresponds}
Suppose that $A$ is an $F$-finite regular domain and suppose we have $A \subseteq R$ a flat finite type extension of rings with geometrically
%\footnote{Again we mean that the fibers are normal after any base change, including inseparable ones.}
normal fibers.  Then for every $R_{A^{1/p^e}}$-linear map
\[
\phi : R^{1/p^e} \to R_{A^{1/p^e}}
\]
which generates $\Hom_{R_{A^{1/p^e}}}(R^{1/p^e}, R_{A^{1/p^e}})$ at the generic point of every fiber, there exists a corresponding $\bZ_{(p)}$-divisor\footnote{A $\bQ$-divisor in which no denominators contain $p$.} on $\Spec R$
\[
\Delta_{\phi} \sim_{\bQ} -K_{R/A}
\]
which does not contain any fiber in its support.

Conversely, given an effective $\bZ_{(p)}$-divisor $\Delta \sim_{\bQ} -K_{R/A}$ on $\Spec R$ whose support does not contain any fiber, we can construct a map $\phi : R^{1/p^e} \to R_{A^{1/p^e}}$ such that $\Delta_{\phi} = \Delta$.
\end{lemma}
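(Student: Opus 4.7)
The plan is to reduce this to the well-known absolute Schwede correspondence between $p^{-e}$-linear maps on a normal $F$-finite domain and effective $\bZ_{(p)}$-divisors, applied to the normal integral domain $R_{A^{1/p^e}}$ obtained by base change, and then to translate the resulting absolute canonical divisor into the relative canonical $K_{R/A}$ using hypothesis \eqref{eq.FShriekCompatible}.

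First I would identify the Hom module as a divisorial sheaf. The extension $R_{A^{1/p^e}} \hookrightarrow R^{1/p^e}$ is a finite map of normal integral domains by \autoref{lem.BaseChangeIsNormalInSetting} (it is precisely the relative Frobenius), so Grothendieck duality for finite maps combined with reflexivity on a normal scheme produces an $R^{1/p^e}$-module isomorphism
\[
\Hom_{R_{A^{1/p^e}}}\bigl(R^{1/p^e}, R_{A^{1/p^e}}\bigr) \;\cong\; \O_{R}\bigl((p^e - 1)(-K_{R/A})\bigr),
\]
where I view the right-hand side on $\Spec R$ via the canonical homeomorphism $\Spec R^{1/p^e} \to \Spec R$. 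Condition \eqref{eq.FShriekCompatible} enters here: it guarantees $\omega_{A^{1/p^e}} \cong \omega_A$ compatibly with the Frobenius, so that the absolute canonical upstairs reduces cleanly to $K_{R/A}$ rather than being twisted by a sheaf pulled back from $A$. The flatness of $A \to R$ ensures that relative dualizing sheaves commute with the base change $A \subseteq A^{1/p^e}$, and normality of $R_{A^{1/p^e}}$ lets me work in codimension one without fear.

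Given the displayed isomorphism, the correspondence is now almost formal. A nonzero $\phi$ maps to a global section $s_\phi$ of $\O_{R}\bigl((p^e-1)(-K_{R/A})\bigr)$, whose divisor of zeroes $D_\phi$ is effective and satisfies $D_\phi \sim (p^e-1)(-K_{R/A})$; then $\Delta_\phi := \tfrac{1}{p^e-1} D_\phi$ is the desired effective $\bZ_{(p)}$-divisor with $\Delta_\phi \sim_{\bQ} -K_{R/A}$. The requirement that $\phi$ generate the Hom module at the generic point $\eta_Q$ of a fiber $\Spec R_{k(Q)}$ translates, under the isomorphism, into the statement that $s_\phi$ is a unit in the reflexive rank-one module at $\eta_Q$; since $\eta_Q$ is a codimension-one point of $\Spec R$, this is equivalent to $D_\phi$ (hence $\Delta_\phi$) not containing the closure of that fiber in its support.

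For the converse, given an effective $\bZ_{(p)}$-divisor $\Delta \sim_\bQ -K_{R/A}$ avoiding every fiber in its support, I would pick $e$ divisible enough that $D := (p^e-1)\Delta$ is integral, take the canonical section of $\O_R(D) \cong \O_R((p^e-1)(-K_{R/A}))$ cutting out $D$, and read off the corresponding $\phi$ via the inverse isomorphism; the support hypothesis on $\Delta$ is exactly what ensures this $\phi$ generates the Hom at every fiber's generic point. The main obstacle is the clean identification in the first display above: outside the smooth locus of $R/A$ the relative canonical is not a line bundle, and one must carefully check that the duality isomorphism produced on the smooth locus extends by reflexivity to all of $\Spec R$. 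This is the only place where both \eqref{eq.FShriekCompatible} and the geometric normality of the fibers of $A \to R$ are genuinely used.
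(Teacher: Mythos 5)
The paper offers no proof of this lemma at all --- it is imported verbatim from \cite[2.8--2.11]{PatakfalviSchwedeZhangFFamilies} --- and your sketch reconstructs essentially the argument of that source: Grothendieck duality for the finite relative Frobenius $R_{A^{1/p^e}} \hookrightarrow R^{1/p^e}$ identifies $\Hom_{R_{A^{1/p^e}}}(R^{1/p^e}, R_{A^{1/p^e}})$ with $F^e_*\O\bigl((1-p^e)K_{R/A}\bigr)$ (first on the relatively Gorenstein locus, then globally by reflexivity), after which maps correspond to sections and sections to effective divisors in the class of $(1-p^e)K_{R/A}$. One justification in your sketch is wrong, even though the conclusion it supports is correct: the generic point $\eta_Q$ of the fiber over $Q \in \Spec A$ is \emph{not} a codimension-one point of $\Spec R$ in general; by flatness its codimension equals $\height Q$, which can be arbitrarily large. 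The equivalence you need --- $\phi$ generates the Hom module at $\eta_Q$ if and only if $\eta_Q \notin \Supp D_\phi$ --- holds for a different reason: $R_{\eta_Q}$ is regular (it is flat over the regular local ring $A_Q$ with zero-dimensional regular closed fiber, the fiber being normal), so the rank-one reflexive module $\O\bigl((1-p^e)K_{R/A}\bigr)$ is invertible at $\eta_Q$, and ``generates'' there means exactly ``does not vanish,'' i.e.\ $\eta_Q$ does not lie in $\Supp D_\phi$, i.e.\ $\Supp \Delta_\phi$ does not contain the fiber over $Q$. With that repair, and noting that in the converse direction the integrality of $(p^e-1)\Delta$ for suitable $e$ uses precisely the $\bZ_{(p)}$-hypothesis on the denominators, your sketch agrees with the cited proof.
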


Finally, we recall the interaction between divisors and maps behaves under base change.  While not crucial for the following statement, in this paper we restrict ourselves to base changes which are either flat or restriction to a fiber followed by a flat base change, which is easier to work with than the generality of \cite{PatakfalviSchwedeZhangFFamilies}.

\begin{lemma}\textnormal{\cite[Lemma 2.21]{PatakfalviSchwedeZhangFFamilies}}
\label{lem.RelativeDivisorsAndBaseChange}
In the setting of \autoref{set.RelativeSetting} assume that $\Delta = \Delta_{\phi}$ is constructed as in \autoref{lem.RelativeDivisorMapCorresponds}.  For any regular $A$-algebra $B$ satisfying \eqref{eq.FShriekCompatible}, let $\pi : \Spec R_B \to \Spec R$ denote the canonical map.  Set $\phi_B \coloneqq \phi \otimes_{A^{1/p^e}} B^{1/p^e}$ to be the base changed map
\[
\phi_B : (R_B)^{1/p^e} = R^{1/p^e} \otimes_{A^{1/p^e}} B^{1/p^e} \to R_{A^{1/p^e}}\otimes_{A^{1/p^e}} B^{1/p^e} = R_{B^{1/p^e}}.
\]
In this case,
\[
\Delta_{\phi_B} = \pi^* \Delta = \pi^* \Delta_{\phi}.
\]
\end{lemma}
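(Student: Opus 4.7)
The plan is to prove $\Delta_{\phi_B}=\pi^*\Delta_\phi$ by verifying the equality codimension by codimension on $\Spec R_B$. Since both sides are $\bZ_{(p)}$-divisors on a normal integral scheme, they coincide as soon as their coefficients agree at every codimension-one point. The normality and integrality of $\Spec R_B$ follow from the same argument as in \autoref{lem.BaseChangeIsNormalInSetting}, applied to $B\to R_B$: the hypotheses transfer because $B$ is regular and $R\to R_B$ is a flat base change of $A\to B$, preserving geometrically normal fibers. Hence at each codimension-one point $Q$ of $\Spec R_B$ the local ring $(R_B)_Q$ is a DVR, and divisors can be read off from local equations.

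The main structural step is to reinterpret $\phi$ and $\phi_B$ as sections of line bundles. Using the hypothesis \eqref{eq.FShriekCompatible} together with relative Grothendieck duality, one identifies
\[
\sHom_{R_{A^{1/p^e}}}\bigl(R^{1/p^e},R_{A^{1/p^e}}\bigr)\cong F^e_*\sO_{\Spec R}\bigl((1-p^e)K_{R/A}\bigr),
\]
so that $\phi$ becomes a section $s_\phi$ of $\sO_{\Spec R}((1-p^e)K_{R/A})$ whose zero locus is the effective divisor $D_\phi=(p^e-1)\Delta_\phi$. The analogous identification on $\Spec R_B$ attaches $D_{\phi_B}=(p^e-1)\Delta_{\phi_B}$ to $\phi_B$.

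The key compatibility is that both the twisted relative canonical bundle and the section base-change correctly. Flatness of $A\to B$ implies flatness of $R\to R_B$, and then \eqref{eq.FShriekCompatible} on both $A$ and $B$ guarantees that the relative dualizing complex $\omega_{R/A}^\bullet$ is concentrated in a single degree and pulls back to $\omega_{R_B/B}^\bullet$. Consequently $\pi^*\sO_{\Spec R}((1-p^e)K_{R/A})\cong\sO_{\Spec R_B}((1-p^e)K_{R_B/B})$. Simultaneously, flat base change for Hom gives
\[
\sHom_{R_{A^{1/p^e}}}\bigl(R^{1/p^e},R_{A^{1/p^e}}\bigr)\otimes_{A^{1/p^e}}B^{1/p^e}\cong\sHom_{R_{B^{1/p^e}}}\bigl((R_B)^{1/p^e},R_{B^{1/p^e}}\bigr),
\]
so the section attached to $\phi_B=\phi\otimes1$ coincides with $\pi^*s_\phi$. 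Since taking zero divisors commutes with flat pullback, $D_{\phi_B}=\pi^*D_\phi$, and dividing by $p^e-1$ yields $\Delta_{\phi_B}=\pi^*\Delta_\phi$. Before concluding I would also note that $\phi_B$ generates Hom at the generic point of each new fiber (so that \autoref{lem.RelativeDivisorMapCorresponds} genuinely applies on the $B$-side), which follows from preservation of surjectivity under flat base change together with the identification of fibers over $Q'\in\Spec B$ with flat extensions of the corresponding fibers over the image of $Q'$ in $\Spec A$.

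The main obstacle I foresee is the duality-and-base-change compatibility packaged in the third paragraph: showing that the Hom-to-twisted-canonical identification commutes with base change. This relies crucially on \eqref{eq.FShriekCompatible} for both $A$ and $B$ and on the flat-base-change compatibility of the relative dualizing complex. Once these are in place, the equality of divisors reduces to a routine comparison of local generators at codimension-one points, where everything in sight is regular by \autoref{lem.BaseChangeIsNormalInSetting} and its analogue for $R_B$.
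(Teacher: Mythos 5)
The paper does not prove this lemma; it is quoted verbatim from \cite[Lemma 2.21]{PatakfalviSchwedeZhangFFamilies}, so there is no internal proof to compare against. Judged on its own terms, your outline is the standard one for the \emph{flat} case, but it has a genuine gap: you assume throughout that $A \to B$ is flat (``Flatness of $A\to B$ implies flatness of $R\to R_B$\dots since taking zero divisors commutes with flat pullback''). The lemma as stated allows \emph{any} regular $A$-algebra $B$ satisfying \eqref{eq.FShriekCompatible}, and the remark preceding it makes explicit that the intended base changes include restriction to a fiber, i.e.\ $B = k(Q)$ or $B=k(Q)^{1/p^d}$; this non-flat case is exactly what is used later in \autoref{thm.GeneralFiberImpliesMostSpecialFibers} to compare $\Delta_\gamma$ with $\Delta_Q = \Delta|_{R_{k(Q)^{1/p^d}}}$. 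For such $B$ your key step fails: $\pi$ is a regular closed immersion composed with a flat map, the pullback $\pi^*\Delta$ is not defined by flatness but only because codimension-one points of the fiber are regular points of $\Spec R$ (the unnumbered lemma after \autoref{lem.BaseChangeIsNormalInSetting}), and ``zero divisor of the restricted section equals restriction of the zero divisor'' requires an adjunction-type argument comparing $\omega_{R/A}$ restricted to the fiber with $\omega_{R_{k(Q)}/k(Q)}$. This is precisely where the hypotheses of geometrically normal fibers and of $\Supp\Delta$ containing no fiber do real work, and your proposal never engages with them.

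Two smaller imprecisions: $\sO_{\Spec R}((1-p^e)K_{R/A})$ is in general only a reflexive rank-one sheaf on the normal scheme $\Spec R$, not a line bundle (this is harmless for the codimension-one comparison, where the relevant local rings are regular, but you should say so); and \eqref{eq.FShriekCompatible} does not imply that $\omega^{\mydot}_{R/A}$ sits in a single degree --- that would need Cohen--Macaulay fibers, which normality does not give. The correct fix is to work with the reflexive sheaf $h^{-\dim}$ of the relative dualizing complex on the relative regular locus, whose complement has codimension at least two in every fiber, and carry out your codimension-one comparison there.
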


\begin{remark}
Frequently $B = A^{1/p^d}$ in which case the based changed map $\phi_B$ in \autoref{lem.RelativeDivisorsAndBaseChange} is simply
\[
\phi_{A^{1/p^d}} : \bigl(R_{A^{1/p^d}}\bigr)^{1/p^e} \to R_{A^{1/p^{e+d}}}.
\]
\end{remark}

\section{$F$-signature transformation for regular fibers}
\label{sec.YaoProofForPairs}

In this section, we will be concerned with the behavior of the $F$-signature under flat local extensions, building on the following result of Y.~Yao.

\begin{theorem}[\cite{YaoObservationsAboutTheFSignature}]
Suppose that $(A, \fram) \subseteq (R, \frn)$ is a flat local extension of excellent local rings of characteristic $p > 0$.  Then if $R/\fram R$ is regular, we have
\[
s(A) = s(R).
\]
\end{theorem}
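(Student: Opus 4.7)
The plan is to establish, for every $e \geq 0$, the length identity
\[
\ell_R\bigl(R/I_e(R)\bigr) \;=\; p^{en} \cdot \ell_A\bigl(A/I_e(A)\bigr),
\]
where $n = \dim R/\fram R$. Since $\dim R = \dim A + n$ by flatness, dividing both sides by $p^{e\dim R}$ and letting $e \to \infty$ immediately yields $s(R) = s(A)$ from the definition.

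First I would use that $F$-signature is preserved under completion of excellent local rings to reduce to the complete case. By the local criterion of flatness, any lifts $y_1, \ldots, y_n \in R$ of a regular system of parameters for $R/\fram R$ form a regular sequence on $R$, as do their $p^e$-th powers, and the quotient $\bar R := R/(y_1^{p^e}, \ldots, y_n^{p^e})$ is flat over $A$. In the complete setting, a Nakayama argument (the $\frn$-adic and $\fram R$-adic topologies on $\bar R$ coincide because the $\bar y_i$ are nilpotent) shows that $\bar R$ is in fact a \emph{free} $A$-module of rank $p^{en}$, with basis obtained by lifting the monomials $y^\alpha$, $0 \leq \alpha_i < p^e$, which form a $k$-basis of $(R/\fram R)/(\bar y_1^{p^e}, \ldots, \bar y_n^{p^e})$.

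The central step is the ideal identity
\[
I_e(R) \;=\; I_e(A)\,R + (y_1^{p^e}, \ldots, y_n^{p^e}),
\]
to be worked out via the characterization $a \in I_e(R) \iff \exists\ \frn\text{-primary } I,\ r \notin I,\ ar^{p^e} \in I^{[p^e]}$. For the containment $\supseteq$: each $y_i^{p^e}$ lies in $I_e(R)$ using the witness $(I,r) = (\frn, 1)$, while any $a \in I_e(A)$ with $A$-witness $(I, r)$ yields the $\frn$-primary $R$-witness $(IR + (y^{[p^e]}), r)$, where faithful flatness of $\bar R$ over $A$ ensures $r \notin IR + (y^{[p^e]})$. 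The reverse containment $\subseteq$ is the principal obstacle: writing $a \equiv \sum_\alpha c_\alpha y^\alpha \pmod{(y^{[p^e]})}$ through the free $A$-basis of $\bar R$, I must show that $a \in I_e(R)$ forces every $c_\alpha \in I_e(A)$. The strategy is, assuming for contradiction that $c_{\alpha_0} \notin I_e(A)$ for some $\alpha_0$, to use a pure map witnessing $\phi_{c_{\alpha_0}} \colon A \to F^e_* A$ together with the $A$-linear projection $\bar R \to A$ dual to the basis element $y^{\alpha_0}$ to construct a corresponding witness of purity for $\phi_a \colon R \to F^e_* R$, contradicting $a \in I_e(R)$.

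With the ideal identity in hand, the length computation is immediate because $\bar R$ is a free $A$-module of rank $p^{en}$:
\[
\ell_R\bigl(R/I_e(R)\bigr) \;=\; \ell_R\bigl(\bar R / I_e(A)\bar R\bigr) \;=\; \ell_A\bigl(A/I_e(A)\bigr) \cdot p^{en}.
\]
The hardest step is the reverse ideal containment, where the purity condition must be tracked carefully through the $A$-basis decomposition of $\bar R$ and coordinated with the Frobenius action; everything else is bookkeeping with flatness and regular sequences.
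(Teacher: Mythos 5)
Your overall skeleton --- reduce to the ideal identity $I_e(R) = I_e(A)R + (y_1^{p^e},\dots,y_n^{p^e})$ and then count lengths using flatness/freeness of $\bar R := R/(y_1^{p^e},\dots,y_n^{p^e})$ over $A$ --- is exactly the skeleton of the paper's proof, and your containment $\supseteq$, the freeness of $\bar R$ after completion, and the final length computation are all fine. The genuine gap is in the step you yourself flag as the principal obstacle, and it is not mere bookkeeping. You propose to show that $c_{\alpha_0}\notin I_e(A)$ forces $a\notin I_e(R)$ by combining a splitting $\sigma\colon F^e_*A\to A$ of $1\mapsto F^e_*c_{\alpha_0}$ (which does exist after completion, since pure maps out of a complete local ring split) with the coordinate projection of $F^e_*\bar R$ onto the basis vector $F^e_* y^{\alpha_0}$. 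But the resulting composite $F^e_*R\twoheadrightarrow F^e_*\bar R\to F^e_*A\xrightarrow{\ \sigma\ }A\subseteq R$ is only $A$-linear --- projection onto an $F^e_*A$-basis vector is not $R$-linear --- so at best it witnesses $A$-purity of $R\to F^e_*R$, $1\mapsto F^e_*a$. $A$-purity does not imply $R$-purity: one must check injectivity of $N\to F^e_*R\otimes_R N$ for $R$-modules $N$, and the surjection $F^e_*R\otimes_A N\twoheadrightarrow F^e_*R\otimes_R N$ goes the wrong way. So no contradiction with $a\in I_e(R)$ is actually obtained.

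The missing ingredient is a device for testing $R$-purity against $A$-module data, and this is precisely where the paper invests its effort: by Hochster--Huneke (the paper's Lemma 3.3), $E_R\cong H^n_{(y_1,\dots,y_n)}(R)\otimes_A E_A$ with socle generator $v=\left[\frac{1}{y_1\cdots y_n}\right]\otimes u$, and the map $1\mapsto F^e_*a$ is $R$-pure iff $F^e_*a\otimes v\neq 0$ in $F^e_*R\otimes_R E_R$. One then identifies $F^e_*R\otimes_R E_R$ with $F^e_*\bigl(H^n_{(y)}(R)\bigr)\otimes_{F^e_*A}\bigl(F^e_*A\otimes_A E_A\bigr)$, checks that the relevant element lies in the $A$-pure submodule $F^e_*\bar R\otimes_{F^e_*A}\bigl(F^e_*A\otimes_A E_A\bigr)$ (the purity statement of the paper's Lemma 3.2), and only at that point does your free-basis decomposition apply to read off the nonvanishing $\alpha_0$-component $F^e_*c_{\alpha_0}\otimes u$. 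Without identifying $E_R$ (or an equivalent device), the projection argument cannot be completed; your Fedder-type criterion with witnesses $(I,r)$ does not help here either, since transferring a witness from $R$ down to $A$ runs into the same linearity problem. If you import Lemma 3.3, your completion trick does buy something modest: $\bar R$ becomes genuinely free with a monomial basis, so the paper's ``the annihilator of $1\otimes(1\otimes u)$ is the expansion of $I_e(A)^{1/p^e}$'' step becomes a transparent coordinatewise computation --- but at that point the argument is the paper's.
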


\noindent
Our goal is to generalize the above result to the context of divisor pairs $(R, \Delta)$, for which we will first need to give a variation on the proof of the original statement. We begin with some preliminary lemmas.

\begin{lemma}
\label{lem.FlatLocalRegSeq}
Suppose that $(A,\fram) \subseteq (R, \frn)$ is a flat local extension of local rings.  If $x_1, \ldots, x_\delta \in R$ is a regular sequence on $R / \fram R$, then $ R / \langle x_1, \ldots, x_\delta \rangle$ is a flat $A$-algebra.  Moreover, $x_1, \ldots, x_\delta \in R$ are a regular sequence on $M \otimes_A R$ for any finitely generated $A$-module $M$, and lastly for any $t \geq 0$ the $R$-module inclusion
\[
R / \langle x_1^t, \ldots, x_\delta^t \rangle \xrightarrow{1 \mapsto \left[\frac{1}{x_1^t\cdots x_\delta^t} \right]} H^\delta_{\langle x_1, \ldots, x_\delta \rangle}(R)
\]
is pure as an inclusion of $A$-modules.
\end{lemma}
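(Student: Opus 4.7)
The plan is to prove (1) and (2) simultaneously by induction on $\delta$, and then deduce (3) from (2) via the direct limit description of top local cohomology.

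For the base case $\delta = 1$, I would first show that $R/x_1 R$ is $A$-flat by the local criterion of flatness: applying $- \otimes_A A/\fram$ to $0 \to x_1 R \to R \to R/x_1 R \to 0$ and using the $A$-flatness of $R$ identifies $\Tor_1^A(A/\fram, R/x_1 R)$ with $(x_1 R \cap \fram R)/\fram x_1 R$, which vanishes because the NZD hypothesis on $\bar x_1$ is precisely $(\fram R : x_1)_R = \fram R$. The long exact Tor sequence for the same SES (now with both $R$ and $R/x_1 R$ being $A$-flat) then yields $\Tor_1^A(A/\fram, x_1 R) = 0$, so $x_1 R$ is $A$-flat as well. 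Setting $K = \ker(x_1 : R \to R)$, tensoring $0 \to K \to R \to x_1 R \to 0$ with $A/\fram$ produces an exact sequence $0 \to K/\fram K \to R/\fram R \to x_1 R/\fram x_1 R \to 0$; since $K \subseteq \fram R$ (immediate from the NZD hypothesis), the image of $K/\fram K$ in $R/\fram R$ is zero, and injectivity of $K/\fram K \to R/\fram R$ forces $K/\fram K = 0$. Nakayama applied to the finitely generated $R$-module $K$ (using $\fram \subseteq \frn$) gives $K = 0$, so $x_1$ is a NZD on $R$. That $x_1$ remains a NZD on $M \otimes_A R$ then follows by tensoring $0 \to R \xrightarrow{x_1} R \to R/x_1 R \to 0$ with $M$ and invoking $A$-flatness of $R/x_1 R$.

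For the inductive step, I would apply the base case to the flat local extension $A \hookrightarrow R/x_1 R$ with the sequence $x_2, \ldots, x_\delta$, which remains regular on $(R/x_1 R)/\fram(R/x_1 R)$. This yields both the $A$-flatness of $R/\langle x_1, \ldots, x_\delta\rangle$ and the regularity of $x_2, \ldots, x_\delta$ on $M \otimes_A (R/x_1 R) = (M \otimes_A R)/x_1(M \otimes_A R)$; combined with the base case's statement for $x_1$, this establishes (1) and (2) for $\delta$.

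For (3), I would use that whenever $y_1, \ldots, y_\delta$ is a regular sequence on a module $N$, the \v{C}ech description gives $H^\delta_{\langle y_i\rangle}(N) = \varinjlim_n N/\langle y_1^n, \ldots, y_\delta^n\rangle$ with transition maps multiplication by $y_1 \cdots y_\delta$, injective precisely because the sequence is regular (the relevant colon ideal is trivial). Thus the stated arrow is the inclusion of the $t$-th term of the direct system, hence injective. Tensoring with any $A$-module $M$ and using that $\otimes_A$ commutes with direct limits and quotients gives $M \otimes_A H^\delta_{\langle x_i\rangle}(R) = H^\delta_{\langle x_i\rangle}(M \otimes_A R)$, and (2) guarantees that $x_1^t, \ldots, x_\delta^t$ is a regular sequence on $M \otimes_A R$, so the corresponding inclusion $(M \otimes_A R)/\langle x_1^t, \ldots, x_\delta^t\rangle \hookrightarrow H^\delta_{\langle x_i\rangle}(M \otimes_A R)$ is again injective --- the required purity. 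The main obstacle will be the base case $\delta = 1$, specifically the Nakayama step showing $K = 0$: it depends on first establishing the $A$-flatness of both $R/x_1 R$ and $x_1 R$, after which the NZD hypothesis on the closed fiber collapses $K/\fram K$.
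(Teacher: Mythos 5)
Your proof is correct and follows essentially the same route as the paper: for the flatness and regular-sequence assertions the paper simply cites \cite[Corollary 20.F]{MatsumuraCommutativeAlgebra} and \cite[Lemma 7.10]{HochsterHunekeFRegularityTestElementsBaseChange}, whose standard proofs are exactly your induction via the local criterion of flatness, and for purity the paper's one-line remark (check against finitely generated modules, where injectivity follows from the regular-sequence assertion) is precisely your direct-limit argument spelled out. No issues to report.
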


\begin{proof}
See \cite[Corollary 20.F, page 151]{MatsumuraCommutativeAlgebra} or \cite[Lemma 7.10]{HochsterHunekeFRegularityTestElementsBaseChange}.
For the final statement, note that it suffices to check purity after tensoring with finitely generated $A$-modules, where injectivity follows from the previous regular sequence assertion.
\end{proof}

The following was used in Hochster and Huneke's original study of $F$-regularity and base change.

\begin{lemma} \cite[Lemma 7.10]{HochsterHunekeFRegularityTestElementsBaseChange}
\label{lem.HHTensorLocalCMBaseChange}
Let $(A,\fram) \subseteq (R, \frn)$ be a flat local extension of local rings.  Suppose $R / \fram R$ is regular and $x_1, \ldots, x_\delta \in R$ give a regular system of parameters of $R / \fram R$.  If $E_A$ is an injective hull of $A / \fram$ over $A$ with socle generated by $u$, then $E_R = H^\delta_{\langle x_1, \ldots, x_\delta \rangle} (R) \otimes_A E_A$ is an injective hull of $R / \frn$ over $R$ with socle generated by $\Big[\frac{1}{x_1\cdots x_\delta}\Big] \otimes u$.
\end{lemma}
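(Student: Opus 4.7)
My plan is to verify the three standard characterizing properties of an injective hull of $R/\frn$: that $E_R$ is $\frn$-torsion, has one-dimensional $(R/\frn)$-socle with the specified generator, and is $R$-injective. The torsion is immediate: since $R/\fram R$ has maximal ideal $\langle \bar x_1, \ldots, \bar x_\delta\rangle$, one has $\frn = \fram R + \langle x_1, \ldots, x_\delta\rangle$; a simple tensor $z \otimes e \in E_R$ is killed by some power of $\langle x\rangle$ (as $z \in H^\delta_{\langle x\rangle}(R)$ is $\langle x\rangle$-torsion) and some power of $\fram$ (as $e \in E_A$ is in an Artinian module), hence by a power of $\frn$.

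For the socle, the preceding lemma shows each $R/\langle x_1^t, \ldots, x_\delta^t\rangle$ is $A$-flat, so the filtered colimit $H^\delta_{\langle x\rangle}(R) = \varinjlim_t R/\langle x_1^t, \ldots, x_\delta^t\rangle$ (with transition maps multiplication by $x_1 \cdots x_\delta$) is $A$-flat as well. Since $A$ is Noetherian, $\Hom_A(A/\fram, -)$ is expressible as a finite kernel and commutes with the $A$-flat tensor product $H^\delta_{\langle x\rangle}(R) \otimes_A -$. Noting that $\fram R$-annihilator agrees with $\fram$-annihilator on $R$-modules, I compute
\begin{align*}
\Hom_R(R/\fram R, E_R)
&= H^\delta_{\langle x\rangle}(R) \otimes_A \Hom_A(A/\fram, E_A) \\
&= H^\delta_{\langle x\rangle}(R) \otimes_A k = H^\delta_{\langle x\rangle}(R/\fram R),
\end{align*}
where the last equality uses the \v{C}ech--Koszul presentation of local cohomology and commutation of tensor product with filtered colimits. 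Since $R/\fram R$ is a regular local ring of dimension $\delta$ with regular system of parameters $x_1, \ldots, x_\delta$, the module $H^\delta_{\langle x\rangle}(R/\fram R)$ is an injective hull of $k$ over $R/\fram R$ whose socle is one-dimensional over $k$ with generator $[1/(\bar x_1 \cdots \bar x_\delta)]$. Tracing back through the identifications displays the socle of $E_R$ as one-dimensional, generated by $[1/(x_1 \cdots x_\delta)] \otimes u$.

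For $R$-injectivity, I write $E_R = \varinjlim_t R_t \otimes_A E_A$ with $R_t := R/\langle x_1^t, \ldots, x_\delta^t\rangle$. Over the Noetherian ring $R$, filtered colimits of injectives are injective, so it suffices to show each $R_t \otimes_A E_A$ is $R$-injective; being $\langle x^t\rangle$-torsion, this further reduces to $R_t$-injectivity. I would then verify Baer's criterion by exploiting the complete-intersection Koszul self-duality of $R_t$ over $R$, in combination with the $A$-flatness of $R_t$ (from the preceding lemma) and the $A$-injectivity of $E_A$, to produce $R_t$-linear extensions from ideals to the whole of $R_t$.

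The main obstacle is this last step. A naive reduction---arguing $R_t$ is finite free over $A$ and hence Frobenius, giving an isomorphism $R_t \otimes_A E_A \cong \Hom_A(R_t, E_A)$ of $R_t$-modules---fails in full generality, since finite-generation of $R_t$ over $A$ can break down (for instance when $A$ is a field and $R$ is not essentially of finite type over $A$). A more robust route is to use the Koszul resolution of $R_t$ over $R$ to express $\Ext^i_R(-, R_t \otimes_A E_A)$ via derived functors that vanish thanks to $A$-flatness and the injectivity of $E_A$; this is the content of the cited \cite[Lemma 7.10]{HochsterHunekeFRegularityTestElementsBaseChange}.
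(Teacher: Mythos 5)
The paper itself offers no proof of this lemma---it is quoted verbatim from Hochster--Huneke---so there is nothing in-paper to compare against; I am evaluating your argument on its own. Your verifications that $E_R$ is $\frn$-torsion and that $\Hom_R(R/\frn, E_R)$ is one-dimensional, generated by $\left[\frac{1}{x_1\cdots x_\delta}\right]\otimes u$, are sound, and since an $\frn$-torsion module with one-dimensional socle is automatically an essential extension of that socle, these two points correctly reduce the lemma to proving that $E_R$ is injective over $R$.

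That injectivity step is where there is a genuine gap, and the strategy you propose cannot be repaired. You reduce to showing that each $R_t\otimes_A E_A$ is $R$-injective and assert that, being $\langle x^t\rangle$-torsion, this ``further reduces to $R_t$-injectivity.'' That implication runs the wrong way: for a module killed by an ideal $I$, injectivity over $R/I$ does not imply injectivity over $R$ (only the converse holds, via applying $\Hom_R(R/I,-)$ to an injective). Worse, the intermediate claim is simply false: $R_t\otimes_A E_A$ is essentially never $R$-injective. Take $A=k$, $R=k[[x]]$, $E_A=k$; then $R_t\otimes_A E_A = k[x]/(x^t)$ is self-injective as an $R_t$-module but is not divisible, hence not injective, over $R$---only the colimit $H^1_{\langle x\rangle}(R)=k[x^{-1}]$ is. So ``a filtered colimit of injectives is injective'' is not the mechanism here, and the Koszul/Ext-vanishing route you gesture at in the final paragraph would be attempting to prove term-by-term a statement that is false term-by-term. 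A workable completion instead exploits the essentiality you have already established: embed $E_R$ into the true injective hull $E$ of $R/\frn$ and show the inclusion is onto by a length count, e.g.\ identify $\mathrm{ann}_{E_R}\bigl(\fram^s R+\langle x_1^t,\ldots,x_\delta^t\rangle\bigr)$ with $E_{A/\fram^s}(A/\fram)\otimes_{A/\fram^s} R/\bigl(\fram^s R+\langle x_1^t,\ldots,x_\delta^t\rangle\bigr)$ and check, using the $A$-flatness of $R/\langle x_1^t,\ldots,x_\delta^t\rangle$ and Matlis duality over the Artinian ring $A/\fram^s$, that its length equals $\ell_R\bigl(R/(\fram^s R+\langle x_1^t,\ldots,x_\delta^t\rangle)\bigr)$; as these ideals are cofinal with the powers of $\frn$, this forces $E_R=E$. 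That is, in substance, the argument of the cited \cite[Lemma 7.10]{HochsterHunekeFRegularityTestElementsBaseChange}.
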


Now, we give a new proof of Yao's result.

\begin{proof}[Proof of Theorem 3.1]
If $x_1, \ldots, x_\delta \in R$ give a regular system of parameters of $R / \fram R$, then by \autoref{lem.HHTensorLocalCMBaseChange} we have $E_R = H^\delta_{\langle x_1, \ldots, x_\delta \rangle} (R) \otimes_A E_A$ with socle generated by $v =  \left[\frac{1}{x_1\cdots x_\delta}\right] \otimes u$.  

Consider now $R^{1/p^e}\otimes_R E_R$, so that $I_e(R)^{1/p^e} = \Ann_{R^{1/p^e}}(1 \otimes v)$. With this observation in place, we claim the following equality of ideals of $R^{1/p^e}$:

\begin{claim}
$I_e(R)^{1/p^e} = \bigl( I_e(A)R + \bigl\langle x_1^{p^e}, \ldots, x_\delta^{p^e} \bigr\rangle \bigr)^{1/p^e}$
\end{claim}
\begin{proof}[Proof of claim]
We may identify $R^{1/p^e} \otimes_R H^\delta_{\langle x_1, \ldots, x_\delta \rangle} (R) = \bigl(H^\delta_{\langle x_1, \ldots, x_\delta \rangle} (R)\bigr)^{1/p^e}$ with $1 \otimes \left[\frac{1}{x_1\cdots x_\delta}\right] \leftrightarrow \left[\frac{1}{x_1^{p^e}\cdots x_\delta^{p^e}}\right]^{1/p^e} $. Using that
\[
\left( R \bigl/ \bigl\langle x_1^{p^e}, \ldots, x_\delta^{p^e} \bigr\rangle \right)^{1/p^e} \xrightarrow{1 \mapsto \left[\frac{1}{x_1^{p^e}\cdots x_\delta^{p^e}} \right]^{1/p^e}} \left( H^\delta_{\langle x_1, \ldots, x_\delta \rangle}(R) \right)^{1/p^e}
\]
is pure as an inclusion of $A^{1/p^e}$-modules, this gives further identifications
\[
\begin{array}{ccc}
R^{1/p^e}\otimes_R E_R & = & \left( H^\delta_{\langle x_1, \ldots, x_\delta \rangle}(R) \right)^{1/p^e} \otimes_{A^{1/p^e}} \left( A^{1/p^e} \otimes_A E_A\right)

\\

1 \otimes v & \leftrightarrow & \left[\frac{1}{x_1^{p^e}\cdots x_\delta^{p^e}} \right]^{1/p^e} \otimes (1 \otimes u)

\\ \mbox{} &&
\\

& \supseteq & \left( R \bigl/ \bigl\langle x_1^{p^e}, \ldots, x_\delta^{p^e} \bigr\rangle \right)^{1/p^e} \otimes_{A^{1/p^e}} \left( A^{1/p^e} \otimes_A E_A\right)

\\
 & \leftrightarrow & 1 \otimes (1 \otimes u).
\end{array}
\]

In particular, $\Ann_{R^{1/p^e}}(1 \otimes v) = \Ann_{R^{1/p^e}} \big(1 \otimes (1 \otimes u)\big)$. It is worth noting that this ideal contains the kernel of $R^{1/p^e} \to  \Bigl( R \bigl/ \bigl\langle x_1^{p^e}, \ldots, x_\delta^{p^e} \bigr\rangle \Bigr)^{1/p^e}$ and its image along this quotient homomorphism is the same as the annihilator of $1 \otimes (1 \otimes u)$ over $\left( R \bigl/ \bigl\langle x_1^{p^e}, \ldots, x_\delta^{p^e} \bigr\rangle \right)^{1/p^e}$.

On the other hand, since $A^{1/p^e} \to  \Bigl( R \bigl/ \bigl\langle x_1^{p^e}, \ldots, x_\delta^{p^e} \bigr\rangle \Bigr)^{1/p^e} $ is flat, it follows that the annihilator of $1 \otimes (1 \otimes u)$ over $\left( R \bigl/ \bigl\langle x_1^{p^e}, \ldots, x_\delta^{p^e} \bigr\rangle \right)^{1/p^e} $ is the extension  of 
\[
\Ann_{A^{1/p^e}}\left(1 \otimes u \in A^{1/p^e}\otimes_A E_A\right)=I_e(A)^{1/p^e}.
\]

Putting everything together proves the claim.
\end{proof}

 %In other words, we have shown $I_e(R)^{1/p^e} = \bigl( I_e(A)R + \bigl\langle x_1^{p^e}, \ldots, x_\delta^{p^e} \bigr\rangle \bigr)^{1/p^e}$.  
 
Thus, using the flatness of $A \to R \bigl/ \bigl\langle x_1^{p^e}, \ldots, x_\delta^{p^e} \bigr\rangle $ once again, it follows
\begin{align*}
\ell_R\left(\frac{R}{I_e(R)}\right) &= \ell_R\left(\frac{R}{ I_e(A)R + \bigl\langle x_1^{p^e}, \ldots, x_\delta^{p^e} \bigr\rangle}\right) = \ell_A \left( \frac{A}{I_e(A)}\right) \ell_R \left( \frac{R}{\fram R + \bigl\langle x_1^{p^e}, \ldots, x_\delta^{p^e} \bigr\rangle}\right) \\
&=  p^{e \delta} \ell_A \left( \frac{A}{I_e(A)}\right)\ell_R \left( \frac{R}{\fram R + \langle x_1, \ldots, x_\delta \rangle}\right) = p^{e \delta} \ell_A \left( \frac{A}{I_e(A)}\right).
\end{align*}
Since $\dim R = \dim A + \delta$, the desired equality now follows after dividing by $p^{e \dim R}$ and taking limits.
\end{proof}

We now generalize the above proof to the context of pairs.  We break off the main technical step into a lemma.

\begin{lemma}
\label{lem.IeTransformADtoRD}
Suppose that $(A, \fram) \subseteq (R, \frn)$ is a flat local extension of normal local rings of characteristic $p > 0$ and write $f : \Spec R \to \Spec A$ for the induced map.
For any effective Weil $D$ on $\Spec A$ and $e > 0$, define
\begin{align*}
I_e(A, D) & = \langle a \in A \mid A \to A(D)^{1/p^e} \mbox{ with } 1 \mapsto a^{1/p^e} \mbox{ is not $A$-pure} \rangle \\
I_e(R, f^* D) & = \langle r \in R \mid R \to R(f^* D)^{1/p^e} \mbox{ with } 1 \mapsto r^{1/p^e} \mbox{ is not $R$-pure} \rangle.
\end{align*}

Then if $R/\fram R$ is regular,
\[
\ell_R \left( \frac{R}{I_e(R, f^* D)} \right) =  p^{e(\dim R - \dim A)} \cdot \ell_A\left( \frac{A}{I_e(A, D)} \right).
\]
\end{lemma}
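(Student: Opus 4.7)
The plan is to mimic the new proof of Yao's theorem above, substituting $A(D)$ for $A$ and $R(f^*D)$ for $R$ at the appropriate places. Set $\delta = \dim R - \dim A$ and fix $x_1, \ldots, x_\delta \in R$ lifting a regular system of parameters of $R/\fram R$; by the first lemma of this section $\underline{x}$ is then a regular sequence on $R$ and on $M \otimes_A R$ for every finitely generated $A$-module $M$, and $R/\langle x_i^{p^e}\rangle$ is $A$-flat. Since $E_A$ is essential over its socle $Au$, purity of the map $A \to A(D)^{1/p^e}$, $1 \mapsto a^{1/p^e}$, is equivalent to $a^{1/p^e} \otimes u \neq 0$ in $A(D)^{1/p^e} \otimes_A E_A$; rewriting $a^{1/p^e} \otimes u = a^{1/p^e}\cdot(1^{1/p^e} \otimes u)$ via the natural $A^{1/p^e}$-module structure, one obtains
\[
I_e(A,D)^{1/p^e} = \Ann_{A^{1/p^e}}\!\bigl(1^{1/p^e} \otimes u \in A(D)^{1/p^e} \otimes_A E_A\bigr).
\]
Combined with the Hochster--Huneke description $E_R = H^\delta_{\underline{x}}(R) \otimes_A E_A$ from \autoref{lem.HHTensorLocalCMBaseChange} (socle generated by $v = [1/\prod x_i] \otimes u$), the analogous identification gives $I_e(R, f^*D)^{1/p^e} = \Ann_{R^{1/p^e}}(1^{1/p^e} \otimes v)$ inside $R(f^*D)^{1/p^e} \otimes_R E_R$.

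The main computation is to rewrite this larger tensor product. Since top local cohomology is a cokernel and hence commutes with arbitrary tensor products,
\[
R(f^*D)^{1/p^e} \otimes_R E_R \;=\; H^\delta_{\underline{x}}(R(f^*D))^{1/p^e} \otimes_A E_A,
\]
under which $1^{1/p^e} \otimes v$ corresponds to $[1/\prod x_i^{p^e}]^{1/p^e} \otimes u$, where $1 \in R(f^*D)$ is in the numerator. Assuming the identification $R(f^*D) \cong A(D) \otimes_A R$ as $R$-modules (the main technical input, discussed below), the first lemma shows $\underline{x}$ is a regular sequence on $R(f^*D)$ and supplies an $A$-pure inclusion
\[
R(f^*D)/\langle x_i^{p^e}\rangle R(f^*D) \;\hookrightarrow\; H^\delta_{\underline{x}}(R(f^*D)), \qquad [m] \mapsto [m/\prod x_i^{p^e}].
\]
Applying $F^e_*$ and $\otimes_A E_A$ preserves purity, reducing the target annihilator to one over $\bigl(A(D) \otimes_A R/\langle x_i^{p^e}\rangle\bigr)^{1/p^e} \otimes_A E_A$; flatness of $A \to R/\langle x_i^{p^e}\rangle$ then yields $I_e(R, f^*D) = I_e(A,D) R + \langle x_1^{p^e}, \ldots, x_\delta^{p^e}\rangle$, from which the length identity
\[
\ell_R(R/I_e(R, f^*D)) \;=\; \ell_A(A/I_e(A,D)) \cdot \ell_R\bigl(R/(\fram R + \langle x_i^{p^e}\rangle)\bigr) \;=\; p^{e\delta}\, \ell_A(A/I_e(A,D))
\]
falls out as in Yao's proof, using that $\underline{x}$ is a regular system of parameters on the $\delta$-dimensional regular ring $R/\fram R$.

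The main obstacle is the identification $R(f^*D) = A(D) \otimes_A R$ as $R$-modules, which is what permits us to transfer the first lemma's conclusions to the divisorial module $R(f^*D)$. The natural map $A(D) \otimes_A R \hookrightarrow R(f^*D)$ is injective (both sit inside $K(R)$, and pullback of an effective Weil divisor preserves the divisorial inequality), and I would verify surjectivity by showing the tensor product is already $S_2$ over $R$, hence equal to its reflexive hull $R(f^*D)$. Concretely, for $Q \subseteq R$ prime with $P := Q \cap A$, the flat base-change depth formula
\[
\depth(A(D) \otimes_A R)_Q \;=\; \depth A(D)_P + \depth(R_Q/PR_Q),
\]
combined with the $S_2$ property of $A(D)$ on normal $A$ together with the Cohen--Macaulay behavior of the fibers forced by the regularity of $R/\fram R$, yields the required depth bound $\geq \min(2,\height Q)$ at every $Q$.
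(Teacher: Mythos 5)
Your proof follows the paper's argument essentially verbatim: the same socle/annihilator reformulation of purity, the same identification of $R(f^*D)^{1/p^e}\otimes_R E_R$ via \autoref{lem.HHTensorLocalCMBaseChange} and the colimit description of top local cohomology, and the same flat-base-change computation of the annihilator followed by the length count, so the core of the proposal is correct and is the paper's proof. The one place you go beyond the paper is in justifying the identification $R(f^*D)\cong A(D)\otimes_A R$, which the paper simply asserts; your strategy (injectivity plus $S_2$, hence reflexivity, of the tensor product over the normal ring $R$) is the right one, but the specific input you cite --- the ``Cohen--Macaulay behavior of the fibers forced by the regularity of $R/\fram R$'' --- is not available: regularity of the closed fiber does not imply Cohen--Macaulayness of the remaining fibers of a flat local (non-finite-type) extension. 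Fortunately you do not need it: in the depth formula the only delicate case is $\height (Q\cap A)=1$, where $A(D)_{Q\cap A}$ is free of depth $1$ over the DVR $A_{Q\cap A}$ and the fiber $R_Q/(Q\cap A)R_Q=R_Q/tR_Q$ is the quotient of the normal (hence $S_2$) local domain $R_Q$ by a single nonzerodivisor, so it is $S_1$ and has positive depth whenever it has positive dimension; this already gives $\depth\bigl(A(D)\otimes_A R\bigr)_Q\geq\min(2,\height Q)$ at every $Q$. With that repair your argument is complete and coincides with the paper's.
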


\begin{proof}
If $x_1, \ldots, x_\delta \in R$ give a regular system of parameters of $R / \fram R$,  we have  that $E_R = H^\delta_{\langle x_1, \ldots, x_\delta \rangle} (R) \otimes_A E_A$ with socle generated by $v = \left[\frac{1}{x_1\cdots x_\delta}\right] \otimes u$.  Consider now $R(f^*D)^{1/p^e}\otimes_R E_R$, so that $I_e(R, f^*D)^{1/p^e} = \Ann_{R^{1/p^e}}(1 \otimes v)$.  Using that $R(f^*D) = R\otimes_A A(D)$ and the same identifications made in the proof above, we see that
\[
\begin{array}{ccc}
R(f^*D)^{1/p^e}\otimes_R E_R & = & \left( H^\delta_{\langle x_1, \ldots, x_\delta \rangle}(R) \right)^{1/p^e} \otimes_{A^{1/p^e}} \left( A(D)^{1/p^e} \otimes_A E_A\right)

\\

1 \otimes v & \leftrightarrow & \left[\frac{1}{x_1^{p^e}\cdots x_\delta^{p^e}} \right]^{1/p^e} \otimes (1 \otimes u)

\\ \mbox{} &&
\\

& \supseteq &\Bigl( R \bigl/ \bigl\langle x_1^{p^e}, \ldots, x_\delta^{p^e} \bigr\rangle \Bigr)^{1/p^e}  \otimes_{A^{1/p^e}} \left( A(D)^{1/p^e} \otimes_A E_A\right)

\\
 & \leftrightarrow & 1 \otimes (1 \otimes u).
\end{array}
\]
But since $A^{1/p^e} \to  \Bigl( R \bigl/ \bigl\langle x_1^{p^e}, \ldots, x_\delta^{p^e} \bigr\rangle \Bigr)^{1/p^e} $ is flat, it follows that the annihilator of $1 \otimes (1 \otimes u)$ over $\Bigl( R \bigl/ \bigl\langle x_1^{p^e}, \ldots, x_\delta^{p^e} \bigr\rangle \Bigr)^{1/p^e} $ is the expansion  of $\Ann_{A^{1/p^e}}\left(1 \otimes u \in A(D)^{1/p^e}\otimes_A E_A\right)  = I_e(A,D)^{1/p^e}$.  In other words, we have shown $I_e(R, f^*D)^{1/p^e} = \bigl( I_e(A,D)R + \bigl\langle x_1^{p^e}, \ldots, x_\delta^{p^e} \bigr\rangle \bigr)^{1/p^e}$.  Thus, using the flatness of $A \to  R\bigl/\bigl\langle x_1^{p^e}, \ldots, x_\delta^{p^e} \bigr\rangle$ once again, it follows
\begin{align*}
\ell_R\left(\frac{R}{I_e(R,f^*D)}\right) &= \ell_R\left(\frac{R}{ I_e(A,D)R + \bigl\langle x_1^{p^e}, \ldots, x_\delta^{p^e} \bigr\rangle}\right) \\
&= \ell_A \left( \frac{A}{I_e(A,D)}\right) \ell_R \left( \frac{R}{\fram R + \bigl\langle x_1^{p^e}, \ldots, x_\delta^{p^e} \bigr\rangle}\right) \\
& = p^{e \delta} \ell_A \left( \frac{A}{I_e(A,D)}\right)\ell_R \left( \frac{R}{\fram R + \langle x_1, \ldots, x_\delta \rangle}\right) = p^{e \delta} \ell_A \left( \frac{A}{I_e(A,D)}\right)
\end{align*}
as desired.
\end{proof}

We now can prove the main result of the section.

\begin{theorem}
\label{thm.FsignatureStableFlatMapRegularFiber}
Suppose that $(A, \fram) \subseteq (R, \frn)$ is a flat local extension of normal local rings of characteristic $p > 0$ and write $f : \Spec R \to \Spec A$ the induced map.  Suppose further that $\Delta \geq 0$ is a $\bQ$-divisor on $\Spec A$.  Then if $R/\fram R$ is regular, we have
\[
s(A, \Delta, \fram) = s(R, f^* \Delta, \frn).
\]
\end{theorem}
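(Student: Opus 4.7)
The plan is to apply the exact length identity of \autoref{lem.IeTransformADtoRD} to the sequence $D_e := \lceil (p^e - 1)\Delta \rceil$ on $\Spec A$ and then use the perturbation principle of \autoref{lem:perturb} on the $R$-side, so that the two $F$-signature limits line up term-by-term.  Setting $\delta := \dim R - \dim A$, the lemma yields
\[
\ell_R\bigl( R / I_e(R, f^* D_e) \bigr) \;=\; p^{e \delta} \cdot \ell_A\bigl( A / I_e(A, D_e) \bigr)
\]
for every $e > 0$.  Dividing by $p^{e \dim R} = p^{e \dim A} \cdot p^{e \delta}$ and letting $e \to \infty$ gives
\[
\lim_{e \to \infty} p^{-e \dim R} \ell_R\bigl( R / I_e(R, f^* D_e)\bigr) \;=\; s(A, \Delta),
\]
so everything reduces to matching this limit with $s(R, f^* \Delta)$.

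The only discrepancy to control is that $f^* D_e = f^* \lceil (p^e - 1) \Delta \rceil$ is in general not equal to $\lceil (p^e - 1) f^* \Delta \rceil$.  To bound it, decompose $\Delta = \sum_i a_i D_i$ into prime components with $a_i > 0$, and write each $f^* D_i = \sum_j m_{ij} E_{ij}$ as a sum of prime Weil divisors on $\Spec R$ (flat pullback being well-defined since $f$ is flat and $R$ is normal).  Both $f^* D_e - (p^e-1) f^* \Delta$ and $\lceil (p^e-1) f^* \Delta \rceil - (p^e-1) f^* \Delta$ are effective with coefficients strictly less than $1$ along each $E_{ij}$, so
\[
-E \;\leq\; f^* D_e - \lceil (p^e - 1) f^* \Delta \rceil \;\leq\; E
\]
uniformly in $e$, where $E := \sum_{i,j} E_{ij}$ is a fixed effective reduced divisor on $\Spec R$.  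Since $R$ is a Noetherian normal local domain, $E$ is bounded above by $\Div(g)$ for some nonzero $g \in R$, hence by an effective Cartier divisor $C$; \autoref{lem:perturb} therefore applies to the sequence $\{f^* D_e\}$ (relative to the reference sequence $\{\lceil (p^e - 1) f^* \Delta \rceil\}$) and gives
\[
s(R, f^* \Delta) \;=\; \lim_{e \to \infty} p^{-e \dim R} \ell_R \bigl( R / I_e(R, f^* D_e) \bigr).
\]

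Combining the two displays yields $s(R, f^* \Delta) = s(A, \Delta)$, which is the claim.  The main technical hurdle is the middle paragraph: verifying that the pullback-vs-rounding discrepancy is uniformly bounded above and below by a single effective Cartier divisor independent of $e$, so that \autoref{lem:perturb} legitimately converts the a priori formal limit $\lim_e p^{-e\dim R}\ell_R(R/I_e(R, f^*D_e))$ into the genuine $F$-signature $s(R, f^*\Delta)$.  Once that is in place, the rest is a direct bookkeeping exercise combining \autoref{lem.IeTransformADtoRD} and \autoref{lem:perturb}.
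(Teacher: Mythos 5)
Your argument is correct in substance and rests on the same two pillars as the paper's proof --- the exact length identity of \autoref{lem.IeTransformADtoRD} and the perturbation principle of \autoref{lem:perturb} --- but you handle the rounding-versus-pullback discrepancy differently. The paper never compares $f^*\lceil (p^e-1)\Delta\rceil$ with $\lceil (p^e-1)f^*\Delta\rceil$ directly; instead it runs the argument twice, once with $D=\lfloor p^e\Delta\rfloor$ (using $f^*\lfloor p^e\Delta\rfloor \leq \lfloor p^e f^*\Delta\rfloor$ and the monotonicity of $\ell_R(R/I_e(R,D))$ in $D$ to get $s(R,f^*\Delta)\leq s(A,\Delta)$) and once with $D=\lceil p^e\Delta\rceil$ for the reverse inequality. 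You instead prove the equality in one pass by exhibiting a single effective Cartier divisor bounding $f^*D_e - \lceil (p^e-1)f^*\Delta\rceil$ uniformly in $e$, which is arguably cleaner since it avoids invoking monotonicity of the lengths, at the cost of the explicit bound. That bound is where you have a small slip: writing $\Delta=\sum_i a_iD_i$ and $f^*D_i=\sum_j m_{ij}E_{ij}$, the coefficient of $E_{ij}$ in $f^*D_e-(p^e-1)f^*\Delta$ is $\bigl(\lceil (p^e-1)a_i\rceil-(p^e-1)a_i\bigr)m_{ij}$, which lies in $[0,m_{ij})$ rather than $[0,1)$; so the reduced divisor $E=\sum_{i,j}E_{ij}$ does not suffice when some $m_{ij}\geq 2$. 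The fix is immediate --- take $E=\sum_{i,j}m_{ij}E_{ij}=f^*\bigl(\sum_i D_i\bigr)$, which still bounds both discrepancies uniformly in $e$ and is still dominated by $\Div_R(g)$ for some $0\neq g$ --- and with that correction your application of \autoref{lem:perturb} is legitimate and the proof goes through.
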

\begin{proof}
We will first apply \autoref{lem.IeTransformADtoRD} to $D = \lfloor p^e \Delta \rfloor$.  We see that $f^* D = f^* \lfloor p^e \Delta \rfloor \leq \lfloor p^e f^* \Delta \rfloor$.  Hence, recalling that $d = \dim R$ and applying both \autoref{lem:perturb}  and \autoref{lem.IeTransformADtoRD},
\begin{align*}
s(R, f^* \Delta) & =  {\lim_{e \rightarrow \infty} {\frac{1}{p^{de}}  \ell_R\Big(R\big/I_e\big(R, \lceil (p^e-1) f^* \Delta\rceil \big)\Big)}} = {\lim_{e \rightarrow \infty} {\frac{1}{p^{de}}  \ell_R\Big(R\big/I_e\big(R, \lfloor p^e f^* \Delta\rfloor \big)\Big)}} \\
& \leq  {\lim_{e \rightarrow \infty} {\frac{1}{p^{de}} \cdot \ell_R\Big(R\big/I_e\big(R, f^* \lfloor p^e \Delta\rfloor \big)\Big) }} =  { \lim_{e \rightarrow \infty} {\frac{1}{p^{(d - \delta)e}} \ell_A\Big(A\big/I_e\big(A, \lfloor p^e \Delta \rfloor\big)\Big) }}\\
& = s(A, \Delta).
\end{align*}
\begin{comment}
\[
{\def\arraystretch{2.2}
\begin{array}{rl}
s(R, f^* \Delta) = & {\displaystyle \lim_{e \rightarrow \infty} {1 \over p^{de}} \cdot \ell_R\big(R/I_e(R, \lceil (p^e-1) f^* \Delta\rceil )\big)} \\
=
& {\displaystyle \lim_{e \rightarrow \infty} {1 \over p^{de}} \cdot \ell_R\big(R/I_e(R, \lfloor p^e f^* \Delta\rfloor )\big)} \\
\leq & {\displaystyle \lim_{e \rightarrow \infty} {1 \over p^{de}} \cdot \ell_R\big(R/I_e(R, f^* \lfloor p^e \Delta\rfloor )\big) }\\
= & {\displaystyle \lim_{e \rightarrow \infty} {1 \over p^{d - \delta}} \cdot \ell_A\big(A/I_e(A, \lfloor p^e \Delta \rfloor)\big) }\\
= & s(A, \Delta).
\end{array}
}
\]
\end{comment}
 On the other hand, if we choose $D = \lceil p^e \Delta \rceil$, then $f^* D = f^* \lceil p^e \Delta \rceil \geq \lceil p^e f^* \Delta \rceil$ and arguing as above gives
$s(R, f^* \Delta) \geq s(A, \Delta)$.
This completes the proof.
\end{proof}

{%\color{blue}
We also address Hilbert--Kunz multiplicity under flat extensions (with regular fibers).  There is little work to do here since Kunz proved the result (even before a limit is taken).  %We first recall the following result of Hanes.
%\begin{theorem}\textnormal{(\cite[Theorem 5.2.6]{HanesThesis})}  Let $(R, \fram) \hookrightarrow (S, \fran)$ be a flat local extension of rings of positive characteristic.  Then
%\[
%\eHK(R) \leq \eHK(S).
%\]
%\end{theorem}
%But we need equality when the fiber is regular.  We assume this is known to experts but we do not know a reference.
\begin{theorem}\textnormal{(\cite[Proposition 3.9b]{KunzOnNoetherianRingsOfCharP})}
\label{thm.HKRegularFibers}
Let $(A, \fram) \hookrightarrow (R, \fran)$ be a flat local extension of rings of positive characteristic.  Further suppose that $R/\fram R$ is regular.  Then
\[
\eHK(A) =\eHK(R).
\]
\end{theorem}
%\begin{proof}
%Suppose that $x_1, \dots, x_{\delta}$ are elements of $R$ whose images become a regular system of %parameters of $R/\fram R$.
%Note that $R \to S/(x_1^{[p^e]}, \dots, x_{\delta}^{[p^e]})$ is flat by \autoref{lem.FlatLocalRegSeq}.

%Now, we know that $A/\fram^{[p^e]}$ has a filtration of length $\ell(A/\fram^{[p^e]})$ with $A/\fram$-quotients.  Hence, tensoring up with $R/(x_1^{[p^e]}, \dots, x_{\delta}^{[p^e]})$, we see that $R/\fran^{[p^e]}$ has a filtration of length $\ell(A/\fram^{[p^e]})$ with $R/(\fram R + (x_1^{[p^e]}, \dots, x_{\delta}^{[p^e]}))$-quotients.  But each $R/(\fram R + (x_1^{[p^e]}, \dots, x_{\delta}^{[p^e]}))$ has length $p^{e\delta}$ since $R/\fram R$ is regular.  Hence
%\[
%\ell(R/\fran^{[p^e]}) = p^{e\delta} \ell(A/\fram^{[p^e]})
%\]
%and the result follows.%
%\end{proof}

}

\section{$F$-signature and Hilbert--Kunz multiplicity of general fibers}
\label{sec.FsignatureOfGeneralFibers}

%Now we need to also show that if $A \subseteq R$ is an extension of rings and the geometric generic fiber has $F$-signature $\geq \lambda$, then most of the other closed fibers have $F$-signature $\geq \lambda$.  $A$ is integral, so we may assume that $A$ is regular, and that $R/A$ is flat.  Most here probably means for a \emph{very} general choice of fiber, outside a countable union.  So we'll probably need to assume uncountable residue field.  Some similar things, with just one splitting, are done in Schwede-Zhang, and Patakfalvi-Schwede-Zhang.

Before proving Bertini-type theorems, we need one more result.  We need to show that if $A \subseteq R$ is a finite type extension of rings such that the perfectified generic fiber has $F$-signature greater than $\lambda$, then so do \emph{most} of the closed fibers.

\begin{setting}
\label{set.A2Setting}
We assume that $A \subseteq R$ is a flat finite type morphism of Noetherian $F$-finite integral domains with fraction fields $K = \Frac(A) \subseteq L = \Frac(R)$.  Suppose further that $A$ is regular and that $A \subseteq R$ has geometrically normal fibers.  Further assume that $\Delta \geq 0$ is a $\bQ$-divisor on $\Spec R$ whose support does not contain any fiber.
%Suppose further that $R_{A^{1/p^e}}$ is a domain for every $e > 0$
\end{setting}
We will not universally assume this setting in this section, but we will always be able to reduce to it.
In order to motivate the main result of this section, we first give an easy proof of a weaker statement. 

\begin{proposition}
\label{prop.A2ForVeryGeneral}
In the setting of \autoref{set.A2Setting}, further suppose that $A$ is finite type over an uncountable algebraically closed field of characteristic $p > 0$.  If
\[
s(R_{K^{\infty}, x}) \geq \lambda
\]
for all $x \in \Spec R_{K^{\infty}}$, then for a very general\footnote{Meaning outside a countable union of proper closed subsets of $\Spec A$} closed point $Q \in \Spec A$ with residue field $k(Q)$,
\[
s(R_{k(Q),x}) \geq \lambda
\]
for all $x \in \Spec R_{k(Q)}$.
\end{proposition}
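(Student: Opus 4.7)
The plan is to spread out the free-summand data that witnesses $s(R_{K^\infty}) \geq \lambda$ from the perfectified generic fiber to a dense open in $\Spec A$, and then intersect such opens over a countable collection to produce the very general set. First I would apply \autoref{thm.globaldivisorfsig} to the $F$-finite normal domain $R_{K^\infty}$ (which is normal by \autoref{lem.BaseChangeIsNormalInSetting}): the pointwise hypothesis translates into
\[
s(R_{K^\infty}) \;=\; \min_{x \in \Spec R_{K^\infty}} s(R_{K^\infty,x}) \;=\; \lim_{e \to \infty} \frac{\frk_{R_{K^\infty}}\bigl(R_{K^\infty}^{1/p^e}\bigr)}{p^{e\gamma_\infty}} \;\geq\; \lambda,
\]
where $\gamma_\infty = \dim R_{K^\infty} = \dim R_K$ since $K^\infty$ is perfect. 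Consequently, for every $\epsilon > 0$ and every $e \geq e_0(\epsilon)$ there is an $R_{K^\infty}$-linear surjection $\psi^{(e)}\colon R_{K^\infty}^{1/p^e} \twoheadrightarrow R_{K^\infty}^{\oplus a_e}$ with $a_e \geq (\lambda - \epsilon)p^{e\gamma_\infty}$, equivalently, finitely many maps $\phi_i$ together with splitting elements $s_j \in R_{K^\infty}^{1/p^e}$.

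Next I would descend this data to a finite level of the perfect-closure tower. Since $K^\infty = \varinjlim_n K^{1/p^n}$ and the data defining $\psi^{(e)}$ is finite, everything already lies over $R_{K^{1/p^n}}$ for some $n = n(e)$. Spreading out, the cokernel of the corresponding $R_{A^{1/p^n}}$-linear map is a finitely generated module that vanishes after inverting all nonzero elements of $A^{1/p^n}$, hence is supported on a proper closed subscheme of $\Spec A^{1/p^n}$. Over its complement $V_e \subseteq \Spec A^{1/p^n}$ we obtain a genuine surjection of $R_{A^{1/p^n}}$-modules; via the natural homeomorphism $\Spec A \xrightarrow{\sim} \Spec A^{1/p^n}$ (Frobenius on the base is radicial), this gives a dense open $U_e \subseteq \Spec A$.

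For any closed point $Q \in U_e$, the residue field $k(Q) = k$ is perfect, so $k(Q)^{1/p^m} = k(Q)$ for all $m$ and condition \eqref{eq.FShriekCompatible} is automatic. Applying \autoref{lem.RelativeDivisorsAndBaseChange} with $B = k(Q)$ to base-change the spread-out surjection to the fiber at $Q$ produces a surjection $R_{k(Q)}^{1/p^e} \twoheadrightarrow R_{k(Q)}^{\oplus a_e}$. Since flatness with equidimensional fibers gives $\gamma_Q = \dim R_{k(Q)} = \dim R - \dim A = \gamma_\infty$, we conclude
\[
\frac{\frk_{R_{k(Q)}}\bigl(R_{k(Q)}^{1/p^e}\bigr)}{p^{e\gamma_Q}} \;\geq\; \lambda - \epsilon
\]
for every closed $Q \in U_e$.

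Finally, taking $U := \bigcap_{m \geq 1}\bigcap_{e \geq e_0(1/m)} U_e^{(1/m)}$ yields a countable intersection of dense opens in $\Spec A$, hence a very general subset (using that the base field is uncountable). For each closed $Q \in U$ the limit gives $s(R_{k(Q)}) = \lim_e \frk_{R_{k(Q)}}(R_{k(Q)}^{1/p^e})/p^{e\gamma_Q} \geq \lambda$, so by \autoref{thm.globaldivisorfsig} applied now to $R_{k(Q)}$ we obtain $s(R_{k(Q),x}) \geq \lambda$ for every $x \in \Spec R_{k(Q)}$. The main obstacle is the base-change tracking in the spread-out step: one must verify that the generic-fiber surjection really descends to a surjection on each closed fiber in $U_e$, which requires combining the standard cokernel-support argument with the formalism of \autoref{subsec.DivisorsAndFamilies} to correctly align the iterated Frobenius twists of $R$ as the base varies through $A \subseteq A^{1/p^n} \subseteq K^{1/p^n}$ and back down to $k(Q)$.
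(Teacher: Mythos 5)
Your proposal is correct and follows essentially the same route as the paper: both use the global characterization of $F$-signature from \cite{DeStefaniPolstraYaoGlobalizingFinvariants} to produce, for each $e$, a surjection $R_{K^\infty}^{1/p^e}\twoheadrightarrow R_{K^\infty}^{\oplus a_e}$ with $a_e/p^{e\gamma}$ tending to at least $\lambda$, descend it to a finite level of the perfect-closure tower and spread it out over a dense open $U_e\subseteq \Spec A$ (the content of \autoref{lem.SplitAtPerfectionImpliesSplit}, which you re-derive inline), base change to the perfect residue field $k(Q)$ for $Q$ in the countable intersection $\bigcap_e U_e$, and conclude via semicontinuity/minimality over $\Spec R_{k(Q)}$. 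Your $\epsilon=1/m$ double-indexed intersection is a harmless repackaging of the paper's single family $\{V(g_e)\}_e$.
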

\begin{proof}
By \cite[Theorem 4.13]{DeStefaniPolstraYaoGlobalizingFinvariants}, for each $e > 0$, and by \autoref{lem.SplitAtPerfectionImpliesSplit} below, we can spread out our splitting and obtain some $a_e, d_e$ and $0 \neq g_e \in A$ so that there is a surjection
\begin{equation}
\label{eq.SurjectivityAfterSpreadingOut}
R^{1/p^{e}}_{A[1/g_e]^{1/p^{e+d_e}}} \to R_{A[1/g_e]^{1/p^{e+d_e}}}^{\oplus a_e}
\end{equation}
and so that
\[\lambda \leq \min_{x \in \Spec R_{K^{\infty}}} \{s(R_{K^{\infty},x})\} = \lim_{e \rightarrow \infty} {\frac{a_e}{p^{e \dim R}}}.\]

Since our $Q$ is very general, $Q \notin V(g_e)$ for any $e$.  Hence we have surjections $A[1/g_e] \to k(Q)$ for all $e$.  We now apply
\[
\blank \otimes_{{A[1/g_e]}^{1/p^{e+d_e}}} k(Q)^{1/p^{e+d_e}}
\]
to \autoref{eq.SurjectivityAfterSpreadingOut} which yields a surjective map
\[
R^{1/p^{e}}_{k(Q)^{1/p^{e+d_e}}} \to R_{k(Q)^{1/p^{e+d_e}}}^{\oplus a_e}.
\]
But $k(Q)$ is perfect and so this can be identified with a surjective map
\[
(R_{k(Q)})^{1/p^{e}} \to R_{k(Q)}^{\oplus a_e}.
\]
The result follows.
%
%The proof for the divisor version is essentially the same, we describe how to modify the above.  First note that \cite[Theorem 4.22]{DeStefaniPolstraYaoGlobalizingFinvariants} works in the case of a Cartier algebra associated to a divisor.  The surjection in \autoref{eq.SurjectivityAfterSpreadingOut} will be chosen to have the property that the projection maps have corresponding divisors $\geq$ than the pullback of $\Delta$.  Making $g_e$ larger again, we still can assume that $\Delta$ has no vertical components over $A[g_e^{-1}]$.
%By \cite[??]{PatakfalviSchwedeZhang}, the induced projection maps on the fibers correspond to divisors $\geq \Delta_{k(Q)}$.
\end{proof}

\begin{remark}
We do not expect this result to hold for simply general fibers; see \cite{Monskypointquartics1998} for an example where the analagous Hilbert--Kunz statement for general fibers does not hold.
\end{remark}

We now need the following result of P\'erez, the third author, and Yao.

\begin{theorem}
\cite{PerezYaoTucker}
\label{thm.uniformbound}
For every Noetherian ring $A$ of characteristic $p > 0$, and every finitely generated $A$-algebra $R$, and every finitely generated $R$-module $M$, there exists a positive constant $C$ with the following property: for all primes $Q \in \Spec(A)$, all regular $k(Q)$-algebras $\Gamma$, and all $P \in \Spec(R_\Gamma \coloneqq R\otimes_A \Gamma)$, and all $e \geq 1$ , we have that
$\ell_{R_{\Gamma}}\bigl((M_\Gamma)_P / P^{[p^e]}(M_\Gamma)_P \bigr) \leq C p^{e \dim(M_\Gamma)}$ where $M_\Gamma \coloneqq M \otimes_A \Gamma$.
\end{theorem}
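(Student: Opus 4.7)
The plan is to reduce this uniform length bound to a Hilbert--Kunz estimate on a polynomial ring, by combining a prime filtration of $M$ with generic Noether normalisation of its cyclic quotients, so that the universal constant $C$ is manufactured purely from the fixed triple $(A, R, M)$ and absorbs no information about $(\Gamma, Q, P)$.

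First, I would take a prime filtration $0 = M_0 \subset M_1 \subset \cdots \subset M_n = M$ of $M$ as an $R$-module, with cyclic graded pieces $R/\mathfrak{p}_i$ for finitely many primes $\mathfrak{p}_i \in \Spec R$ depending only on $M$. Since $A \to \Gamma$ is flat, tensoring with $\Gamma$ over $A$ preserves exactness and produces a filtration of $M_\Gamma$ with graded pieces $(R/\mathfrak{p}_i)_\Gamma = R_\Gamma/\mathfrak{p}_i R_\Gamma$. Because $\dim(R/\mathfrak{p}_i)_\Gamma \leq \dim M_\Gamma$ for every $i$, additivity of length on short exact sequences reduces the claim to the finitely many cyclic cases $M = R/J$, where $J$ runs over the $\mathfrak{p}_i$.

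Next, for each such $J$ I would choose a generic Noether normalisation $A[x_1,\dots,x_n] \hookrightarrow R/J$, valid after inverting some nonzero $a_J \in A$, over which $R/J$ is finite and generated by $r_J$ elements. The bad locus $V(a_J) \subset \Spec A$ is a proper closed subset of a Noetherian scheme, so Noetherian induction on $\Spec A$ handles the remaining strata. Base changing to $\Gamma$ yields $T_\Gamma := \Gamma[x_1,\dots,x_n] \hookrightarrow (R/J)_\Gamma$ finite and generated by $r_J$ elements, so that
\[
\ell_{R_\Gamma}\bigl((R/J)_\Gamma / P^{[p^e]}(R/J)_\Gamma\bigr) \leq r_J \cdot \ell_{T_\Gamma}\bigl(T_\Gamma/\mathfrak{q}^{[p^e]} T_\Gamma\bigr)
\]
for the contraction $\mathfrak{q} := P \cap T_\Gamma$. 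Regularity of $\Gamma$ makes $T_\Gamma$ regular, Kunz's theorem tells us that Frobenius is flat on $T_\Gamma$, and the bound $\mu(\mathfrak{q}) \leq n$ on a polynomial ring gives the standard Hilbert--Kunz estimate $\ell_{T_\Gamma}(T_\Gamma/\mathfrak{q}^{[p^e]} T_\Gamma) \leq n \cdot p^{e \cdot \height \mathfrak{q}}$. The dimension inequality $\height \mathfrak{q} \leq \dim M_\Gamma$ comes from the fibre dimension formula applied to the finite map $T_\Gamma \hookrightarrow (R/J)_\Gamma$, which is valid because $T_\Gamma$ is Cohen--Macaulay.

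The hard part will be ensuring that the constant $C$ is truly uniform as both $\Gamma$ and $Q$ vary. The key observation is that the normalisation rank $r_J$, the number of variables $n$, and the localisation element $a_J$ depend only on the fixed $(A, R, M)$, while the regularity hypothesis on $\Gamma$ plays two distinct roles: once to ensure $F$-flatness of $T_\Gamma$ by Kunz (so that Hilbert--Kunz estimates pass through base change without accumulating error from the $\Gamma$-direction), and once to preserve the fibre dimension formula under base change (so that $\height \mathfrak{q}$ is genuinely controlled by $\dim M_\Gamma$). This double use of regularity explains why the hypothesis on $\Gamma$ cannot be weakened to mere Noetherianity.
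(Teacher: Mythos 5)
The paper offers no proof of this statement---it is quoted verbatim from \cite{PerezYaoTucker}---so I can only judge your argument on its own terms. Your skeleton is the standard (and correct) one for uniform Hilbert--Kunz bounds: prime filtration to reduce to cyclic modules $R/\mathfrak{p}_i$, generic Noether normalization over $A$ together with Noetherian induction on $\Spec(A)$ to make the constant uniform, and the regular local ring computation on the polynomial subring to finish. However, two of your justifications are false as written, and one of them sits precisely at the step where uniformity in $(Q,\Gamma,P)$ could fail. First, $A \to \Gamma$ is \emph{not} flat: it factors through the residue field $k(Q)$, a quotient of a localization of $A$. This one is harmless---right-exactness of tensor still produces a filtration of $M_\Gamma$ whose graded pieces are \emph{quotients} of the $(R/\mathfrak{p}_i)_\Gamma$, and $\ell\bigl(N/IN\bigr)$ is subadditive along such filtrations---but it must be argued that way. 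Likewise, $\mu(\mathfrak{q}) \leq n$ is simply false for primes of a polynomial ring (already for space curves in three variables the number of generators is unbounded); what you actually need is that $\mathfrak{q}T_{\Gamma,\mathfrak{q}}$ is the maximal ideal of a regular local ring of dimension $\height \mathfrak{q}$, hence generated by a regular sequence of that length, which gives $\ell\bigl(T_{\Gamma,\mathfrak{q}}/\mathfrak{q}^{[p^e]}T_{\Gamma,\mathfrak{q}}\bigr) = p^{e \height \mathfrak{q}}$ exactly. (All lengths here must be read after localizing at $P$, resp.\ $\mathfrak{q}$; the global quotients are usually of infinite length.)

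The genuine gap is the unexamined claim that base change yields $T_\Gamma \hookrightarrow (R/J)_\Gamma$. Injectivity of the Noether normalization is not preserved along the non-flat map $A \to k(Q) \to \Gamma$ in general, and if the map acquires a kernel the exponent breaks: for $T_\Gamma = k[x,y] \twoheadrightarrow k[y]$, $P = (y)$ and $\mathfrak{q} = (x,y)$ one gets $\ell\bigl(T_{\Gamma,\mathfrak{q}}/\mathfrak{q}^{[p^e]}\bigr) = p^{2e}$ while $\dim (R/J)_\Gamma = 1$. Cohen--Macaulayness of $T_\Gamma$ is not the relevant hypothesis and does not rescue this. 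The repair requires two observations you omit. If $\mathfrak{p}_i \cap A \neq 0$, then $(R/\mathfrak{p}_i)_\Gamma = 0$ for all $Q$ outside the proper closed set $V(\mathfrak{p}_i \cap A)$, and the remaining $Q$ must be fed into the Noetherian induction rather than handled by this normalization. If $\mathfrak{p}_i \cap A = 0$, then $(R/\mathfrak{p}_i)[1/a]$ is a finite \emph{faithful} module over $A[1/a][x_1,\dots,x_n]$, hence has full support over it; since the support of a base-changed finite module is the preimage of the support, $(R/\mathfrak{p}_i)_{k(Q)}$ has full support over the reduced ring $k(Q)[x_1,\dots,x_n]$, so its annihilator---which coincides with the kernel of the structure map---vanishes, and the further base change $k(Q) \to \Gamma$ is flat. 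With injectivity restored, $\height \mathfrak{q} \leq \dim T_\Gamma = \dim (R/\mathfrak{p}_i)_\Gamma \leq \dim M_\Gamma$ follows from incomparability and lying over for finite injective extensions, and the rest of your argument closes.
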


The next result is the technical heart of the section.  We state and prove it first in the non-pairs setting and then explain how to generalize it to pairs in a proposition which follows it. 
\begin{notation}
Recall again that if $S$ is a local ring, $a_e(S)$ denotes the free rank of $S^{1/p^e}$ whereas $b_e(S)$ denotes its minimal number of generators. Moreover, if $\Delta$ is a $\bQ$-divisor on $\Spec S$ we shall denote by $a_e^{\Delta}(S)$ the number $a_e^{\lceil (p^e-1) \Delta\rceil}(S)$ defined in \autoref{eqn.Definition:a_e^D}; which is a slight abuse of notation.
\end{notation}

\begin{proposition}
\label{prop.uniformconverginfamily}
Suppose we are in the setting of \autoref{set.A2Setting}. There exists a positive constant $C$ and $0 \neq g \in A$ with the following property: for all $Q \in \Spec\bigl(B \coloneqq A[g^{-1}]\bigr)$, all $d > 0$, all $x \in \Spec \big( R_{k(Q)^{1/p^d}} \big)$, and all $e > 0$,  we have
\[
\left| s\bigl(R_{k(Q)^{1/p^d},x}\bigr) - \frac{a_e\bigl(R_{k(Q)^{1/p^d},x}\bigr)}{\rank_{R_{k(Q)^{1/p^d},x}}\bigl(R_{k(Q)^{1/p^d},x}\bigr)^{1/p^e}} \right| \leq \frac{C}{p^e}.
\]
{%\color{blue}
Similarly we have
\[
\left| \eHK\bigl(R_{k(Q)^{1/p^d},x}\bigr) - \frac{b_e\bigl(R_{k(Q)^{1/p^d},x}\bigr)}{\rank_{R_{k(Q)^{1/p^d},x}}\bigl(R_{k(Q)^{1/p^d},x}\bigr)^{1/p^e}} \right| \leq \frac{C}{p^e}.
\]
}
\end{proposition}

\begin{proof}
Let $\delta = \dim R_K = \dim R_{K^{\infty}} $, so that $\rank_{R_{K^{\infty}}} (R_{K^{\infty}})^{1/p^e} = p^{e \delta}$.  Since $R_{A^{1/p^{e+d}}} \to R_{A^{1/p^{e+d}}}^{1/p^e}$ base changes to $R_{K^{\infty}} \to R_{K^{\infty}}^{1/p^e}$ for any $e, d > 0$, we see that
\[
\rank_{R_{A^{1/p^{e+d}}}} R_{A^{1/p^{e+d}}}^{1/p^e} = p^{e \delta}
\]
as well.
%Inverting an element of $A$, we may assume that the extension $A \subseteq R$ is flat, and all fibers are geometrically reduced.
Note that $A^{1/p^d} \subseteq R_{A^{1/p^d}}$ is also flat, and for any $Q \in \Spec(A)$ and $x \in \Spec\big(R_{k(Q)^{1/p^d}}\big)$, we have that $\height x - \height Q = \dim R_{k(Q)^{1/p^d},x}$.  Using that $\Frac\big(R_{A^{1/p^{e+d}}}\big) = L_{K^{1/p^{e+d}}}$ as $R_{A^{1/p^{e + d}}}$ is a domain, we also compute
\begin{align} \label{eqn.RefereeSuggestion}
p^{e\delta}  = \rank_{R_{A^{1/p^{e + d}}}}R^{1/p^{e}}_{A^{1/p^{e + d}}} = \Big[L^{1/p^e}_{K^{1/p^{e+d}}}: L_{K^{1/p^{e+d}}}\Big] &= \frac{\Big[L^{1/p^e}_{K^{1/p^{e + d}}}: L_{K^{1/p^{d}}}\Big]}{\Big[L_{K^{1/p^{e+d}}}:L_{K^{1/p^d}}\Big]} \\ \nonumber
& = \frac{\Big[L^{1/p^e}_{K^{1/p^{e + d}}}: L_{K^{1/p^{d}}}\Big]}{\big[K^{1/p^{e}}:K\big]}\\ \nonumber
& = \frac{\big[k(x)^{1/p^e}:k(x)\big] }{\big[k(Q)^{1/p^e}:k(Q)\big]} \cdot p^{e(\height x - \height Q)}\\
& = \frac{\big[k(x)^{1/p^e}:k(x)\big] }{\big[k(Q)^{1/p^e}:k(Q)\big]} \cdot p^{e  \dim R_{k(Q)^{1/p^d},x}} \nonumber
\end{align}

whence
\[
\big[k(Q)^{1/p^e}:k(Q)\big] \cdot p^{e\delta} = \rank_{R_{k(Q)^{1/p^d},x}}\big(R_{k(Q)^{1/p^d},x}\big)^{1/p^e}.
\]

Form exact sequences
\begin{equation}
\label{eq:sescee}
 (R_{A^{1/p}})^{\oplus p^{\delta}} \xrightarrow{\alpha_1} R^{1/p} \to M_1 \to 0
 \end{equation}
 \begin{equation}
 \label{eq:sesmap}
  R^{1/p} \xrightarrow{\alpha_2} (R_{A^{1/p}})^{\oplus p^{\delta}}  \to M_2 \to 0
\end{equation}
%\[
% R^{1/p} \to (R_{A^{1/p}})^{\oplus p^{\delta}}  \to M_2 \to 0
%\]
of $R_{A^{1/p}}$-modules so that both $M_1, M_2$ are torsion. Take $0 \neq c \in R_{A^{1/p}}$ that kills both; replacing $c$ with $c^p$ if necessary, we may further assume $0 \neq c \in R$. The image of $U = \Spec R[1/c] \subseteq \Spec R$ in $\Spec A$ is open \cite[Tag 01UA]{stacks-project} and contains the image of the generic point. Thus, after inverting an element of $A$, we may assume $c$ does not vanish along any fiber.  In other words, for any $Q \in \Spec A$ and $x \in \Spec R_{k(Q)^{1/p^d}}$, the image of $c$ in $R_{k(Q)}$ is non-zero, and hence also in $R_{k(Q)^{1/p^d},x}$.

Applying $\blank \otimes_{A^{1/p}} k(Q)^{1/p^{d+1}}$  to the sequences above gives that
\[
 \big(R_{k(Q)^{1/p^{d+1}}}\big)^{\oplus p^{\delta}} \to R^{1/p}_{k(Q)^{1/p^{d+1}}} \to M_1 \otimes_{A^{1/p}} k(Q)^{1/p^{d+1}} \to 0
\]
\[
 R^{1/p}_{k(Q)^{1/p^{d+1}}} \to \big(R_{k(Q)^{1/p^{d+1}}}\big)^{\oplus p^{\delta}}  \to M_2 \otimes_{A^{1/p}} k(Q)^{1/p^{d+1}} \to 0
\]
are exact sequences of $R_{k(Q)^{1/p^{d+1}}}$-modules.  But we have that $R_{k(Q)^{1/p^{d+1}}}$ is a free $R_{k(Q)^{1/p^d}}$-module of rank $\big[k(Q)^{1/p}:k(Q)\big]$, so we may view these as sequences
of $R_{k(Q)^{1/p^d}}$-modules and localize at $x \in \Spec R_{k(Q)^{1/p^d}}$ to give the exact sequences of $R_{k(Q)^{1/p^d},x}$-modules
\[
 \big(R_{k(Q)^{1/p^d},x}\big)^{\oplus p^{\delta}[k(Q)^{1/p}:k(Q)]} \xrightarrow{\psi_1} \big(R_{k(Q)^{1/p^d},x}\big)^{1/p} \to \Big(M_1 \otimes_{A^{1/p}} k(Q)^{1/p^{d}}\Big)_x^{\oplus [k(Q)^{1/p}:k(Q)]} \to 0
\]
\[
\big(R_{k(Q)^{1/p^d},x}\big)^{1/p} \xrightarrow{\psi_2} \big(R_{k(Q)^{1/p^d},x}\big)^{\oplus p^{\delta}[k(Q)^{1/p}:k(Q)]}  \to \Big(M_2 \otimes_{A^{1/p}} k(Q)^{1/p^{d}}\Big)_x^{\oplus [k(Q)^{1/p}:k(Q)]} \to 0
\]
so that the summands of the quotients $\big(M_i \otimes_{A^{1/p}} k(Q)^{1/p^{d}}\big)_x$ for $i=1,2$ are killed by the image of $c$ in $R_{k(Q)^{1/p^d},x}$.
If $P$ is the maximal ideal of $R_{k(Q)^{1/p^d},x}$ and $\ell(\blank)$ denotes length over $R_{k(Q)^{1/p^d},x}$,  applying \autoref{thm.uniformbound} (for $A^{1/p} \to R_{A^{1/p}}$ with the $R_{A^{1/p}}$-modules $M_1, M_2$), we have that there is a positive constant $C'$ so that
\begin{align*}
 \ell\left( \frac{\big(M_i \otimes_{A^{1/p}} k(Q)^{1/p^{d}}\big)_x}{P^{[p^e]}\big(M_i \otimes_{A^{1/p}} k(Q)^{1/p^{d}}\big)_x} \right) \leq \frac{C'}{p^e}  p^{e \cdot \dim R_{k(Q)^{1/p^d},x}} = \frac{C'}{p^e} \frac{  [k(x)^{1/p}:k(x)] p^{(e+1) \dim R_{k(Q)^{1/p^d},x}}}{[k(Q)^{1/p}:k(Q)] p^\delta}
\end{align*}
for $i = 1,2$ and all $e > 0$, and where the last equality follows from setting $e=1$ in the computation \autoref{eqn.RefereeSuggestion}.

Let $J_e$ denote {%\color{blue}
either the $e$-th Frobenius power $P^{[p^e]}$ of the maximal ideal $P$ used to define the Hilbert--Kunz multiplicity $e_{\mathrm{HK}}(R_{k(Q)^{1/p^d},x})$, or the} $e$-th Frobenius degeneracy ideal $I_e\big(R_{k(Q)^{1/p^d},x}\big)$ used to define the $F$-signature.
Using the well-known properties $(J_e)^{[p]} \subseteq J_{e+1}$ and $\phi\bigl( (J_{e+1})^{1/p}\bigr) \subseteq J_e$
%\[
%\begin{array}{c} \displaystyle
%I_e\big(R_{k(Q)^{1/p^d},x}\big)^{[p]} \subseteq I_{e+1}\big(R_{k(Q)^{1/p^d},x}\big) \\
%\displaystyle \phi\Big(I_{e+1}\big(R_{k(Q)^{1/p^d},x}\big)^{1/p}\Big) \subseteq I_e\big(R_{k(Q)^{1/p^d},x}\big)
%\end{array}
%\]
for all $\phi \in \Hom_{R_{k(Q)^{1/p^d},x}}\Big(\big(R_{k(Q)^{1/p^d},x}\big)^{1/p},R_{k(Q)^{1/p^d},x}\Big)$,\footnote{See for instance \cite{PolstraTuckerFSingHKMultCombinedApproach} (and references therein) where these properties are used systematically in the study of Hilbert--Kunz multiplicities and $F$-signatures. } the maps $\psi_1, \psi_2$ induce
\begin{align*}
 &\Big(R_{k(Q)^{1/p^d},x} \big/ J_e\Big)^{\oplus p^{\delta}[k(Q)^{1/p}:k(Q)]} \xrightarrow{\psi_{1,e}} \Big(R_{k(Q)^{1/p^d},x} \big/ J_{e+1}\Big)^{1/p}\\
 &\Big(R_{k(Q)^{1/p^d},x} \big/ J_{e+1}\Big)^{1/p} \xrightarrow{\psi_{2,e}}  \Big(R_{k(Q)^{1/p^d},x} \big/ J_e\Big)^{\oplus p^{\delta}[k(Q)^{1/p}:k(Q)]}
 \end{align*}
%\begin{align*}
% &\Big(R_{k(Q)^{1/p^d},x} \big/ I_e\big(R_{k(Q)^{1/p^d},x}\big)\Big)^{\oplus p^{\delta}[k(Q)^{1/p}:k(Q)]} \xrightarrow{\psi_{1,e}} \Big(R_{k(Q)^{1/p^d},x} \big/ I_{e+1}\big(R_{k(Q)^{1/p^d},x}\big)\Big)^{1/p}\\
% &\Big(R_{k(Q)^{1/p^d},x} \big/ I_{e+1}\big(R_{k(Q)^{1/p^d},x}\big)\Big)^{1/p} \xrightarrow{\psi_{2,e}}  \Big(R_{k(Q)^{1/p^d},x} \big/ I_e\big(R_{k(Q)^{1/p^d},x}\big)\big)^{\oplus p^{\delta}[k(Q)^{1/p}:k(Q)]}
% \end{align*}
with $\coker \psi_{i,e}$ a quotient of $\coker \psi_i$ killed by $P^{[p^e]}$ for $i = 1,2$. Taking lengths and dividing by
\[
\bigl[k(x)^{1/p}:k(x)\bigr] p^{(e+1) \dim R_{k(Q)^{1/p^d},x}}
\]
 gives
  \[
 \left| \frac{\ell \left( \displaystyle \frac{R_{k(Q)^{1/p^d},x}}{J_e} \right)}{p^{e \dim R_{k(Q)^{1/p^d},x}}} -  \frac{\ell \left( \displaystyle \frac{R_{k(Q)^{1/p^d},x}}{J_{e+1}} \right)}{p^{(e+1)\dim R_{k(Q)^{1/p^d},x}}} \right| \leq \frac{C'}{p^{e}} \cdot \frac{1}{[k(Q)^{1/p}:k(Q)] p^{\delta}}
 \]
% \[
% \left| \frac{\ell \left( \displaystyle \frac{R_{k(Q)^{1/p^d},x}}{I_e\big(R_{k(Q)^{1/p^d},x}\big)} \right)}{p^{e \cdot\dim R_{k(Q)^{1/p^d},x}}} -  \frac{\ell \left( \displaystyle \frac{R_{k(Q)^{1/p^d},x}}{I_{e+1}\big(R_{k(Q)^{1/p^d},x}\big)} \right)}{p^{(e+1)\dim R_{k(Q)^{1/p^d},x}}} \right| \leq \frac{C'}{p^{e + \delta}}
% \]
so that the proposition follows from \cite[Lemma 3.5]{PolstraTuckerFSingHKMultCombinedApproach} with $C = 2C'/[k(Q)^{1/p}:k(Q)]p^\delta$.
 \end{proof}

As mentioned above, we need to generalize the above to the context of pairs.

 \begin{proposition}
\label{prop.uniformconverginfamilydeltas}
Suppose we are in the setting of \autoref{set.A2Setting}. There exists a positive constant $C$ and $0 \neq g \in A$ with the following property: for all $Q \in \Spec(B \coloneqq A[g^{-1}])$, all $d > 0$, all $x \in \Spec \big(R_{k(Q)^{1/p^d}}\big)$, and all $e > 0$,  we have
\[
\left| s\big(R_{k(Q)^{1/p^d},x}, \Delta_{Q,d}\big) - \frac{a_e^{\Delta_{Q,d}}\big(R_{k(Q)^{1/p^d},x}\big)}{\rank_{R_{k(Q)^{1/p^d},x}}\big(R_{k(Q)^{1/p^d},x}\big)^{1/p^e}} \right| \leq \frac{C}{p^e}
\]
where $\Delta_{Q,d} = \Delta|_{\Spec\big(R_{k(Q)^{1/p^d}}\big)}$.
\end{proposition}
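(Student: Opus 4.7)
The plan is to imitate the proof of \autoref{prop.uniformconverginfamily} almost verbatim, now tracking the divisor $\Delta$ through every step. By \autoref{lem.RelativeDivisorsAndBaseChange} the $\bQ$-divisor $\Delta$ pulls back compatibly to each $\Spec R_{k(Q)^{1/p^d}}$ giving $\Delta_{Q,d}$, and by the hypothesis in \autoref{set.A2Setting} its support contains no fiber, so there is no obstruction to performing these restrictions. The key substitution is to replace the Frobenius pushforward $R^{1/p}$ with the rank-one reflexive $R_{A^{1/p}}$-module $N := R(\lceil p\Delta\rceil)^{1/p}$ (or more precisely a finite family of such modules, one for each shift used in the standard containments of Frobenius degeneracy ideals in the pair setting).

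First, since $R_{A^{1/p}}$ is a normal integral domain by \autoref{lem.BaseChangeIsNormalInSetting} and $N$ is generically free of rank $p^\delta$ over $R_{A^{1/p}}$, exactly as $R^{1/p}$ is, we can form right exact sequences
\[
(R_{A^{1/p}})^{\oplus p^\delta} \xrightarrow{\alpha_1} N \to M_1 \to 0, \qquad N \xrightarrow{\alpha_2} (R_{A^{1/p}})^{\oplus p^\delta} \to M_2 \to 0
\]
with $M_1, M_2$ torsion $R_{A^{1/p}}$-modules. Pick $0 \neq c \in R$ annihilating both cokernels; after inverting a suitable $g \in A$ and replacing $c$ by $c^p$ if needed, we arrange that $V(c) \subseteq \Spec R$ meets no fiber over $\Spec A[g^{-1}]$.

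Second, for each $Q$, $d$, and $x$, apply $\blank \otimes_{A^{1/p}} k(Q)^{1/p^{d+1}}$ to the two sequences, invoke the freeness of $R_{k(Q)^{1/p^{d+1}}}$ over $R_{k(Q)^{1/p^d}}$ of rank $[k(Q)^{1/p}:k(Q)]$, and localize at $x$. Using \autoref{lem.RelativeDivisorsAndBaseChange} to identify the pullback of $\lceil p\Delta\rceil$ with the corresponding divisor on the fiber, we obtain right exact sequences of $R_{k(Q)^{1/p^d},x}$-modules whose cokernels are still quotients of $M_1$, $M_2$ killed by $c$. Applying \autoref{thm.uniformbound} to $M_1, M_2$ as $R_{A^{1/p}}$-modules yields a constant $C'$ bounding the length of these cokernels modulo $P^{[p^e]}$ by $\tfrac{C'}{p^e}[k(x)^{1/p}:k(x)] p^{(e+1)\dim R_{k(Q)^{1/p^d},x}} / p^\delta[k(Q)^{1/p}:k(Q)]$, uniformly in $Q, d, x, e$.

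Third, pass to quotients by the divisor-pair Frobenius degeneracy ideals $I_e(R_{k(Q)^{1/p^d},x}, \lceil (p^e-1)\Delta_{Q,d}\rceil)$ using the standard containments $I_e(R,D)^{[p]} \subseteq I_{e+1}(R, pD + \text{bdd})$ and $\phi(I_{e+1}(R, D')^{1/p}) \subseteq I_e(R, D'')$ for appropriate choices of divisors, together with \autoref{lem:perturb} to absorb the bounded Cartier discrepancies between $p\lceil(p^e-1)\Delta\rceil+\lceil p\Delta\rceil$ and $\lceil(p^{e+1}-1)\Delta\rceil$; these discrepancies contribute only an $O(p^{(e-1)\dim})$ error at the $e$-th level, which is negligible after dividing by $p^{e\dim R_{k(Q)^{1/p^d},x}}$. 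We then conclude as in \autoref{prop.uniformconverginfamily} by invoking \cite[Lemma~3.5]{PolstraTuckerCombined} with $C$ a suitable multiple of $C'$.

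The main technical obstacle is the bookkeeping of the several divisors $\lceil (p^e - 1) \Delta_{Q,d}\rceil$, $\lceil p^e \Delta_{Q,d}\rceil$, $p \lceil (p^{e-1}-1)\Delta_{Q,d}\rceil$, etc., which all differ by fixed (independent of $e$, $Q$, $d$) Cartier amounts pulled back from $\Spec R$. Once one commits to a specific choice of auxiliary divisor for the rank-one reflexive module $N$ and verifies that the pullback of that divisor along every base change is the corresponding local auxiliary divisor — which is exactly the content of \autoref{lem.RelativeDivisorsAndBaseChange} — the uniform bound follows from the same Hilbert--Kunz-type uniformity as in the non-pair case.
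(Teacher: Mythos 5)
Your overall strategy is the right one and matches the paper's: rerun \autoref{prop.uniformconverginfamily} while carrying the divisor through the telescoping estimate, and conclude with the uniform Hilbert--Kunz bound of \autoref{thm.uniformbound} plus \cite[Lemma 3.5]{PolstraTuckerCombined}. The implementation differs: the paper keeps the module $R^{1/p}$ in both right exact sequences and instead \emph{premultiplies} the maps $\alpha_1,\alpha_2$ by $c'$ and $(c')^{1/p}$, where $0\neq c'\in R$ satisfies $\Div_R(c')\geq p\Delta$ and (after inverting an element of $A$) vanishes along no fiber. The point of that choice is that the exact containments
\[
c'\bigl(I_e(\blank,\lceil(p^e-1)\Delta_{Q,d}\rceil)^{[p]}\bigr)\subseteq I_{e+1}(\blank,\lceil(p^{e+1}-1)\Delta_{Q,d}\rceil),\qquad
\phi\bigl((c'I_{e+1})^{1/p}\bigr)\subseteq I_e,
\]
taken verbatim from the proof of \cite[Theorem 4.12]{PolstraTuckerCombined}, let the induced maps $\psi_{i,e}$ act directly between the quotients by the \emph{actual} ideals $I_e(\blank,\lceil(p^e-1)\Delta_{Q,d}\rceil)$, so no reconciliation of roundings is ever needed at finite level.

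Your variant (twisting the module to $R(\lceil p\Delta\rceil)^{1/p}$ and then "absorbing the bounded Cartier discrepancies" with \autoref{lem:perturb}) has a genuine gap at exactly that reconciliation step. \autoref{lem:perturb} is purely a statement about the limit $s(A,\Delta)$; it provides no finite-level, let alone uniform-in-$(Q,d,x)$, comparison of $\ell(R_x/I_e(R_x,D))$ and $\ell(R_x/I_e(R_x,D'))$, which is what the conclusion $|s-a_e/\rank|\leq C/p^e$ requires. Moreover the error you claim for the discrepancy, $O(p^{(e-1)\dim})$, is \emph{not} negligible after dividing by $p^{e\dim R_{k(Q)^{1/p^d},x}}$: it is $O(p^{-\dim})$, a constant, whereas the telescoping argument needs each consecutive difference to be $\leq C'/p^e$, i.e.\ the error must be $O(p^{e(\dim-1)})$ with a constant uniform over the family. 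The gap is repairable --- the discrepancy divisors differ by a fixed Cartier divisor $\Div(h)$ independent of $e,Q,d,x$, so the relevant cokernels are killed by $h$ and a second application of \autoref{thm.uniformbound} bounds their lengths by $C''p^{e(\dim-1)}$ uniformly --- but as written the appeal to \autoref{lem:perturb} does not do the job, and the paper's choice of premultiplying by $c'$ avoids the issue entirely.
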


\begin{proof}
The desired result follows the argument  in \autoref{prop.uniformconverginfamily}, with modifications we now describe to account for the addition of $\Delta$.  Choose $0 \neq c' \in R$ so that $\Div_R(c') \geq p \Delta$.  After inverting an element of $A$, we may assume $c'$ does not vanish along any fiber and thus $\Div_R(c')|_{\Spec R_{k(Q)}} \geq p \Delta|_{\Spec R_{k(Q)}}$ on fibers as well. In particular, for any $\phi \in \Hom_{R_{k(Q)^{1/p^d},x}}\Big(\big(R_{k(Q)^{1/p^d},x}\big)^{1/p},R_{k(Q)^{1/p^d},x}\Big)$ and $\psi(\blank) = \phi\bigl( (c')^{1/p} \cdot \blank\bigr)$, we have that $\Delta_\psi \geq \Delta_{Q,d}$
%\todo{{\bf Karl:} it said that $\Delta_{\psi} \geq p\Delta_{Q,d}$, but that didn't seem to follow from the construction, did it?
%{\bf Kevin:} Well, what we have from the choice of $c'$ is that $\Div_R(c')|_{\Spec R_{k(Q)}} \geq p \Delta|_{\Spec R_{k(Q)}}$. I believe this condition is preserved on base change to $k(Q)^{1/p^d}$, so that (with the standard base change abuse of notation) $\Div_R(c')|_{\Spec R_{k(Q)^{1/p^d}}} \geq p \Delta|_{\Spec R_{k(Q)^{1/p^d}}} \coloneqq p \Delta_{Q,d}$.  Now, by construction, $\psi$ factors through multiplication by $c'$, so that $\Delta_\psi \geq \frac{p}{p-1} \Delta_{Q,d} \geq \Delta_{Q,d}$.  Thus, I believe $\Delta_\psi \geq \Delta_{Q,d}$ is correct above, and that's all that's needed for the compatiblity with the ideals below\ldots  I agree, but it is what I changed.\\
%{\bf Karl:} Ok, I agree.  Before there was an extra $p$, it said $\Delta_{\psi} \geq p\Delta_{Q,d}$, but I changed it to $\Delta_{\psi} \geq \Delta_{Q,d}$ and removed the todo, also I didn't see why the $p$ was needed on that term.  I agree its needed on the other one $\Div_R(c')|_{\Spec R_{k(Q)^{1/p^d}}} \geq p \Delta_{Q,d}$.  also, I added a missing $\geq$ on the next line.}
and $\Div_R(c')|_{\Spec R_{k(Q)^{1/p^d}}} \geq p \Delta_{Q,d}$ where $\Delta_{Q,d} = \Delta|_{\Spec R_{k(Q)^{1/p^d}}}$.%= \Delta$.

Replace $\alpha_1, \alpha_2$ in the right exact sequences \ref{eq:sescee} and \ref{eq:sesmap} with their premultiples
\[
(R_{A^{1/p}})^{\oplus p^\delta} \xrightarrow{\cdot c'} (R_{A^{1/p}})^{\oplus p^\delta} \xrightarrow{\alpha_1} R^{1/p}
\]
\[
R^{1/p} \xrightarrow{\cdot (c')^{1/p}} R^{1/p} \xrightarrow{\alpha_2}  (R_{A^{1/p}})^{\oplus p^\delta},
\]
respectively.  In \cite[proof of Theorem 4.12]{PolstraTuckerFSingHKMultCombinedApproach}, the properties
\[
\begin{array}{c} \displaystyle
c'\Big(I_e\big(R_{k(Q)^{1/p^d},x}, \lceil (p^e-1)\Delta_{Q,d}\rceil \big)^{[p]}\Big) \subseteq I_{e+1}\big(R_{k(Q)^{1/p^d},x}, \lceil (p^{e+1}-1)\Delta_{Q,d}\rceil \big) \\
\displaystyle \phi\Big( \big(c'I_{e+1}\big(R_{k(Q)^{1/p^d},x},\lceil (p^{e+1}-1)\Delta_{Q,d}\rceil\big)\big)^{1/p}\Big) \subseteq I_e\big(R_{k(Q)^{1/p^d},x},\lceil (p^e-1)\Delta_{Q,d}\rceil\big)
\end{array}
\]
are shown to hold.  The proof of  \autoref{prop.uniformconverginfamily} can now be traced through without further modification.
The corresponding maps $\psi_i$ satisfy the analogs of the above properties with respect to the ideals $I_e\big(R_{k(Q)^{1/p^d},x}, \lceil \big(p^e-1)\Delta_{Q,d}\rceil\big)$ and pass to maps $\psi_{i,e}$ on the quotients, with $\coker \psi_{i,e}$ a quotient of $\coker \psi_i$ killed by $P^{[p^e]}$ for $i = 1,2$.  In particular, the constant $C'$ derived in the proof of \autoref{prop.uniformconverginfamily} from \autoref{thm.uniformbound} once more gives
\begin{align*}
 &\left| \frac{\ell \left( \displaystyle \frac{R_{k(Q)^{1/p^d},x}}{I_e\big(R_{k(Q)^{1/p^d},x}, \lceil (p^e-1)\Delta_{Q,d}\rceil\big)} \right)}{p^{e \cdot\dim R_{k(Q)^{1/p^d},x}}} -  \frac{\ell \left( \displaystyle \frac{R_{k(Q)^{1/p^d},x}}{I_{e+1}\big(R_{k(Q)^{1/p^d},x}, \lceil (p^{e+1}-1)\Delta_{Q,d}\rceil \big)} \right)}{p^{(e+1)\dim R_{k(Q)^{1/p^d},x}}} \right|\\
 \leq &{}\frac{C'}{p^{e}} \cdot \frac{1}{[k(Q)^{1/p}:k(Q)] p^{\delta}}
 \end{align*}
so that once more the proposition follows from \cite[Lemma 3.5]{PolstraTuckerFSingHKMultCombinedApproach} with $C = 2C'/[k(Q)^{1/p}:k(Q)]p^\delta$.
\end{proof}

%\todo{{\bf Karl:}  If possible, we should say a bit more about how things fit together, ie what these properties induce exactly.
%{\bf Kevin:} I have added more explanation. Will this suffice?
%{\bf Karl:} Yup, just read it, it looks good.}

\begin{lemma}
\label{lem.SplitAtPerfectionImpliesSplit}
In the setting of \autoref{set.A2Setting}, suppose that there is a surjective $R_{K^{\infty}}$-linear map
\begin{equation}
\label{eq.BaseChangeSurjectionImplies}
(R_{K^{\infty}})^{1/p^e} \to R_{K^{\infty}}^{\oplus a_e}
\end{equation}
for some $a_e > 0$.
Then for some $d_e > 0$ and $0 \neq g \in A$, setting $B = A[1/g]$, there is a surjective $R_{B^{1/p^{e+d_e}}}$-linear map
\[
R^{1/p^e} \otimes_B B^{1/p^{e+d_e}} = R^{1/p^{e}}_{B^{1/p^{e+d_e}}} \to R_{B^{1/p^{e+d_e}}}^{\oplus a_e}
\]
which tensors with $\otimes_{B^{1/p^{e+d_e}}} K^{\infty}$ to recover \autoref{eq.BaseChangeSurjectionImplies}.

Furthermore, suppose there exists a Weil divisor $\Delta$ on $\Spec R$ (still in \autoref{set.A2Setting}) such that each component projection $\rho : R_{K^{\infty}}^{1/p^e} \to R_{K^{\infty}}$ corresponds to a $\bQ$-divisor $\Delta_{\rho} \geq \xi^* \Delta$ (where $\xi : \Spec R_{K^{\infty}} \to \Spec R$ is the canonical map).  In this case we can choose our $g$ such that the map
\[
R^{1/p^{e}}_{B^{1/p^{e+d_e}}} \to R_{B^{1/p^{e+d_e}}}^{\oplus a_e}
\]
also has the property that each component projection $\gamma : R^{1/p^{e}}_{B^{1/p^{e+d_e}}} \to R_{B^{1/p^{e+d_e}}}$ corresponds to a $\bQ$-divisor $\Delta_{\gamma}$ on $\Spec R_{B^{1/p^{d_e}}}$ such that $\Delta_{\gamma} \geq \eta^* \Delta$ (where $\eta : \Spec R_{B^{1/p^{d_e}}} \to \Spec R$ is the canonical map).
\end{lemma}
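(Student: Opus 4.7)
The plan is to descend the given surjection from $K^\infty$ down to a finite level $K^{1/p^{d_0}}$, spread it out from the generic fiber of $\Spec A$ to an open subset $\Spec A[1/g]$, and then handle the divisor condition by further shrinking.

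\textbf{Descent and spreading out.} Since $A$ is $F$-finite and $R$ is of finite type over $A$, the module $R^{1/p^e}$ is finitely presented over $R_{A^{1/p^e}}$, hence $R_{K^\infty}^{1/p^e}$ is finitely presented over $R_{K^\infty}$. Writing $K^\infty = \varinjlim_d K^{1/p^d}$ and using that $\Hom$ out of a finitely presented module commutes with filtered colimits, the surjection $\phi$ descends to some $R_{K^{1/p^{d_0}}}$-linear map $\phi_{d_0}$. Setting $d_e := d_0$ and clearing denominators in the finitely many elements of $K^{1/p^{e+d_e}}$ involved in $\phi_{d_0}$, I would find $0 \neq g \in A$ so that, with $B := A[1/g]$, the map extends to an $R_{B^{1/p^{d_e}}}$-linear map
\[
\tilde\phi \colon R^{1/p^e}_{B^{1/p^{e+d_e}}} \to R_{B^{1/p^{e+d_e}}}^{\oplus a_e}
\]
which recovers $\phi$ after base change to $K^\infty$. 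The cokernel of $\tilde\phi$ is a finitely generated module whose support is a closed subset of $\Spec R_{B^{1/p^{e+d_e}}}$ not dominating $\Spec B$ (since it vanishes over $K^\infty$); inverting one more element of $A$ then ensures surjectivity.

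\textbf{Divisor condition.} I would apply \autoref{lem.RelativeDivisorMapCorresponds} (with the roles of $A \subseteq R$ played by $B^{1/p^{d_e}} \subseteq R_{B^{1/p^{d_e}}}$) to each component projection $\gamma_i$ of $\tilde\phi$. The required property that $\gamma_i$ generate $\Hom$ at the generic point of every fiber is an open condition, and it holds at the generic fiber because the base change of $\gamma_i$ to $K^\infty$ is the corresponding $\rho_i$ — so, after shrinking $B$, we obtain divisors $\Delta_{\gamma_i}$ on $\Spec R_{B^{1/p^{d_e}}}$. Then \autoref{lem.RelativeDivisorsAndBaseChange} yields $\mu^*\Delta_{\gamma_i} = \Delta_{\rho_i}$, where $\mu\colon \Spec R_{K^\infty} \to \Spec R_{B^{1/p^{d_e}}}$, so the hypothesis $\Delta_{\rho_i} \geq \xi^* \Delta$ translates into $\mu^*(\Delta_{\gamma_i} - \eta^*\Delta) \geq 0$. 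The divisor $\Delta_{\gamma_i} - \eta^*\Delta$ has only finitely many prime components; those with negative coefficient cannot dominate $\Spec B$, so each must project to a proper closed subset of $\Spec B^{1/p^{d_e}}$, equivalently of $\Spec B$. Inverting one further element of $A$ for each of the finitely many components $\gamma_i$ eliminates all the bad components simultaneously.

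\textbf{Main obstacle.} The delicate step is the divisor condition — passing from the generic-fiber inequality $\mu^*\Delta_{\gamma_i} \geq \mu^*\eta^*\Delta$ to the absolute inequality $\Delta_{\gamma_i} \geq \eta^*\Delta$ on $\Spec R_{B^{1/p^{d_e}}}$. This hinges on identifying the finitely many prime divisors of $\Delta_{\gamma_i} - \eta^*\Delta$ whose negative contribution vanishes only after restriction to the generic fiber, and checking that each such prime's image in $\Spec B$ lies in a proper closed subset so that a further localization removes it. The careful use of \autoref{lem.RelativeDivisorsAndBaseChange} along the chain $B^{1/p^{d_e}} \to K^{1/p^{d_e}} \to K^\infty$ is essential to ensure that the divisor correspondence is compatible across all the base changes involved.
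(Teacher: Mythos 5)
Your proposal is correct and follows essentially the same route as the paper: descend the surjection to a finite level $K^{1/p^{e+d_e}}$, clear denominators to spread it out over $B=A[1/g]$, invert a further element to restore surjectivity since the cokernel vanishes on the generic fiber, and use \autoref{lem.RelativeDivisorsAndBaseChange} to transfer the divisor inequality. The only (harmless) difference is in the last step: the paper avoids your extra localizations by noting that any component of $\Delta_{\gamma}-\eta^*\Delta$ with negative coefficient would have to be vertical, yet $\eta^*\Delta$ has no vertical components (the support of $\Delta$ contains no fiber) and $\Delta_{\gamma}\geq 0$, so $\Delta_{\gamma}\geq \eta^*\Delta$ holds already without further shrinking.
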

\begin{proof}
%\todo{{\bf Karl + Kevin} Rewrite this in two steps first.  First do field level (which fixes the $d_e$) and then localize.  Note the $d_e$ comes from requiring the surjectivity.  The inverting $g$ is rather harmless and won't mess with the $d_e$.}
First notice since we are planning to invert an element of $A$, we may assume that $\omega_A \cong A$.  Furthermore, any future $B$ satisfies the same property.  Note also that $R_{K^{\infty}}$ is a normal domain by \autoref{lem.BaseChangeIsNormalInSetting}.
%Since $a_e > 0$, we see immediately that $R_{K^{\infty}}$ is $F$-split and hence reduced.  By replacing $A$ by an $A[h^{-1}]$ we may already assume that $A \subseteq R$ is flat with geometrically reduced fibers \cite{}.
We have $(R_{K^{\infty}})^{1/p^e} \cong R^{1/p^e} \otimes_{K^{1/p^e}} K^{\infty}$ and so we can view our initial map as an $R_{K^{\infty}}$-linear map, and in particular a $K^{\infty}$-linear map
\[
\phi : (R^{1/p^e})_{K^{\infty}} \to R_{K^{\infty}}^{\oplus a_e}.
\]
In other words, we are simply identifying relative and absolute Frobenius over a perfect field.  Fix $x_1, \dots, x_t$ a generating set for $R^{1/p^e}$ over $R_{A^{1/p^e}}$.  By base change, the images of those elements are also a generating set for $R^{1/p^e}_{K^{\infty}}$ over $R_{K^{\infty}}$ or for any intermediate base change.   %Thus it is also a generating set for $R^{1/p^e}$ over $R^{1/p^e}_{K^{1/p^e}$
We may assume that all of the $\phi(x_i)$ land inside $R_{K^{1/p^{e+d_e}}}^{\oplus a_e} \hookrightarrow R_{K^{\infty}}^{\oplus a_e}$ for some $d_e > 0$.  Note the $\phi(x_i)$ generate $\phi\left(R^{1/p^e}_{K^{1/p^{e+d_e}}}\right)$ as a $R_{K^{1/p^{e+d_e}}}$-module.
This implies that
\[
\phi\left(R^{1/p^e}_{K^{1/p^{e+d_e}}}\right) \subseteq R_{K^{1/p^{e+d_e}}}^{\oplus a_e}
\]
and hence we have a map (which we also call $\phi$)
\[
\phi : R^{1/p^e}_{K^{1/p^{e+d_e}}} \to R_{K^{1/p^{e+d_e}}}^{\oplus a_e}.
\]
Since this map becomes surjective after the faithfully flat base change to $K^{\infty}$, it is surjective.

By the same argument as above, we may find a denominator $g'$ so that
\[
\phi\left(R^{1/p^e}_{A^{1/p^{e+d_e}}[1/g']}\right) \subseteq R_{A^{1/p^{e+d_e}}[1/g']}^{\oplus a_e}
\]
which produces a map
\[
\phi : R^{1/p^e}_{A^{1/p^{e+d_e}}[1/g']} \to R_{A^{1/p^{e+d_e}}[1/g']}^{\oplus a_e}.
\]
We do not know that this map is surjective but the cokernel is zero if we tensor with $R_{K^{1/p^{e+d_e}}}$.
Inverting another element $g''$, setting $g = g' g''$ and $B = A[g^{-1}]$, we may assume that
\[
\phi : R^{1/p^e}_{B^{1/p^{e+d_e}}} \to R_{B^{1/p^{e+d_e}}}^{\oplus a_e}
\]
is surjective as desired.

Now we move on to the statement involving $\Delta$.  We begin in exactly the same way and produce a surjective map
\[
\phi : R^{1/p^e}_{B^{1/p^{e+d_e}}} \to R_{B^{1/p^{e+d_e}}}^{\oplus a_e}
\]
for some $d_e > 0$ where $B=A[1/g]$.  We need to show that the component projection maps $\gamma$ coming from $\phi$ produce divisors $\Delta_{\gamma}$ on $\Spec R_{B^{1/p^{d_e}}}$ via \autoref{subsec.DivisorsAndFamilies} such that $\Delta_{\gamma} \geq \eta^* \Delta$ where $\eta : \Spec R_{B^{1/p^{d_e}}} \to \Spec R$ is the canonical map.  Consider the following diagram where all of these maps are labeled.
\[
\xymatrix{
\Spec R_{K^{\infty}} \ar@/_2pc/[rr]_-{\xi} \ar[r]^-{\zeta} & \Spec R_{B^{1/p^{d_e}}} \ar[r]^-{\eta} & \Spec R
}
\]
We also know that $\zeta^* \Delta_{\gamma} = \Delta_{\rho}$ by \autoref{lem.RelativeDivisorsAndBaseChange} since $\gamma$ base changes to a projection $\rho$.  Since $\Delta_{\rho} \geq \xi^* \Delta = \zeta^* \eta^* \Delta$, we see that $\zeta^* \Delta_{\gamma} \geq \zeta^* \eta^* \Delta$.  Since $\Delta$ has no vertical components neither does $\eta^* \Delta$.  Therefore because $\Delta_{\gamma} \geq 0$, we conclude that $\Delta_{\gamma} \geq \eta^* \Delta$ as desired.
\end{proof}

{ %\color{blue}
\begin{lemma}
\label{lem.HKAtPerfectionImplies}
In the setting of \autoref{set.A2Setting}, suppose that there is a surjective $R_{K^{\infty}}$-linear map
\begin{equation}
\label{eq.BaseChangeSurjectionImpliesAgain}
  R_{K^{\infty}}^{\oplus b_e} \to (R_{K^{\infty}})^{1/p^e}
\end{equation}
for some $b_e > 0$.
Then for some $d_e > 0$ and $0 \neq g \in A$, setting $B = A[1/g]$, there is a surjective $R_{B^{1/p^{e+d_e}}}$-linear map
\[
R_{B^{1/p^{e+d_e}}}^{\oplus b_e} \to
R^{1/p^e} \otimes_B B^{1/p^{e+d_e}} = R^{1/p^{e}}_{B^{1/p^{e+d_e}}}
\]
which tensors with $\otimes_{B^{1/p^{e+d_e}}} K^{\infty}$ to recover \autoref{eq.BaseChangeSurjectionImpliesAgain}.
\end{lemma}
\begin{proof}
The proof strategy is the same as before in \autoref{lem.SplitAtPerfectionImpliesSplit}.
Note that $R_{K^{\infty}}$ is a normal domain by \autoref{lem.BaseChangeIsNormalInSetting}.
%Since $a_e > 0$, we see immediately that $R_{K^{\infty}}$ is $F$-split and hence reduced.  By replacing $A$ by an $A[h^{-1}]$ we may already assume that $A \subseteq R$ is flat with geometrically reduced fibers \cite{}.
We have $(R_{K^{\infty}})^{1/p^e} \cong R^{1/p^e} \otimes_{K^{1/p^e}} K^{\infty}$ and so we can view our initial map as an $R_{K^{\infty}}$-linear map, and in particular a $K^{\infty}$-linear map
\[
\psi : R_{K^{\infty}}^{\oplus b_e}  \to (R^{1/p^e})_{K^{\infty}}.
\]
In other words, we are simply identifying relative and absolute Frobenius over a perfect field.

The images of the standard basis $e_i \in R_{K^{\infty}}^{\oplus b_e}$ form a generating set for $(R^{1/p^e})_{K^{\infty}}$ by hypothesis.  We may assume that $\psi(e_i) \in (R^{1/p^e})_{K^{1/p^{e+d_e}}}$ for some $d_e > 0$.  Now, the $\psi(e_i)$ generate $\psi( R_{K^{1/p^{e+d_e}}}^{\oplus b_e})$ as a $R_{K^{1/p^{e+d_e}}}$-module and so we have a map which we also call $\psi$
\[
\psi : R_{K^{1/p^{e+d_e}}}^{\oplus b_e}  \to (R^{1/p^e})_{K^{1/p^{e+d_e}}}.
\]
Since the faithfully flat base change of this map with $K^{\infty}$ is the other map called $\psi$, this $\psi$ is also surjective. Likewise, we also can find a denominator $g'$ and so induce a map
\[
\psi : R_{A^{1/p^{e+d_e}}[1/g']}^{\oplus b_e}  \to (R^{1/p^e})_{A^{1/p^{e+d_e}}[1/g']}.
\]
Inverting yet another element if necessary, let us assume that this map is also surjective as desired.
\end{proof}
}

%\begin{lemma}
%Suppose that $R$ is a local ring essentially of finite type over an $F$-finite field $k$ and $\Delta \geq 0$ is a divisor on $\Spec R$.  Then
%\[
%s(R_{k^{\infty}}, \Delta_{k^{\infty}}) = s(R_{k^{1/p^d}}, \Delta_{k^{1/p^d}})
%\]
%for $d \gg 0$.
%\end{lemma}

%Recall the following definition.

%\begin{definition}[Perfect point]
%A \emph{perfect point} $Q$ in/over $\Spec A$ is a ring map from $A \to k(Q)$ where $k(Q)$ is a perfect field.  Note every geometric point is a perfect point.  For any Zariski point $Q \in \Spec A$, we have an associated perfect point corresponding to the map $A \to k(Q) \subseteq k(Q)^{1/p^{\infty}}$.
%\end{definition}

\begin{theorem}
\label{thm.GeneralFiberImpliesMostSpecialFibers}
In the setting of \autoref{set.A2Setting}, further suppose that $A$ is finite type over a perfect field of characteristic $p > 0$.
If
%\[
%s(R_{K^{\infty}}) > \lambda
%\]
%then there exists an open dense set $U \in \Spec A$ such that for every $Q \in U$,%for a general closed point $Q \in \Spec A$ with residue field $k(Q)$,
%\[
%s(R_{k(Q)}) > \lambda.
%\]
%suppose $\Delta \geq 0$ is a $\bQ$-divisor on $\Spec R$.  If
\[
s(R_{K^{\infty}}, \Delta_{K^{\infty}}) > \lambda
\]
then there exists an open dense $U \subseteq \Spec A$ such that for any closed point $Q \in U$,
\[
s(R_{k(Q)}, \Delta_{k(Q)}) > \lambda.
\]
\end{theorem}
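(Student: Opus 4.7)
The approach is to (a) extract an approximate witness of the generic $F$-signature via \autoref{thm.globaldivisorfsig}, (b) spread it from $K^{\infty}$ to a Zariski neighborhood of the generic point of $\Spec A$ using \autoref{lem.SplitAtPerfectionImpliesSplit}, and (c) specialize to each closed fiber while controlling the error via the uniform convergence estimate of \autoref{prop.uniformconverginfamilydeltas}. The perfection of the base field will collapse all the extra $p^e$-th roots appearing in the residue fields, which is what makes the specialization step clean.

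Concretely, fix $\epsilon > 0$ with $s(R_{K^{\infty}}, \Delta_{K^{\infty}}) > \lambda + 3\epsilon$. Since $R_{K^{\infty}}$ is a normal $F$-finite domain (by \autoref{lem.BaseChangeIsNormalInSetting} together with the perfection of $K^{\infty}$), \autoref{thm.globaldivisorfsig} expresses this $F$-signature as $\lim_e a_e/p^{e\delta}$, where $a_e := a_e^{\lceil p^e\Delta_{K^{\infty}}\rceil}(R_{K^{\infty}}^{1/p^e})$ and $\delta := \dim R_{K^{\infty}}$. For $e$ sufficiently large, $a_e/p^{e\delta} > \lambda + 2\epsilon$, and by definition there is a surjection $R_{K^{\infty}}^{1/p^e} \twoheadrightarrow R_{K^{\infty}}^{\oplus a_e}$ whose component projections factor through $R_{K^{\infty}}(\lceil p^e\Delta_{K^{\infty}}\rceil)^{1/p^e}$; the divisor-map correspondence of \autoref{subsec.DivisorsAndFamilies} ensures each component's associated $\bQ$-divisor dominates $\xi^{*}\Delta$. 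Apply the divisor-compatible version of \autoref{lem.SplitAtPerfectionImpliesSplit} to obtain $d_e > 0$, $0 \neq g_1 \in A$, and, writing $B_1 := A[g_1^{-1}]$, a surjection
\[
R^{1/p^e}_{B_1^{1/p^{e+d_e}}} \twoheadrightarrow R_{B_1^{1/p^{e+d_e}}}^{\oplus a_e}
\]
whose component divisors dominate $\eta^{*}\Delta$ on $\Spec R_{B_1^{1/p^{d_e}}}$.

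Independently, \autoref{prop.uniformconverginfamilydeltas} furnishes $0 \neq g_2 \in A$ and $C > 0$ for which the uniform estimate
\[
\bigl|s(R_{k(Q)^{1/p^d},x}, \Delta_{Q,d}) - a_e^{\Delta_{Q,d}}(R_{k(Q)^{1/p^d},x})/p^{e\delta}\bigr| \leq C/p^e
\]
holds for all $Q \in \Spec A[g_2^{-1}]$, $d > 0$, $x$, and $e > 0$. Enlarging $e$ (and redoing the spreading step for this final $e$) so that also $C/p^e < \epsilon$, set $U := \Spec A[(g_1 g_2)^{-1}]$. For any closed $Q \in U$, the residue field $k(Q)$ is a finite extension of the perfect base field, hence perfect, so $k(Q)^{1/p^{e+d_e}} = k(Q)$. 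Base-changing the spread-out surjection via $\otimes_{B^{1/p^{e+d_e}}} k(Q)$ (and using \autoref{lem.RelativeDivisorsAndBaseChange} to track divisors) yields a surjection $R_{k(Q)}^{1/p^e} \twoheadrightarrow R_{k(Q)}^{\oplus a_e}$ whose components' divisors dominate $\Delta_{k(Q)}$; localizing at any $x \in \Spec R_{k(Q)}$ shows $a_e^{\Delta_{k(Q)}}(R_{k(Q),x}) \geq a_e$. Applying the uniform estimate with $d = e + d_e$ (so $k(Q)^{1/p^d} = k(Q)$) gives
\[
s(R_{k(Q),x}, \Delta_{k(Q)}) \geq \frac{a_e}{p^{e\delta}} - \frac{C}{p^e} > (\lambda + 2\epsilon) - \epsilon > \lambda.
\]
Finally, \autoref{thm.globaldivisorfsig} applied to the $F$-finite normal $R_{k(Q)}$ yields $s(R_{k(Q)}, \Delta_{k(Q)}) = \min_x s(R_{k(Q),x}, \Delta_{k(Q),x}) > \lambda$, completing the proof.

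The main obstacle is orchestrating the choice of $e$ (which must be large enough both to realize the lower bound at the generic point and to absorb the uniform error term) with the two denominators $g_1, g_2$ arising from the spreading and uniformity steps, and ensuring the divisor inequality $\Delta_{\rho_i} \geq \Delta$ survives both the spreading and the subsequent specialization to a closed fiber. The preservation of the divisor bound is precisely what \autoref{lem.SplitAtPerfectionImpliesSplit} and \autoref{lem.RelativeDivisorsAndBaseChange} are engineered to provide, while the perfection of $k(Q)$ is what allows the rank computation to simplify to $p^{e\delta}$ at closed fibers.
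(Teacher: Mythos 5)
Your proposal is correct and follows essentially the same route as the paper's proof: fix $\epsilon$ from the generic $F$-signature bound, choose $e$ large enough to beat the uniform constant $C$ from \autoref{prop.uniformconverginfamilydeltas}, spread the splitting out via \autoref{lem.SplitAtPerfectionImpliesSplit}, specialize to closed fibers using that $k(Q)$ is perfect, and conclude with the uniform convergence estimate and \autoref{thm.globaldivisorfsig}. Your explicit bookkeeping of the two denominators $g_1, g_2$ and the order in which $e$ is fixed is, if anything, slightly more careful than the paper's write-up.
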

\begin{proof}
%\todo{This needs to be jazzed up with divisors and perfect point crap.}
%\todo{Kevin will write some things.}
Inverting an element of $A$ if necessary, we may choose a positive constant $C$ as in \autoref{prop.uniformconverginfamilydeltas}.
By \cite{DeStefaniPolstraYaoGlobalizingFinvariants}, fix $0 < \epsilon \ll 1$ such that $s(R_{K^{\infty}, x}, \Delta_{K^{\infty}}) > \lambda + 2\epsilon$ for all $x \in \Spec R_{K^\infty}$. Pick $e \gg 0$ so that $C/p^e < \epsilon$, so that we have
\[
a_e^{\Delta_{K^{\infty}}}\big(R_{K^\infty,x}\big) \Big/ \rank_{R_{K^\infty,x}}\big(R^{1/p^e}_{K^\infty,x}\big) > \lambda + \epsilon.
\]
By \cite[Theorem 4.22]{DeStefaniPolstraYaoGlobalizingFinvariants} and by \autoref{lem.SplitAtPerfectionImpliesSplit}, after inverting an element of $A$ we may assume there is a $d \geq 0$ and a surjective $R_{A^{1/p^{e+d}}}$-linear map
\[
R_{A^{1/p^{e+d}}}^{1/p^e} \to R_{A^{1/p^{e+d}}}^{\oplus a_e} \text{, where $a_e \coloneqq a_e^{\Delta_{K_{\infty}}}(R_{K^\infty})$}
\]
satisfying the divisorial condition on projections from \autoref{lem.SplitAtPerfectionImpliesSplit}.
Applying $\blank \otimes_{A^{1/p^{e+d}}} k(Q)^{1/p^{e+d}}$ for maximal $Q \in \Spec(A)$ gives a surjection
\[
R_{k(Q)^{1/p^{e+d}}}^{1/p^e} \to R_{k(Q)^{1/p^{e+d}}}^{\oplus a_e}.
\]
of $R_{k(Q)^{1/p^{e+d}}}$-modules where still $a_e = a_e^{\Delta_{K_{\infty}}}(R_{K^\infty})$.  Note the projections corresponding to this map also have the property that their corresponding divisors are $\geq \Delta_Q \coloneqq \Delta|_{R_{k(Q)^{1/p^d}}}$ by \autoref{lem.RelativeDivisorsAndBaseChange}.

Since $A$ is finite type over a perfect field and $Q$ is maximal, $k(Q)$ is also perfect and so $k(Q)^{1/p^{e+d}} = k(Q)^{1/p^e} = k(Q)$.  It also follows that
\[
\rank_{R_{K^{\infty}}}(R_{K^{\infty}}^{1/p^e}) = \rank_{R_{k(Q)}}(R_{k(Q)}^{1/p^e})% = p^{e\delta}
\]
since $A \subseteq R$ is flat and of finite type and $A$ is $F$-finite.

Therefore we have a surjection
%, we see that $R_{k(Q)^{1/p^{e+d}}}$ is a free $R_{k(Q)^{1/p^{d}}}$-module of rank $[k(Q)^{1/p^e}:k(Q)]$ and localizing at $x \in \Spec(R_{k(Q)^{1/p^d}})$, we have a surjection
\[
(R_{{k(Q)},x})^{1/p^e} \to R_{k(Q),x}^{\oplus a_e}
\]
showing that %a_e^{\Delta_{K_{\infty}}}(R_{K^\infty})
%\[
%a_e^{\Delta_{Q}}(R_{{k(Q)},x}) > (\lambda + \epsilon)p^{e\delta}
%\]
%or
\[
\frac{a_e^{\Delta_{Q}}(R_{k(Q),x})}{\rank_{R_{k(Q),x}}(R_{k(Q),x})^{1/p^e}} > \lambda + \epsilon.
\]
Thus, it follows once again from \autoref{prop.uniformconverginfamilydeltas} that
\[
s(R_{k(Q),x}, \Delta_Q) > \lambda
\]
for all $x \in \Spec R_{k(Q)}$ as desired.
%When $A$ is finite type over an algebraically closed field, and $Q$ is a closed point, we further have that $k(Q)$ is algebraically closed and hence perfect.  Thus, the result follows after identifying $k(Q)^{1/p^d}$ and $k(Q)$.
%
%Using that $R_{k(Q)^{1/p^{e+d}}}$ is a free $R_{k(Q)^{1/p^{d}}}$-module of rank $[k(Q)^{1/p^e}:k(Q)]$ and localizing at $x \in \Spec(R_{k(Q)^{1/p^d}})$, we have a surjection
%\[
%(R_{k(Q)^{1/p^{d}},x})^{1/p^e} \to R_{k(Q)^{1/p^{d}},x}^{\oplus a_e(R_{K^\infty})[k(Q)^{1/p^e}:k(Q)]}
%\]
%showing that
%\[
%a_e(R_{k(Q)^{1/p^{d}},x}) \geq a_e(R_{K^\infty})[k(Q)^{1/p^e}:k(Q)] >(\lambda + \epsilon) p^{e\delta}[k(Q)^{1/p^e}:k(Q)]
%\]
%or
%\[
%\frac{a_e(R_{k(Q)^{1/p^{d}},x})}{\rank_{R_{k(Q)^{1/p^{d}},x}}(R_{k(Q)^{1/p^{d}},x})^{1/p^e}} > \lambda + \epsilon.
%\]
%Thus, it follows once again from \autoref{prop.uniformconverginfamily} that
%\[
%s(R_{k(Q)^{1/p^{d}},x}) > \lambda.
%\]
%When $A$ is finite type over an algebraically closed field, and $Q$ is a closed point, we further have that $k(Q)$ is algebraically closed and hence perfect.  Thus, the result follows after identifying $k(Q)^{1/p^d}$ and $k(Q)$.
\end{proof}

{%\color{blue}
\begin{theorem}
\label{thm.GeneralFiberImpliesMostSpecialFibersHK}
In the setting of \autoref{set.A2Setting}, further suppose that $A$ is finite type over a perfect field of characteristic $p > 0$.
If
%\[
%s(R_{K^{\infty}}) > \lambda
%\]
%then there exists an open dense set $U \in \Spec A$ such that for every $Q \in U$,%for a general closed point $Q \in \Spec A$ with residue field $k(Q)$,
%\[
%s(R_{k(Q)}) > \lambda.
%\]
%suppose $\Delta \geq 0$ is a $\bQ$-divisor on $\Spec R$.  If
\[
\eHK(R_{K^{\infty}}) < \lambda
\]
then there exists an open dense $U \subseteq \Spec A$ such that for any closed point $Q \in U$,
\[
\eHK(R_{k(Q)}) < \lambda.
\]
\end{theorem}
\begin{proof}
Inverting an element of $A$ if necessary, we may choose a positive constant $C$ as in \autoref{prop.uniformconverginfamily}.
By \cite{DeStefaniPolstraYaoGlobalizingFinvariants}, fix $0 < \epsilon \ll 1$ such that $\eHK(R_{K^{\infty}, x}) < \lambda + 2\epsilon$ for all $x \in \Spec R_{K^\infty}$. Pick $e \gg 0$ so that $C/p^e < \epsilon$, so that we have
\[
b_e\big(R_{K^\infty,x}\big) \Big/ \rank_{R_{K^\infty,x}}\big(R^{1/p^e}_{K^\infty,x}\big) < \lambda + \epsilon.
\]
%By \cite[Theorem 4.22]{DeStefaniPolstraYaoGlobalizingFinvariants} and
By \autoref{lem.HKAtPerfectionImplies}, after inverting an element of $A$ we may assume there is a $d \geq 0$ and a surjective $R_{A^{1/p^{e+d}}}$-linear map
\[
R_{A^{1/p^{e+d}}}^{\oplus b_e}  \to R_{A^{1/p^{e+d}}}^{1/p^e} \text{, where $b_e \coloneqq b_e(R_{K^\infty})$}.
\]
%satisfying the divisorial condition on projections from \autoref{lem.SplitAtPerfectionImpliesSplit}.
Applying $\blank \otimes_{A^{1/p^{e+d}}} k(Q)^{1/p^{e+d}}$ for maximal $Q \in \Spec(A)$ gives a surjection
\[
R_{k(Q)^{1/p^{e+d}}}^{\oplus b_e} \to R_{k(Q)^{1/p^{e+d}}}^{1/p^e}
\]
of $R_{k(Q)^{1/p^{e+d}}}$-modules. %Note the projections corresponding to this map also have the property that their corresponding divisors are $\geq \Delta_Q \coloneqq \Delta|_{R_{k(Q)^{1/p^d}}}$ by \autoref{lem.RelativeDivisorsAndBaseChange}..

Since $A$ is of finite type over a perfect field and $Q$ is maximal, $k(Q)$ is also perfect and so $k(Q)^{1/p^{e+d}} = k(Q)^{1/p^e} = k(Q)$.  It also follows that
\[
\rank_{R_{K^{\infty}}}\Big(R_{K^{\infty}}^{1/p^e}\Big) = \rank_{R_{k(Q)}}\Big(R_{k(Q)}^{1/p^e}\Big)% = p^{e\delta}
\]
since $A \subseteq R$ is flat and of finite type and $A$ is $F$-finite. Therefore we have a surjection
%, we see that $R_{k(Q)^{1/p^{e+d}}}$ is a free $R_{k(Q)^{1/p^{d}}}$-module of rank $[k(Q)^{1/p^e}:k(Q)]$ and localizing at $x \in \Spec(R_{k(Q)^{1/p^d}})$, we have a surjection
\[
 R_{k(Q),x}^{\oplus b_e} \to (R_{{k(Q)},x})^{1/p^e}
\]
showing that %a_e^{\Delta_{K_{\infty}}}(R_{K^\infty})
%\[
%a_e^{\Delta_{Q}}(R_{{k(Q)},x}) > (\lambda + \epsilon)p^{e\delta}
%\]
%or
\[
b_e(R_{k(Q),x}) \Big/ \rank_{R_{k(Q),x}}\big(R_{k(Q),x}^{1/p^e} \big) < \lambda + \epsilon.
\]
Thus, it follows once again from \autoref{prop.uniformconverginfamily} that
\[
\eHK(R_{k(Q),x}) < \lambda
\]
for all $x \in \Spec R_{k(Q)}$ as desired.
\end{proof}
}

\section{Bertini theorems for $F$-signature and Hilbert--Kunz multiplicity}

In this section, we conclude by proving our Bertini theorems for $F$-signature.  We first recall the main result of \cite{CuminoGrecoManaresiAxiomatic} and the very slight generalization to the context of pairs of \cite{SchwedeZhangBertiniTheoremsForFSings}.

Suppose $\sP$ is a local property for locally Noetherian schemes (respectively pairs $(X, \Delta \geq 0)$).
\begin{itemize}
\item [(A1)]\label{prop.A1}  Whenever $\phi : Y \to Z$ is a flat morphism with regular fibers and $Z$ (resp. $(Z, \Delta)$) is $\sP$, then $Y$ (resp. $(Y, \phi^* \Delta)$) is $\sP$ too.
\item [(A2)]\label{prop.A2}  Let $\phi : Y \to S$ be a morphism of finite type where $Y$ is excellent and $S$ is integral with generic point $\eta$.  If $Y_{\eta}$ (resp. $(Y_{\eta}, \Delta|_{Y_{\eta}}$) is geometrically $\sP$, then there exists an open neighborhood $U$ of $\eta$ in $S$ such that $Y_{s}$ (resp. $(Y_s, \Delta|_{Y_s})$) is geometrically $\sP$ for each $s \in U$.
\item [(A3)]  $\sP$ is open on schemes $X$ (resp. pairs $(X, \Delta)$) of finite type over a field.
\label{prop.A3}
\end{itemize}

\begin{theorem}\textnormal{\cite[Theorem 1]{CuminoGrecoManaresiAxiomatic}}
\label{thm.CuminoGrecoManaresi}
Let $X$ be a scheme of finite type over an algebraically closed field $k$, let $\phi : X \to \bP^n_k$ be a morphism with separable generated residue field extensions.  Suppose $X$ (resp. $(X, \Delta)$) has a property $\sP$ satisfying conditions (A1) and (A2).  Then there exists a nonempty open subscheme $U$ of $(\bP_k^n)^*$ such that $\phi^{-1}(H)$ has property $\sP$ for each hyperplane $H \in U$.
\end{theorem}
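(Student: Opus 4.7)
The plan is to execute the classical incidence-variety argument. First I form
\[
Z \,=\, \bigl\{ (x, H) \in X \times_k (\bP_k^n)^* \mid \phi(x) \in H \bigr\},
\]
with the two projections $p \colon Z \to X$ and $q \colon Z \to (\bP_k^n)^*$. The projection $p$ is a Zariski-locally trivial $\bP^{n-1}$-bundle: the fiber over $x \in X$ is the linear subspace of hyperplanes in $\bP_k^n$ containing $\phi(x)$. In particular $p$ is flat with geometrically regular fibers, so axiom (A1) applied to $p$ promotes the property $\sP$ from $X$ (resp.\ $(X,\Delta)$) to $Z$ (resp.\ $(Z, p^*\Delta)$).

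Next I apply axiom (A2) to $q$. The scheme-theoretic fiber of $q$ over a closed point $H \in (\bP_k^n)^*$ equals $\phi^{-1}(H)$, and since the residue field at any such closed point is $k = \bar k$, geometric $\sP$ of the fiber coincides with $\sP$. So any open neighborhood $U$ of the generic point $\eta \in (\bP_k^n)^*$ produced by (A2) gives the desired conclusion at closed points. What remains is to verify the hypothesis of (A2): the generic fiber $Z_\eta$, viewed as a scheme over $K = k((\bP_k^n)^*)$, is geometrically $\sP$.

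This verification is the principal obstacle. Property $\sP$ holds on the total space $Z$, but it does not automatically descend to a fiber of $q$ because $q$ itself is not smooth. My strategy is to base change the entire incidence diagram to the algebraic closure $\bar K$: the bundle $p_{\bar K} \colon Z_{\bar K} \to X_{\bar K}$ remains a $\bP^{n-1}$-bundle, and the base-change map $X_{\bar K} \to X$ is flat. Here the hypothesis on separably generated residue field extensions is essential to guarantee that the fibers of $X_{\bar K} \to X$ are geometrically regular---the kind of tensor-product pathologies that can occur in positive characteristic are exactly what this condition is designed to rule out. Two successive applications of (A1) then yield $\sP$ on $Z_{\bar K}$. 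To extract $\sP$ on the specific fiber of $q_{\bar K}$ over the $\bar K$-point coming from $\eta$, one leverages the algebraic independence over $k$ of the universal hyperplane coefficients together with the local structure of $q_{\bar K}$: the local ring of the generic fiber at a point $z$ is a localization of $\O_{Z_{\bar K},z}$. Organizing this descent uniformly for any property $\sP$ satisfying only (A1) and (A2), rather than for any specific invariant, is the delicate technical point.

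Once the generic fiber of $q$ is known to be geometrically $\sP$, axiom (A2) immediately supplies the required open dense $U \subseteq (\bP_k^n)^*$ on which every fiber is geometrically $\sP$, and since closed points of $U$ have residue field $k$, the desired Bertini conclusion follows.
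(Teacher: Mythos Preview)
The paper does not give its own proof of this theorem; it is quoted from Cumino--Greco--Manaresi and used as a black box. So there is no in-paper argument to compare against, and I will assess your outline on its own terms.

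Your incidence-variety setup and the application of (A1) along the $\bP^{n-1}$-bundle $p \colon Z \to X$ are exactly the CGM strategy, and you have correctly isolated the one genuine difficulty: showing that the generic fiber $Z_\eta$ of $q$ is \emph{geometrically} $\sP$. However, your proposed resolution of that step does not work as written. The assertion that ``the local ring of the generic fiber at a point $z$ is a localization of $\O_{Z_{\bar K},z}$'' is false: inside $Z_{\bar K} = Z \times_k \bar K$, the geometric generic fiber $Z_{\bar\eta}$ is the fiber of $q_{\bar K}$ over the \emph{closed} $\bar K$-point determined by $K \hookrightarrow \bar K$, so it is a honest closed subscheme obtained by quotienting, not by localizing. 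Knowing that $Z_{\bar K}$ satisfies $\sP$ therefore says nothing about $Z_{\bar\eta}$, and your detour through $X_{\bar K}$ and $Z_{\bar K}$ never reaches the target. You flag this as ``the delicate technical point,'' but as it stands the argument is missing, not merely delicate.

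The actual CGM argument bypasses $Z_{\bar K}$ entirely and is more direct: for $z \in Z_{\bar\eta}$ with image $x \in X$, one checks that the composite local homomorphism $\O_{X,x} \to \O_{Z_{\bar\eta},z}$ is itself flat with geometrically regular closed fiber, and then applies (A1) once to that single map. Concretely, in an affine chart the universal hyperplane equation reads $a_0 + a_1 f_1 + \cdots + a_n f_n$ with $f_i \in \O_{X,x}$; eliminating $a_0$ exhibits the relevant ring, up to localization and a further field extension, as a polynomial extension of $\O_{X,x}$. The regularity of the closed fiber of $\O_{X,x} \to \O_{Z_{\bar\eta},z}$ is exactly where the hypothesis that $k(x)/k(\phi(x))$ be separably generated enters, via Mac Lane's criterion.
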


\begin{remark}
\label{rem.A2forClosedPointsAndVeryGeneralEnough}
In the proof of \autoref{thm.CuminoGrecoManaresi}, when using (A2), $S$ is (an open subset) of $(\bP_k^n)^*$ and $\phi^{-1}(s) = Y_s$ are fibers that are exactly equal to the hyperplane sections.  In particular, one may additionally assume that $S$ is of finite type over an algebraically closed field and we only need to verify (A2) for the closed fibers.

Suppose that $k = \overline{k}$ is uncountable and consider the following weakening of (A2):
\begin{itemize}
\item[(B2)]  Let $\phi : Y \to S$ be a morphism of finite type where $S$ is integral of finite type over $k$, with generic point $\eta$.  If $Y_{\eta}$ (resp. $(Y_{\eta}, \Delta|_{Y_{\eta}}$) is geometrically $\sP$, then for a very general closed point $s \in S$ we have that $Y_{s}$ (resp. $(Y_s, \Delta|_{Y_s})$) is geometrically $\sP$ for each $s \in U$.
    \label{prop.B2}
\end{itemize}
If (A1) and (B2) hold for $\sP$, then it immediately follows that the weakening of \autoref{thm.CuminoGrecoManaresi} holds for \emph{very general} hyperplane sections.
\end{remark}

\begin{corollary}\textnormal{\cite[Corollary 2]{CuminoGrecoManaresiAxiomatic}}
\label{cor.OpenSubsetRestriction}
Let $k = \overline{k}$, $V \subseteq \bP^n_k$ be a closed subscheme (resp. and let $\Delta$ be a $\bQ$-divisor on $V$) and let $\sP$ be a local property satisfying (A1).
\begin{enumerate}
\item If $\sP$ satisfies (A2), and $V$ (resp. $(V, \Delta)$) is $\sP$, then the general hyperplane section of $V$ (resp. $(V, \Delta)$) satisfies $\sP$.
\item If $k$ is uncountable, $\sP$ satisfies (B2), and $V$ (resp. $(V, \Delta)$) is $\sP$, then the very general hyperplane section of $V$ (resp. $(V, \Delta)$) satisfies $\sP$.
\item Suppose $\sP$ satisfies (A1), (A2) and (A3), and set $\sP(V)$ to be the $\sP$ locus of $V$ then
\[
\sP(V \cap H) \supseteq \sP(V) \cap H
\]
for a general hyperplane $H$.
\end{enumerate}
\end{corollary}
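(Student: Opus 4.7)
My plan is to prove all three parts by direct appeal to the axiomatic Bertini theorem \autoref{thm.CuminoGrecoManaresi}. For parts (a) and (b), I would view $V$ as a closed subscheme of $\bP^n_k$ and apply the theorem (respectively its (B2)\nobreakdash-variant from \autoref{rem.A2forClosedPointsAndVeryGeneralEnough}) to the closed immersion $\phi\colon V \hookrightarrow \bP^n_k$. Closed immersions induce trivial extensions of residue fields (and hence separably generated ones), and the fibers of $\phi$ over $(\bP^n_k)^*$ are literally the hyperplane sections $V \cap H$. So \autoref{thm.CuminoGrecoManaresi} produces a nonempty open (resp.\ very general) locus of hyperplanes making $(V \cap H, \Delta|_{V \cap H})$ satisfy $\sP$, which is exactly the content of (a) and (b).

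For part (c), the new ingredient is the openness axiom (A3). I would set $W := \sP(V) \subseteq V$; by (A3) this is open in $V$, hence a locally closed, in particular finite type, subscheme of $\bP^n_k$. In the pair setting, endow it with $\Delta|_W$. By construction $(W, \Delta|_W)$ has $\sP$ at every point, so applying part (a) to the locally closed immersion $W \hookrightarrow \bP^n_k$ produces a general hyperplane $H$ for which $(W \cap H, \Delta|_{W\cap H})$ is $\sP$ everywhere.

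To convert this into a statement about $V\cap H$, I would use that $W$ is open in $V$: at any point $x \in W \cap H$, the local ring $\O_{V \cap H, x}$ equals $\O_{W \cap H, x}$ and the restricted divisors agree on a neighborhood of $x$. Since $\sP$ is a local property, this forces $x \in \sP(V \cap H)$, yielding the desired containment
\[
\sP(V) \cap H \;=\; W \cap H \;\subseteq\; \sP(V \cap H).
\]
I expect the only (very mild) point requiring attention will be confirming that \autoref{thm.CuminoGrecoManaresi} applies to the non-projective scheme $W$. This is fine: the hypotheses of that theorem only require a finite type scheme over $k$ equipped with a morphism to $\bP^n_k$, with no properness or surjectivity assumed, and locally closed immersions have trivially separable residue field extensions.
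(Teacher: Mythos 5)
The paper does not actually prove this corollary---it is imported verbatim from \cite{CuminoGrecoManaresiAxiomatic} as a citation---but your reconstruction is correct and is the standard derivation: parts (a) and (b) are immediate from \autoref{thm.CuminoGrecoManaresi} (and its (B2)-variant via \autoref{rem.A2forClosedPointsAndVeryGeneralEnough}) applied to the closed immersion $V \hookrightarrow \bP^n_k$, whose fibers over $(\bP^n_k)^*$ are exactly the hyperplane sections and whose residue field extensions are trivial. Your handling of (c) is also right, including the one point that needed care: $\sP(V)$ is merely locally closed in $\bP^n_k$, but \autoref{thm.CuminoGrecoManaresi} only requires a finite type $k$-scheme with a morphism to $\bP^n_k$, and the locality of $\sP$ together with the openness of $W \cap H$ in $V \cap H$ converts the conclusion back into the asserted containment $\sP(V) \cap H \subseteq \sP(V \cap H)$.
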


Combining this machinery with our work of the previous sections, we immediately obtain the main results of the paper.  We first state the result for $F$-signature.

\begin{theorem}
\label{thm.Main}
Suppose that $\psi : X \to \bP^n_k$ is a map of varieties over $k = \overline{k}$ with separably generated residue field extensions (for example, a closed embedding) and that $\Delta \geq 0$ is a $\bQ$-divisor on $X$.  Suppose that $\lambda \geq 0$.
\begin{enumerate}
\item \label{thm.Main.1}  Suppose $s(\O_{X,x}, \Delta_x) > \lambda$ for all closed points $x \in X$. Choose a general hyperplane $H \subseteq \bP^n_k$, and set $Y = \psi^{-1}(H)$.  Then
    \[
        s(\O_{Y,y}, \Delta_y|_Y) > \lambda
    \]
    for all closed points $y \in Y$.
\item \label{thm.Main.2} Suppose $\psi : X \subseteq \bP^n_k$ is a closed embedding.  Let $U_X \subseteq X$ be the subset of points $x \in X$ such that $s(\O_{X,x}, \Delta_x) > \lambda$.  For any hyperplane $H \subseteq \bP^n_k$ let $U_{H \cap X}$ denote the set of points $x \in X \cap H$ such that $s(\O_{H,x}, \Delta_x|_H) > \lambda$.  Then for $H$ a general hyperplane in $\bP^n_k$
    \[
        U_{H \cap X} \supseteq U_X \cap H.
    \]
\item \label{thm.Main.3} Suppose additionally that $k$ is uncountable, and that $s(\O_{X,x}, \Delta_x) \geq \lambda$ for all closed points $x \in X$.  Choose a very general hyperplane $H \subseteq \bP^n_k$, and set $Y = \psi^{-1}(H)$.  Then
    \[
        s(\O_{Y,y}, \Delta_y|_Y) \geq \lambda
    \]
    for all closed points $y \in Y$.
\end{enumerate}
\end{theorem}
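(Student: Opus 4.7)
The plan is to apply the axiomatic Bertini machinery of Cumino--Greco--Manaresi as packaged in \autoref{cor.OpenSubsetRestriction}, with the local property being
\[
\sP_\lambda(X,\Delta) \colon\quad s(\O_{X,x},\Delta) > \lambda \text{ for all } x \in X.
\]
Everything reduces to checking that $\sP_\lambda$ satisfies (A1), (A2), and (A3), after which the three parts of the theorem follow by direct appeal to the three conclusions of \autoref{cor.OpenSubsetRestriction}, with part (\ref{thm.Main.3}) requiring the very-general analogue.

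First, I will verify (A1). Let $\phi \colon Y \to Z$ be flat with regular fibers and assume $(Z,\Delta)$ is $\sP_\lambda$. Given a point $y \in Y$ mapping to $z \in Z$, the local map $\O_{Z,z} \to \O_{Y,y}$ is flat with regular fiber $\O_{Y,y}/\mathfrak{m}_z\O_{Y,y}$ (a localization of a regular ring), so \autoref{thm.FsignatureStableFlatMapRegularFiber} gives
\[
s(\O_{Y,y}, \phi^*\Delta) = s(\O_{Z,z},\Delta) > \lambda.
\]
Taking a minimum (which exists by \autoref{thm.globaldivisorfsig}) over all (closed) points of $Y$ then shows $(Y,\phi^*\Delta)$ is $\sP_\lambda$.

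Next, I will verify (A2). Let $\phi \colon Y \to S$ be of finite type with $Y$ excellent, $S$ integral with generic point $\eta$, and assume $(Y_\eta,\Delta|_{Y_\eta})$ is geometrically $\sP_\lambda$, meaning $s(\O_{Y_\eta\times_{k(\eta)}k(\eta)^\infty,\,x},\Delta_{k(\eta)^\infty}) > \lambda$ for all $x$. A standard reduction allows us to reduce to \autoref{set.A2Setting}: shrink $S$ so that the base is regular, the morphism has geometrically normal fibers, and $\Delta$ avoids vertical components. Then \autoref{thm.GeneralFiberImpliesMostSpecialFibers} directly supplies an open neighborhood of $\eta$ over which each closed fiber has $s(\O_{Y_s,x},\Delta|_{Y_s}) > \lambda$ at all closed points $x$; semi-continuity (\autoref{thm.globaldivisorfsig}) extends this to all points of the fiber. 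Property (A3) is immediate from \autoref{thm.globaldivisorfsig}: the locus where $s(\O_{X,x},\Delta) > \lambda$ is open.

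With (A1), (A2), (A3) in hand, parts (\ref{thm.Main.1}) and (\ref{thm.Main.2}) follow from parts (1) and (3) of \autoref{cor.OpenSubsetRestriction} applied to $\psi \colon X \to \bP^n_k$, noting \autoref{rem.A2forClosedPointsAndVeryGeneralEnough} that (A2) need only be checked for closed fibers in the Bertini argument. For part (\ref{thm.Main.3}), the strict inequality must be weakened to $\geq \lambda$; this is where I expect the main care is required. Here I will invoke the analogue of \autoref{prop.A2ForVeryGeneral} for pairs, whose proof is essentially identical: after spreading out (using \autoref{lem.SplitAtPerfectionImpliesSplit} with its divisorial refinement) the finitely many surjections witnessing $s(\O_{X_{K^\infty},x},\Delta_{K^\infty}) \geq \lambda$ up to error $1/p^e$, each is defined on an open subset of $S$ whose complement is proper; a very general closed point lies in the countable intersection. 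This is the version of (B2) needed, and combined with (A1) it yields part (\ref{thm.Main.3}) via part (2) of \autoref{cor.OpenSubsetRestriction}. The main technical obstacle is really the $> \lambda$ case: the verification of (A2) via \autoref{thm.GeneralFiberImpliesMostSpecialFigers}\autoref{thm.GeneralFiberImpliesMostSpecialFibers} has already been carried out in the previous section, so at this stage the proof is essentially a packaging exercise.
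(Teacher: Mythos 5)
Your proposal is correct and follows essentially the same route as the paper: verify (A1) via \autoref{thm.FsignatureStableFlatMapRegularFiber}, (A2) via \autoref{thm.GeneralFiberImpliesMostSpecialFibers}, (A3) via semicontinuity, and obtain the very general statement through (B2) and \autoref{prop.A2ForVeryGeneral}. The only quibble is that part \autoref{thm.Main.1} should invoke \autoref{thm.CuminoGrecoManaresi} directly rather than \autoref{cor.OpenSubsetRestriction}, since there $\psi$ is merely a map with separably generated residue field extensions and not necessarily a closed embedding.
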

\begin{proof}
For part \autoref{thm.Main.1}, we consider the condition $\sP$ that $s(\O_{X,x}, \Delta) > \lambda$.  We apply \autoref{thm.CuminoGrecoManaresi} using the fact that properties (A1) and (A2) are satisfied by \autoref{thm.FsignatureStableFlatMapRegularFiber} and \autoref{thm.GeneralFiberImpliesMostSpecialFibers} respectively (see also \autoref{rem.A2forClosedPointsAndVeryGeneralEnough}).

For part \autoref{thm.Main.2}, we simply use \autoref{cor.OpenSubsetRestriction} and use the fact that $s(\O_{X,x}, \Delta) > \lambda$ is an open condition by semicontinuity \cite{PolstraFsigSemiCont,PolstraTuckerFSingHKMultCombinedApproach} so that (A3) is satisfied.

Part \autoref{thm.Main.3} either follows from \autoref{thm.Main.1} by considering a sequence of $\lambda_i = \lambda - 1/i$ or alternately can be directly proven via \autoref{rem.A2forClosedPointsAndVeryGeneralEnough} by replacing property (A2) with (B2), which was verified in \autoref{prop.A2ForVeryGeneral}.
\end{proof}

{%\color{blue}
\begin{theorem}
\label{thm.MainHK}
Suppose that $\psi : X \to \bP^n_k$ is a map of normal varieties over $k = \overline{k}$ with separably generated residue field extensions (for example, a closed embedding).  Suppose that $\lambda \geq 1$.
\begin{enumerate}
\item \label{thm.MainHK.1}  Suppose $\eHK(\O_{X,x}) < \lambda$ for all closed points $x \in X$. Choose a general hyperplane $H \subseteq \bP^n_k$, and set $Y = \psi^{-1}(H)$.  Then
    \[
        \eHK(\O_{Y,y}) < \lambda
    \]
    for all closed points $y \in Y$.
\item \label{thm.MainHK.2} Suppose $\psi : X \subseteq \bP^n_k$ is a closed embedding.  Let $U_X \subseteq X$ be the subset of points $x \in X$ such that $\eHK(\O_{X,x}) < \lambda$.  For any hyperplane $H \subseteq \bP^n_k$ let $U_{H \cap X}$ denote the set of points $x \in X \cap H$ such that $\eHK(\O_{H,x}) < \lambda$.  Then for $H$ a general hyperplane in $\bP^n_k$
    \[
        U_{H \cap X} \supseteq U_X \cap H.
    \]
\item \label{thm.MainHK.3} Suppose additionally that $k$ is uncountable, and that $\eHK(\O_{X,x}) \leq \lambda$ for all closed points $x \in X$.  Choose a very general hyperplane $H \subseteq \bP^n_k$, and set $Y = \psi^{-1}(H)$.  Then
    \[
        \eHK(\O_{Y,y}) \leq \lambda
    \]
    for all closed points $y \in Y$.
\end{enumerate}
\end{theorem}
\begin{proof}
For part \autoref{thm.MainHK.1}, we consider the condition $\sP$ that $\eHK(\O_{X,x}) < \lambda$.  We apply \autoref{thm.CuminoGrecoManaresi} using the fact that properties (A1) and (A2) are satisfied by \autoref{thm.HKRegularFibers} and \autoref{thm.GeneralFiberImpliesMostSpecialFibersHK} respectively.

For part \autoref{thm.Main.2}, we simply use \autoref{cor.OpenSubsetRestriction} and use the fact that $\eHK(\O_{X,x}) < \lambda$ is an open condition by semicontinuity \cite{SmirnovSemiContHK,PolstraTuckerFSingHKMultCombinedApproach} so that (A3) is satisfied.

Part \autoref{thm.Main.3} follows from \autoref{thm.Main.1} by considering a sequence of $\lambda_i = \lambda - 1/i$.
\end{proof}
}

\begin{remark}
Recently, building on the techniques used in this paper, Datta and Simpson \cite[Theorem 4.1]{datta2020hilbertkunz} have shown that the normality hypothesis on $X$ in \autoref{thm.MainHK} can be weakened.
\end{remark}

\begin{remark}
We expect that the very general hypothesis in (c) above cannot be removed.  Indeed, see \cite{Monskypointquartics1998}.
\end{remark}

\bibliographystyle{skalpha}
\bibliography{MainBib}
\end{document}